\renewcommand*{\backref}[1]{}
\renewcommand*{\backrefalt}[4]{[{\tiny%
    \ifcase #1 Not cited.%
          \or Cited on page~#2.%
          \else Cited on pages #2.%
    \fi%
    }]}
\theoremstyle{definition}
 \newtheorem{theorem}{Theorem}
 \newtheorem{proposition}[equation]{Proposition}
 \newtheorem{definition}[equation]{Definition}
 \newtheorem{remark}[equation]{Remark}
 \newtheorem{lemma}[equation]{Lemma}
 \newtheorem{corollary}[equation]{Corollary}
 \newtheorem{example}[equation]{Example}
\let\origref\ref
\renewcommand*\ref[1]{%
  \origref{#1}\xlabel{#1}}
\newrobustcmd*\xlabel[1]{%
   \ifcsdef{siteref@doc@#1}{}{\csgdef{siteref@doc@#1}{,}}%
    \@bsphack%
    \begingroup
       \csxdef{siteref@doc@#1}{\csuse{siteref@doc@#1},\thepage}%
         \protected@write\@auxout{}%
        {\string\SiteRef{siteref@#1}{\csuse{siteref@doc@#1}}}%
     \endgroup
     \@esphack%
}
\newrobustcmd*\SiteRef[2]{\csgdef{#1}{#2}}
\newrobustcmd*\xref[1]{%
\ifcsundef{siteref@#1}{%
     \@latex@warning@no@line{Label `#1' not defined}
     }{%
    \begingroup
      \StrGobbleLeft{\csuse{siteref@#1}}{2}[\@tempa]\relax%
      \def\@tempb{}%
      \@tempcnta=0\relax%
      \@tempcntb=\@ne\relax%
      \def\do##1{\advance\@tempcnta\@ne}%
      \expandafter\docsvlist\expandafter{\@tempa}%
       \def\do##1{%
         \ifnum\@tempcntb=\@tempcnta\relax%
            \hyperpage{##1}%
         \else
            \hyperpage{##1},%
          \fi%
          \advance\@tempcntb\@ne
       }%
       [\expandafter\docsvlist\expandafter{\@tempa}]\xspace%
    \endgroup
   }%
}
\newcommand{\add}[1]{}
\newcommand{\m}[1]{}
\newcommand{\used}[1]{(\text{Used\  on\  pages\  \xref{#1}\!\!){\ }}}
\newcommand{\laber}[1]{\label{#1} \used{#1}}
\newcommand\RR{\mathbb R}
\newcommand\ZZ{\mathbb Z}
\newcommand\NN{\mathbb N}
\newcommand\TT{\mathcal{T}}
\newcommand\G{\Gamma}
\newcommand{\supp}{\mathrm{supp}}
\newcommand{\vv}{\bf{verified}}
\newcommand{\edge}{\mathrm{edge}}
\newcommand{\vertex}{\mathrm{vertex}}
\newcommand{\node}{\mathrm{node}}
\newcommand{\dist}{\mathrm{dist}}
\newcommand{\h}{\text{\small h}}
\newcommand{\C}{\text{C}}
\numberwithin{equation}{section}
\begin{document}

\date{\today}
\title{Sandpile solitons via smoothing of superharmonic functions}
\author[N. Kalinin]{Nikita Kalinin}\thanks{National Research University Higher School of Economics, Soyuza Pechatnikov str., 16, St. Petersburg, Russian Federation. Support from the Basic Research Program of the National Research University Higher School of Economics is gratefully acknowledged.} 
\author[M. Shkolnikov]{Mikhail Shkolnikov}\thanks{IST Austria. Klosterneuburg 3400, Am campus 1. Supported by ISTFELLOW program.}

\address{National Research University Higher School of Economics, Soyuza Pechatnikov str., 16, St. Petersburg, Russian Federation} 

\email{nikaanspb\{at\}gmail.com}

\address{IST Austria, Klosterneuburg 3400, Am campus 1.} 

\email{mikhail.shkolnikov\{at\}gmail.com}

\keywords{Sandpile model, discrete harmonic functions, solitons, smoothing}
\begin{abstract}
Let $F:\ZZ^2\to \ZZ$ be the pointwise minimum of several linear functions. The theory of {\it smoothing} of integer-valued superharmonic function allows us to prove that under certain conditions there exists the pointwise minimal superharmonic function which coincides with $F$ ``at infinity''.

We develop such a theory to prove existence of so-called {\it solitons} (or strings) in a certain sandpile model, studied by S. Caracciolo, G. Paoletti, and A. Sportiello. Thus we made a step towards understanding the phenomena of the identity in the sandpile group for a square where  solitons appear according to experiments.  We prove that sandpile states, defined using our smoothing procedure, move changeless when we send waves (that is why we call them solitons), and can interact, forming {\it triads} and {\it nodes}. 
\end{abstract}
\maketitle

\section{Introduction}

Periodic patterns in sandpiles (one of the simplest cellular automata) were studied by S. Caracciolo, G. Paoletti, and A. Sportiello in the pioneer work~\cite{firstsand}, see also Section 4.3 of~\cite{CPS} and Figure 3.1 in~\cite{book}, Figure 9a in~\cite{sadhu2011effect}. \m{history and motivation}Experimental evidence suggests that these patterns appear in many sandpile pictures and carry a number of remarkable properties: in particular, they are self-reproducing under the  action of waves. That is why we call these patterns {\it solitons} (Figure~\ref{fig_soliton}, left). 

The fact that the solitons appear as ``smoothings'' of piece-wise linear functions was predicted by T. Sadhu and D. Dhar in~\cite{sadhu2012pattern}. We provide a definition of the {\it smoothing} procedure (Definition~\ref{def_thetan}). We prove the existence (and uniqueness modulo translation) of solitons for all rational slopes and give certain estimates on their shape. We construct {\it triads} (Figure~\ref{fig_soliton}, right) --- three solitons meeting at a point --- and triads satisfy similar properties.  In addition, we accurately write the theory of sandpiles on infinite domains in the absence of references, though it is absolutely parallel to the finite case. This article contains the facts (Theorem~\ref{th_stabilfn}, Corollary~\ref{cor_wavegp}) that we need later to establish more general convergence in sandpiles, see~\cite{us}, from where we have extracted for better readability \cite{us_series} and this article.

\begin{center} 
\begin{figure}[h]
\includegraphics[width=0.2\textwidth]{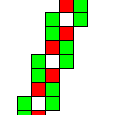}
\includegraphics[width=0.2\textwidth]{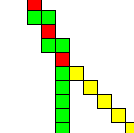}
\caption{These are local patterns for the soliton of direction $(1,3)$ and the triad made by solitons of directions $(0,-1),(1,-1),(1,2)$. White means three grains of sand, green -- two, yellow -- one, and red -- zero. }
\label{fig_soliton}
\end{figure}
\end{center} 
\m{more examples of solitones or nodes?}

\subsection{Sandpile patterns on $\ZZ^2$}
\m{definition of solitons}
A {\it state} of a sandpile is a function $\phi:\ZZ^2\to \ZZ_{\geq 0}$. We interpret $\phi(v)$ as the number of sand grains in $v\in\ZZ^2$. We can {\it topple} $v$ by sending four grains from $v$ to its neighbors, each neighbor gets one grain. If $\phi(v)\geq 4$, such a toppling is called {\it legal}. A {\it relaxation} is doing legal topplings while it is possible (for what this means for infinite graphs see Appendix~\ref{sec_locallyrelaxations}). A state $\phi$ is {\it stable} if $\phi\leq 3$ everywhere.

\begin{definition}
\label{def_waves}
Let $v\in \ZZ^2$ be such that $\phi(v)=\phi(w) = 3$ where $w$ is a neighbor of $v$. By {\it sending a wave} from $v$ we mean making a toppling at $v$, following by the relaxation. We denote the obtained state by $W_v\phi$.
\end{definition}

Note that after the first toppling the vertex $v$ has $-1$ grain, but $w$ now has $4$ grains, so it topples and $v$ has a non-negative number of grains again. 
We are interested in states which move changeless under the action of waves, such states were previously studied experimentally in~\cite{firstsand},\cite{CPS},\cite{book}.

\begin{definition}
A state $\phi$ on $\ZZ^2$ is called a {\it background} if there exists $v\in\ZZ^2$ such that $W_v\phi=\phi$. \m{it does matter which v we take. a state can contain any recurrent state encircled by 3}
\end{definition}
For example, such is the state $\phi\equiv 3$ decreased at any set of vertices with pairwise distances at least two.

\begin{definition}
Let $(p,q)\in\ZZ^2\setminus\{(0,0)\}$. A state $\phi$ is called $(p,q)$-{\it movable}, if there exists $v$ such that $W_v\phi(x,y) = \phi(x+p,y+q)$ for all $(x,y)$. A state $\phi$ is called $(p,q)$-{\it periodic} if there exist $p,q\in\ZZ$ such that $\phi(x,y)=\phi(x+p,y+q)$. A state $\phi$ is called  {\it line-shaped} if there exist constants $p,q,c_1,c_2$ such that the set $\{\phi\ne 3\}$ belongs to $\{(x,y)| c_1\leq px+qy\leq c_2\}$. 
\end{definition}

We classify all {\bf periodic line-shaped} movable states, we call them {\it solitons}. We also construct {\it triads}, i.e. three solitons meeting at a point, they are also movable. It seems to be more difficult to classify all movable states because many different backgrounds exist (see~\cite{book}, Chapter 5). We can vaguely guess the following: any $(p,q)$-movable state is equal to a soliton or triad on a background. So far we can prove the following theorem.

\begin{theorem}[See a proof in Section~\ref{proof_thmain}]
\label{th_main}
For each $p,q\in\ZZ, \mathrm{gcd}(p,q)=1$ there exists a unique (up to a translation in $\ZZ^2$) movable $(p,q)$-periodic line-shaped state. Furthermore, it is $(p',q')$-movable, where $p',q'\in\ZZ, p'q-pq'=1$.
\end{theorem}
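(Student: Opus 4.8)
\emph{Setup.} The plan is to work through the dictionary between stable states and integer-valued superharmonic functions: writing $\Delta$ for the discrete Laplacian on $\ZZ^2$, a state with $\phi\le 3$ is $\phi=3+\Delta H$ for an integer-valued superharmonic $H$, unique up to a discrete harmonic (hence, when $\Delta H$ has strip support, an affine) function. The observation that drives everything is that $\phi\ge 0$, i.e. $\Delta H\ge -3$, holds exactly when $H$ is a pointwise minimal superharmonic function among those with the same values near infinity, because lowering $H$ at $v$ preserves superharmonicity iff $\Delta H(v)\le -4$. Thus the smoothing $\theta(F)$ of Definition~\ref{def_thetan}, being the pointwise minimal superharmonic function coinciding with $F$ at infinity, automatically produces a legal stable state $3+\Delta\theta(F)$. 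I will also use the following facts about $\theta$, all part of the smoothing theory or easy consequences of ``a pointwise minimum of two superharmonic functions is superharmonic'': translation covariance $\theta(\sigma_aF)=\sigma_a\theta(F)$ where $(\sigma_a\phi)(x,y):=\phi(x+a_1,y+a_2)$, the identity $\theta(F+1)=\theta(F)+1$, integrality, monotonicity $F_1\le F_2\Rightarrow\theta(F_1)\le\theta(F_2)$, $\theta(F)\le F$ when $F$ is itself superharmonic, and the fact that $\theta(F)-F$ is supported near the creases of $F$; together with the wave calculus of Theorem~\ref{th_stabilfn} and Corollary~\ref{cor_wavegp}.

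\emph{Existence.} Given $\gcd(p,q)=1$, pick by B\'ezout integers $p',q'$ with $p'q-pq'=1$, set $L(x,y):=qx-py$, so $L(p,q)=0$ and $L(p',q')=1$, and define $\Phi:=3+\Delta\theta(F)$ with $F:=\min(0,L)$. As a minimum of two affine functions $F$ is superharmonic, so $\theta(F)$ is defined and $\Phi$ is a legal stable state by the Setup. It is line-shaped because $\theta(F)-F$ is supported within bounded distance of the crease $\{L=0\}$, so $\Phi=3$ outside a strip $\{c_1\le L\le c_2\}$. It is $(p,q)$-periodic because $\sigma_{(p,q)}F=\min(0,L+L(p,q))=F$ forces $\sigma_{(p,q)}\theta(F)=\theta(F)$ by uniqueness, hence $\sigma_{(p,q)}\Phi=\Phi$. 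Finally $\Phi\not\equiv 3$: otherwise $\theta(F)\le F\le 0$ would be harmonic and attain the value $0$, hence $\equiv 0$ by the maximum principle, contradicting $\theta(F)=F=L$ near infinity on the side $\{L\ll 0\}$.

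\emph{Movability.} Since $\sigma_{(p',q')}F=\min(0,L+1)$, translation covariance gives $\sigma_{(p',q')}\theta(F)=\theta(\min(0,L+1))$. From the pointwise inequalities $\min(0,L)\le\min(0,L+1)\le\min(0,L)+1$, monotonicity of $\theta$ and $\theta(F+1)=\theta(F)+1$ give $0\le\sigma_{(p',q')}\theta(F)-\theta(F)\le 1$, so by integrality $G:=\sigma_{(p',q')}\theta(F)-\theta(F)$ equals $\mathbf{1}_{S}$ for the $(p,q)$-periodic set $S:=\{G=1\}$, which contains $\{L\ll 0\}$, misses $\{L\gg 0\}$, and has strip-shaped complement-boundary. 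Hence $\sigma_{(p',q')}\Phi=\Phi+\Delta\mathbf{1}_{S}$. The remaining, and hardest, step is to realise $\Phi\mapsto\Phi+\Delta\mathbf{1}_{S}$ as one wave: choose $v\in S$ on the side of $S$ facing $\{L\gg 0\}$ with $\Phi(v)=3$ and a neighbour $w\notin S$ with $\Phi(w)=3$ (available from the strip shape of $S$ and the profile of $\Phi$), and show, using the relaxation and wave analysis of Theorem~\ref{th_stabilfn} and Corollary~\ref{cor_wavegp}, that $W_v\Phi$ topples each vertex of $S$ exactly once and no others: since $\Phi+\Delta\mathbf{1}_{S}$ is already stable and nonnegative, no vertex of $S$ is forced to topple twice and no vertex of $S^c$ is forced to topple, while the excess created at $v$ reaches all of $S$ by connectivity. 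This gives $W_v\Phi=\sigma_{(p',q')}\Phi$, i.e. $\Phi$ is $(p',q')$-movable with $p'q-pq'=1$. Converting this bookkeeping identity into the honest dynamical statement on the infinite graph $\ZZ^2$, while controlling the relaxation so that no vertex topples twice, is where the main difficulty lies.

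\emph{Uniqueness.} Let $\psi$ be any movable $(p,q)$-periodic line-shaped state. A nontrivial such state has its strip in direction $(p,q)$: its normal must be orthogonal to $(p,q)$, for otherwise the intersection of all $(p,q)$-translates of the strip is empty. Writing $\psi=3+\Delta H$ with $H$ superharmonic and $\Delta H$ supported in $\{c_1\le L\le c_2\}$, the function $H$ is eventually affine on each side, $H\sim\ell_\pm$ on $\{\pm L\gg 0\}$ respectively, and $(p,q)$-periodicity forces $\ell_+(p,q)=\ell_-(p,q)$; after subtracting $\ell_+$ (which changes neither $\Delta H$ nor $\psi$) and, if necessary, replacing $L$ by $-L$ (harmless, since $\min(0,mL+c)$ and $\min(0,-mL-c)$ differ by an affine function and so have the same smoothing-state), $H$ coincides at infinity with $\min(0,mL+c)$ for integers $m\ge 1$ (with $m\ne 0$ by nontriviality) and $c$. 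Since $\psi\ge 0$ makes $H$ pointwise minimal, $H=\theta(\min(0,mL+c))$ by the uniqueness part of the smoothing theory, so $\psi=3+\Delta\theta(\min(0,mL+c))$. Now invoke movability: if $W_v\psi=\sigma_{(a_1,a_2)}\psi=\psi+\Delta\mathbf{1}_{S_v}$ with $S_v$ the toppling set of the wave, then $\sigma_{(a_1,a_2)}H-H=\mathbf{1}_{S_v}$ up to a constant, and comparing the asymptotics of the two sides of the strip ($0$ on one side, $m\,L(a_1,a_2)$ on the other) forces $m\,L(a_1,a_2)=1$, hence $m=1$ and $L(a_1,a_2)=1$. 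Therefore $\min(0,L+c)=\sigma_a\min(0,L)$ for some $a\in\ZZ^2$ with $L(a)=c$ (such $a$ exists as $\gcd(p,q)=1$ makes $L$ surjective onto $\ZZ$), whence $\psi=\sigma_a\Phi$. This proves uniqueness up to translation, and $L(a_1,a_2)=1$ gives the ``Furthermore'' assertion.
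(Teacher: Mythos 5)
Your existence and movability arguments are essentially sound and follow the paper's route: $\Phi=3+\Delta\theta_{\min(0,L)}$ is a nonnegative stable state by Remark~\ref{rem_theta}, line-shaped and $(p,q)$-periodic as you say, and the step you flag as ``the hardest'' is exactly the content of Corollary~\ref{cor_wavegp}: for $v$ far from $D(\theta_F)$ in the region $\{F=0\}$ one gets $W_v\Phi=3+\Delta\theta_{\min(1,L)}$, and $\min(1,L)$ is the pullback of $\min(0,L)$ under translation by a vector $e$ with $L(e)=1$, so $W_v\Phi$ is a translate of $\Phi$; there is no need to re-run the relaxation analysis by hand.

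The genuine gap is in the uniqueness part, and it sits in the very first sentence of your Setup: the claimed equivalence ``$\phi\ge 0$, i.e.\ $\Delta H\ge -3$, holds exactly when $H$ is pointwise minimal'' is false. Minimality implies $\Delta H\ge-3$ (Remark~\ref{rem_theta}), but not conversely: lowering at a single point is not the only competitor, one may subtract the indicator of a whole set. Concretely, $H=\min(0,2y)$ is integer-valued and superharmonic with $\Delta H\ge -2$, so $3+\Delta H\ge 0$, yet $H-\mathbf{1}_{\{y=0\}}$ is again superharmonic and coincides with $H$ away from the crease, so $H$ is not minimal among functions with its behaviour at infinity. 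Hence your key step ``Since $\psi\ge 0$ makes $H$ pointwise minimal, $H=\theta(\min(0,mL+c))$'' is unjustified; nothing in your argument rules out that a movable state is represented by a non-minimal superharmonic function, because you invoke movability only afterwards, to compute $m$ and the shift. Identifying an arbitrary movable periodic line-shaped state with the canonical smoothing is precisely the hard content, and the paper gets it dynamically from movability: it sends $n$ waves, shows there are no topplings far on one side, bounds the wave toppling function by $\min(n,p'x+q'y+r)$ via the Least Action Principle, passes to the cylinder, and uses the integrality and growth estimates (Lemma~\ref{lemma_stripestimate} together with Lemma~\ref{lemma_edge} and the $\gcd$ argument) to force the toppling function to be eventually linear, whence $\psi=3+\Delta\theta_{\min(p'x+q'y,0)}$. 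Two further unproven assertions in your uniqueness would also need repair even granting minimality: the existence of an integer-valued $H$ on all of $\ZZ^2$ with $\Delta H=\psi-3$ and prescribed asymptotics, and the claim that such an $H$ is ``eventually affine on each side'' (a harmonic function on a half-cylinder need not be affine without a growth bound); the paper sidesteps both by working with the toppling function of the waves rather than a postulated static potential.
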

Moreover, a movable $(p,q)$-periodic line-shaped state is always $(\frac{p}{\mathrm{gcd}(p,q)},\frac{q}{\mathrm{gcd}(p,q)})$-periodic.

\subsection{Superharmonic functions}

\begin{definition}
The {\it toppling function} of a relaxation is the function $\ZZ^2\to \ZZ_{\geq 0}$ counting the number of topplings at every point during this relaxation.
\end{definition}

It is known that the toppling function has bounded Laplacian and is minimal in a certain class of functions. Therefore, if we know the toppling function ``at infinity'', we can, in principle, reconstruct it. When we send $n$ waves towards a periodic movable line-shaped state, the toppling function is zero on one side of the set $\{\phi\ne 3\}$ and is equal to $n$ on another side. It is easy to guess (or find experimentally) that the toppling function in this case will be something like $F(x,y)=\min (px+qy, n)$ on one side of the set $\{\phi\ne 3\}$.  Hence we are looking for a point-wise minimal superharmonic integer-valued function which coincide with $F(x,y)$ at infinity, {\it a priopri} a pointwise minimum in such a class of functions can be $-\infty$ everywhere. We develop a theory of {\it smoothings} to prove that the pointwise minimum is reached by slicing from $F$ characteristic functions of certain sets, see Section~\ref{sec_propertiessmooth}. We also prove that a kind of monotonicity is preserved by doing this slicing, Section~\ref{sec_mono}. Using this we prove the existence of solitons. \m{some informal speculation explaining the main technical tools}

To study the interaction between solitons, when several of them meet at a point, and for the needs of~\cite{us}, we also study {\it triads} and {\it nodes}.

\subsection{Sandpiles on infinite domains}
We could not find a satisfactory reference containing the theory of sandpiles on infinite domains (in particular, the Least Action Principle for waves). We hesitated about its inclusion here, because all the statements can be proven exactly in the same way as in the finite case. Finally we decided to present the theory of locally-finite relaxations in Appendix~\ref{sec_locallyrelaxations} where we define and study locally-finite relaxations.

\subsection{Acknowledgments}

We thank Andrea Sportiello for sharing his insights on perturbative
regimes of the Abelian sandpile model which was the starting
point of our work. We also thank Grigory Mikhalkin, who encouraged us to approach this problem. 

Also we thank Misha Khristoforov and Sergey
Lanzat who participated on the initial state of this project, when we
had nothing except the computer simulation and pictures. Ilia Zharkov,
Ilia Itenberg, Kristin Shaw, Max Karev, Lionel Levine, Ernesto Lupercio, Pavol \v Severa, Yulieth Prieto, Michael Polyak, Danila Cherkashin asked us a lot of questions and listened to us; not all of these questions found their answers here, but
we are going to treat them in subsequent papers.

%

\section{Smoothing, its relation to waves}

The discrete Laplacian $\Delta$ of a function $F:\ZZ^2\to\RR$ is defined as $$\Delta F(x,y) = -4F(x,y)+F(x+1,y)+F(x-1,y)+F(x,y+1)+F(x,y-1).$$ A function $F$ is called {\it harmonic} (resp., {\it superharmonic}) on $A\subset \ZZ^2$ if $\Delta F=0$ (resp., $\Delta F\leq 0$) at every point in $A$.
\begin{lemma}
\laber{lem_minharmonic}
If $F,G$ are two superharmonic functions on $A\subset\ZZ^2$, then $\min(F,G)$ is a superharmonic function on $A$.
\end{lemma}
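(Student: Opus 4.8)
The plan is to argue pointwise at an arbitrary $v = (x,y) \in A$. Fix such a $v$ and, without loss of generality, assume $\min(F,G)(v) = F(v)$, i.e.\ $F(v) \le G(v)$. I want to show $\Delta \min(F,G)(v) \le 0$, that is, the sum of the values of $H := \min(F,G)$ over the four neighbors of $v$ is at most $4 H(v) = 4 F(v)$. The key observation is that at \emph{every} vertex $w$ (neighbor or not) we have $H(w) \le F(w)$, simply because $H = \min(F,G)$. Therefore
\[
\sum_{w \sim v} H(w) \;\le\; \sum_{w \sim v} F(w) \;=\; \Delta F(v) + 4 F(v) \;\le\; 4 F(v),
\]
where the last inequality uses that $F$ is superharmonic on $A$ and $v \in A$, so $\Delta F(v) \le 0$. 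Since $H(v) = F(v)$, this rearranges to $\Delta H(v) \le 0$, as desired.

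The symmetric case $G(v) \le F(v)$ is handled identically with the roles of $F$ and $G$ exchanged, using $H(w) \le G(w)$ for all $w$ and superharmonicity of $G$. Since $v \in A$ was arbitrary, $\min(F,G)$ is superharmonic on $A$.

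There is essentially no obstacle here: the only thing to be slightly careful about is that the bound $H(w) \le F(w)$ must hold at the neighbors $w$ of $v$ even when those neighbors lie outside $A$ — but this is automatic from the definition of the pointwise minimum and does not require $F$ or $G$ to be superharmonic there. The argument is purely local and uses nothing beyond the definition of $\Delta$ and the sign convention for superharmonicity fixed just above the statement. The same one-line computation will, of course, extend to the minimum of any finite collection of superharmonic functions, which is the form actually used later when $F$ is a minimum of several linear functions.
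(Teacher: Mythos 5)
Your proof is correct and is essentially the same argument as the paper's: at a point $v$ with $F(v)\leq G(v)$, use $\min(F,G)\leq F$ at the neighbors and equality at $v$ to get $\Delta\min(F,G)(v)\leq\Delta F(v)\leq 0$. You have merely written out the one-line computation in more detail; no further changes are needed.
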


\begin{proof}
Let $v\in A$. Without loss of generality, $F(v)\leq G(v)$. Then, $\Delta \min(F,G)(v)\leq \Delta F(v)\leq 0.$
\end{proof}

\begin{definition} \m{notation}
The {\it deviation set} $D(F)$ of a function $F$ is the set of points where $F$ is not harmonic, i.e. 
$$D(F) =\{(x,y)\in\ZZ^2| \Delta F(x,y)\ne 0\}.$$
\end{definition}

For $A\subset \ZZ^2,\C>0$, we denote by $B_\C(A)\subset \ZZ^2$ the set of points whose distance to $A$ is at most $\C$.

\begin{definition}
\laber{def_thetan}
For $n\in\mathbb{N}$ and a superharmonic function $F:\ZZ^2\to\ZZ$ we define 
$$\Theta_n(F)=\{G:\ZZ^2\to \ZZ| \Delta G \leq 0, F-n\leq G\leq F, \exists \C>0, \{F\ne G\}\subset B_{\C}(D(F))\}.$$

In plain words, $\Theta_n(F)$ is the set of all integer-valued superharmonic functions $G\leq F$, coinciding with $F$ outside a finite neighborhood of $D(F)$, whose difference with $F$ is at most $n$. 
 Define $S_n(F):\ZZ^2\to\ZZ$ to be $$S_n(F)(v)=\min\{G(v)|{G\in\Theta_n(F)}\}.$$ We call $S_n(F)$ {\it the $n$-smoothing of $F$}. Note that $S_n(F)\geq F-n$. 
\end{definition}
\m{smoothing}

\begin{lemma}
\label{lem_notevident} If $F\leq F'$ then $S_n(F)\leq S_n(F')$ for each $n\in\ZZ_{\geq 0}$. \m{not quite evident} 
\end{lemma}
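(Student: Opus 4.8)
The plan is to show that $S_n(F)$, which a priori is defined only pointwise as an infimum over $\Theta_n(F)$, is itself an element of $\Theta_n(F)$, and then to exploit monotonicity of the defining class in $F$. First I would observe that if $G \in \Theta_n(F)$ and $G' \in \Theta_n(F')$ with $F \le F'$, there is no reason for $\min(G, G')$ to lie in either class, so a naive "take the minimum of the two witnesses" argument does not immediately work — this is why the lemma is flagged as "not quite evident". Instead, the key point is that $\Theta_n(F)$ is closed under pointwise minima of \emph{finite} (indeed arbitrary nonempty) subfamilies: if $G_1, G_2 \in \Theta_n(F)$ then $\Delta \min(G_1,G_2) \le 0$ by Lemma~\ref{lem_minharmonic}, clearly $F - n \le \min(G_1,G_2) \le F$, and $\{\min(G_1,G_2) \ne F\} \subset \{G_1 \ne F\} \cup \{G_2 \ne F\}$ is still contained in a finite neighborhood of $D(F)$. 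Since all $G \in \Theta_n(F)$ agree with $F$ (hence with each other) outside a finite set, the pointwise infimum $S_n(F)$ is actually realized, on each vertex, by a minimum over finitely many of the $G$'s that differ from $F$ near that vertex; more carefully, one argues that $S_n(F)$ itself has $\{S_n(F) \ne F\}$ contained in the union of the relevant neighborhoods, and this union is finite because $D(F)$ is finite (as $F$ is a superharmonic function that is a minimum of linear functions, or at any rate because the excerpt's setup guarantees finiteness of the relevant deviation sets). Thus $S_n(F) \in \Theta_n(F)$, and in particular $\Delta S_n(F) \le 0$.

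Having established $S_n(F) \in \Theta_n(F)$, I would next compare $\Theta_n(F)$ and $\Theta_n(F')$. The natural candidate for an element of $\Theta_n(F')$ built from $S_n(F)$ is $\min(S_n(F), F')$. I would check the three defining conditions: superharmonicity of $\min(S_n(F), F')$ follows from Lemma~\ref{lem_minharmonic} since both $S_n(F)$ and $F'$ are superharmonic; the sandwich condition $F' - n \le \min(S_n(F), F') \le F'$ holds because $\min(S_n(F),F') \le F'$ trivially and because $S_n(F) \ge F - n$ together with $F' \le F + (F' - F)$... here one must be slightly careful: we need $\min(S_n(F), F') \ge F' - n$, i.e. both $S_n(F) \ge F' - n$ and $F' \ge F' - n$; the second is obvious, and the first would follow from $S_n(F) \ge F - n$ only if $F \ge F'$, which is the wrong direction. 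So the real content is here.

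Consequently the heart of the argument is to show $S_n(F) \ge F' - n$ given $F \le F'$, or alternatively to find a different witness. The cleanest route I expect to work: instead of pushing $S_n(F)$ up to $F'$, push a witness for $F'$ down. Take any $G' \in \Theta_n(F')$; I claim $\min(G', F) \in \Theta_n(F)$. Superharmonicity is Lemma~\ref{lem_minharmonic}; $\min(G',F) \le F$ is trivial; $\min(G',F) \ge F - n$ needs both $G' \ge F - n$ and $F \ge F - n$, and $G' \ge F' - n \ge F - n$ since $F \le F'$ — so this works. Finally $\{\min(G',F) \ne F\} \subset \{G' \ne F'\} \cup \{F \ne F'\}$; the first set is a finite neighborhood of $D(F')$, but the set $\{F \ne F'\}$ need not be finite in general. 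This is the main obstacle, and it tells me the lemma is really being used in a context where $F, F'$ differ only near their (finite) deviation sets, or where one restricts attention to such pairs — so I would either add that hypothesis explicitly, or, more likely matching the paper's intent, prove the cleaner statement that the infimum defining $S_n$ is monotone by a direct comparison on the (finite) region where things differ, arguing vertex by vertex that any configuration realizing $S_n(F')$ can be modified below $F$ without increasing values and without violating superharmonicity, using that topplings/minima only propagate information inward from $D(F') \supseteq$ the relevant region. The upshot: once $\min(G',F) \in \Theta_n(F)$ is justified (under the operative finiteness hypothesis), we get $S_n(F) \le \min(G',F) \le G'$ for every $G' \in \Theta_n(F')$, hence $S_n(F) \le S_n(F')$, which is the claim.
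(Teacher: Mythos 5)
Your proposal follows the same general strategy as the paper (take a witness $G'\in\Theta_n(F')$ lying below $S_n(F)$ somewhere and push it down by a pointwise minimum -- the paper uses $\min(S_n(F),G')$, you use $\min(G',F)$; superharmonicity via Lemma~\ref{lem_minharmonic} and the bound $G'\geq F'-n\geq F-n$ are handled identically), but it stops exactly at the point that is the actual content of the lemma: you never verify the locality condition that the set where your candidate differs from $F$ is contained in some $B_\C(D(F))$. You acknowledge this (``this is the main obstacle'') and then propose either to add the hypothesis that $F$ and $F'$ differ only near their deviation sets, or to argue ``vertex by vertex'' in a way you do not spell out. Neither option proves the stated lemma: it carries no such hypothesis, and where it is used (e.g.\ in Corollary~\ref{cor_wavegp}) the functions $F$ and $F'$ differ on an unbounded region. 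A side error in your first paragraph: $D(F)$ is \emph{not} finite for the functions of interest ($\Psi_\edge$, $\Psi_\vertex$, $\Psi_\node$ have infinite deviation sets); the membership condition in $\Theta_n(F)$ only asks for containment in a bounded-radius neighborhood $B_\C(D(F))$ of a possibly infinite set, and your claim that $S_n(F)\in\Theta_n(F)$ is likewise not justified at this stage (in the paper it comes later, from holelessness).

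The missing step is closed in the paper by a descent argument, which is what you would need to supply. Suppose $G'(v)<S_n(F)(v)\leq F(v)$ at a point $v$ whose distance to $D(F)$ exceeds $\C+n+1$, where $\C$ is the constant attached to $G'$. Since $G'(v)<F(v)\leq F'(v)$, the point $v$ lies within distance $\C$ of $D(F')$, and $G'\geq F'-n$ gives $(F'-F)(v)\leq n$. Near $v$ the function $F$ is harmonic, so $F'-F$ is a nonnegative integer-valued superharmonic function there; at the nearby point of $D(F')$ its Laplacian is strictly negative, which produces a neighbor where $F'-F$ strictly drops, and superharmonicity then forces a further strict drop at each subsequent step. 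This builds a chain of length at least $n+1$ staying outside $D(F)$ (the distance assumption guarantees this) along which the integer $F'-F$ decreases strictly from a value at most $n$, hence becomes negative, contradicting $F\leq F'$. Therefore $\{G'<S_n(F)\}\subset B_{\C+n+1}(D(F))$, so the minimum of $G'$ with an element of $\Theta_n(F)$ does lie in $\Theta_n(F)$ and is smaller than $S_n(F)$ at $v$, the desired contradiction. Without this (or an equivalent) argument, your proof does not go through.
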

\begin{proof}
Suppose that there exist $v\in\ZZ^2$ and $G'\in \Theta_n(F')$ such that $G'(v)< S_n(F)(v)$. The set $\{G'\ne F'\}$ belongs to a certain $\C$-neighborhood of $D(F')$. Consider the function $G=\min(S_n(F),G')$. Clearly, $\Delta G\leq 0$, $F-n\leq 
G\leq F$. We will prove that the set $\{S_n(F)> G'\}$ belongs to a $\C+n+1$ neighborhood of $D(F)$ and thus $G\in \Theta_n(F)$ and $G(v)<S_n(F)(v)$ which is a contradiction.

Suppose that $G'(v)< S_n(F)(v)$ and the distance between $v$ and $D(F)$ is at least $\C+n+1$. There is a path $v=v_0,v_1,\dots, v_k$ of length at most $\C$ from $v$ to some $v_k\in D(F')$ and $F$ is harmonic near this path. Then, at $v'$ we have $\Delta (F'-F)<0$ and, for some $i\leq k$, we have that $n\geq (F'-F)(v_0)=(F'-F)(v_1)=\dots =(F'-F)(v_i)>(F'-F)(v')$ where $v'$ is a neighbor of $v_i$. Therefore $v'$ has a neighbor $v'_1$ such that $(F'-F)(v')>(F'-F)(v'_1)$, etc, therefore we can construct a path $v'_1,v'_2,\dots$ of length at least $n+1$ in $\ZZ^2\setminus D(F)$, and we have that  $(F'-F)(v'_{n+1})< 0$ which is a contradiction.
\end{proof}

Let us fix $p_1,p_2,q_1,q_2,c_1,c_2 \in \ZZ$ such that $p_1q_2-p_2q_1=1$. Consider the following functions on $\ZZ^2$:  
\begin{equation}\label{eq_edge}
\Psi_{\edge}(x,y)=\min(0,p_1x+q_1y),
\end{equation}
\begin{equation}\label{eq_tripode}
\Psi_{\vertex}(x,y)=\min(0,p_1x+q_1y,p_2x+q_2y+c_1),
\end{equation}
\begin{equation}\label{eq_node}
\Psi_{\node}(x,y)=\min\Big(0, p_1x+q_1y,p_2x+q_2y+c_1, (p_1+p_2)x+(q_1+q_2)y+c_2 \Big).
\end{equation}

Our main technical result is the following theorem.

\begin{theorem}\label{th_stabilfn}
Let $F$ be a) $\Psi_{\edge}$, b)$\Psi_{\vertex}$, or c)$\Psi_{\node}$. The sequence of $n$-smoothings $S_n(F)$ of $F$ stabilizes eventually as $n\to\infty$, i.e. there exists $N>0$ such that $S_n(F)\equiv S_N(F)$ for all $n>N$. 
\end{theorem}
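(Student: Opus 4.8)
The plan is to show that the sequence $S_n(F)$ is non-increasing in $n$ (obvious, since $\Theta_n(F)\subset\Theta_{n+1}(F)$) and uniformly bounded below, which forces eventual stabilization pointwise; the real content is to upgrade pointwise stabilization to \emph{global} stabilization, i.e. a single $N$ that works everywhere. For the lower bound I would exploit the piecewise-linear structure of $F$: in each case a), b), c), the function $F$ is the minimum of finitely many integer linear functions $\ell_i$, and $D(F)$ lies in a bounded neighborhood of the union of the ``creases'' $\{\ell_i=\ell_j\}$. Any $G\in\Theta_n(F)$ is superharmonic, equals $F$ (hence equals some single $\ell_i$) far out in each of the finitely many linearity sectors, and differs from $F$ only in a finite set. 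I would argue that such a $G$ is bounded below by a fixed superharmonic ``model'' function independent of $n$ — concretely, the superharmonic function obtained by smoothing each crease locally by a universal amount — using Lemma \ref{lem_minharmonic} and the maximum principle to propagate the sector values inward. This gives $S_n(F)\geq H$ for some fixed $H$ and all $n$, so $S_n(F)$ decreases to a limit $S_\infty(F)$.

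Next I would show $S_\infty(F)$ is itself attained, i.e. $S_\infty(F)=S_{N}(F)$ for some finite $N$. The key is a uniform bound on the support: I claim there is $\C_0$, depending only on the data $p_i,q_i,c_i$, such that $\{S_n(F)\neq F\}\subset B_{\C_0}(D(F))$ for all $n$. Granting this, the values $S_n(F)$ live in a fixed finite window ($F-\!$(bounded) $\leq S_n(F)\leq F$) on a fixed finite set $B_{\C_0}(D(F))$ and agree with $F$ off it, so there are only finitely many possible functions $S_n(F)$; a non-increasing sequence taking finitely many values stabilizes, giving the desired $N$. To prove the uniform support bound I would adapt the path argument from the proof of Lemma \ref{lem_notevident}: if $S_n(F)<F$ at a point $v$ far from $D(F)$, then since $F$ is harmonic near $v$ one can chase a strictly decreasing chain of values of $F-S_n(F)$ outward; because $S_n(F)\geq F-n$ and, crucially, $F-S_n(F)$ is bounded independently of $n$ by the previous paragraph, this chain has bounded length, contradicting $v$ being too far from $D(F)$.

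The main obstacle I anticipate is the uniform lower bound / uniform support bound — i.e. producing the fixed model superharmonic function $H\leq S_n(F)$ whose gap from $F$ does not grow with $n$. Monotonicity alone (Lemma \ref{lem_notevident}) does not obviously prevent the smoothing from ``digging'' an ever-deeper well as $n$ increases; one must use that $F$ is a minimum of linear functions with integer slopes summing compatibly (the condition $p_1q_2-p_2q_1=1$ and the specific combinations in \eqref{eq_edge}--\eqref{eq_node}) so that the local crease geometry admits a finite superharmonic ``repair'' of depth independent of $n$. I would handle a) first, where there is a single crease along the line $p_1x+q_1y=0$ and translation-invariance in the crease direction reduces the problem to a one-dimensional obstacle computation; then b) and c) follow by localizing near each crease and near the triple/node point, using Lemma \ref{lem_minharmonic} to glue the local models, and invoking Lemma \ref{lem_notevident} to compare $F$ with its pieces. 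Once the uniform gap is in hand, everything else is the soft finiteness argument sketched above.
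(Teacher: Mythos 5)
Your soft framework (monotone decreasing plus a uniform depth bound forces stabilization) is fine, and in fact simpler than you make it: once one knows $F-S_n(F)\leq \C_0$ uniformly in $n$, Corollary~\ref{cor_1smoothing} gives stabilization at $N=\C_0$ directly, and your ``finitely many possible functions'' count is not needed — as written it is also incorrect, since $B_{\C_0}(D(F))$ is infinite ($D(F)$ contains whole lines or rays in all three cases). The genuine gap is the step you yourself flag as the main obstacle: the uniform lower barrier $H\leq S_n(F)$ independent of $n$. That bound \emph{is} the content of the theorem, and the sketch you offer does not produce it. Constructing a superharmonic ``model'' function that repairs each crease by a universal amount only exhibits an element of some $\Theta_m(F)$, hence gives an \emph{upper} bound $S_m(F)\leq H$; for a lower bound you must show that \emph{every} $G\in\Theta_n(F)$ stays above $H$, and neither Lemma~\ref{lem_minharmonic} nor a maximum-principle propagation of sector values accomplishes this: $F-G$ is subharmonic only off $D(F)$, so the maximum principle controls it only away from the creases (this is exactly Lemma~\ref{lem_finiteneigh}), while the whole issue is how deep $G$ can dig inside a bounded neighborhood of $D(F)$ as $n\to\infty$. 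Likewise the ``one-dimensional obstacle computation'' you invoke for case a) is essentially an assertion that the soliton exists, i.e.\ the statement being proven.

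The paper closes this gap by a global contradiction argument of a different nature. Assume the smoothings never stabilize; then by Corollary~\ref{cor_1smoothing} the depth equals $n$ at some persistent point, and the monotonicity theory of Section~\ref{sec_mono} forces the support of $F-S_n(F)$ to spread at least linearly in $n$. On the other hand the total Laplacian mass is conserved (Lemma~\ref{lemma_nabla}), so the deviation set of $S_n(F)$ has boundedly many points in case a) (after descending to the cylinder by periodicity, Lemma~\ref{lemma_periodicsmoothongs}) and only linearly many in cases b), c) (Lemma~\ref{lem_linnumberofcolored}), while the region swept out has area of order $n^2$. Hence there is a large patch on which $S_n(F)$ is harmonic with linearly bounded growth, and the rigidity of integer-valued discrete harmonic functions (Lemmata~\ref{lemma_harmonic} and~\ref{lemma_poisson}, based on~\cite{duffin}) forces $S_n(F)$ to be linear on a $k\times k$ square inside its support — contradicting Lemma~\ref{lemma_edge} (which would give $\mathrm{gcd}(p,q)>1$) in case a), and Lemma~\ref{lem_alldirection} in cases b), c); cases b), c) also require the preliminary reduction to $\Psi_\vertex'$, $\Psi_\node'$ and the non-propagation Lemma~\ref{lem_disruption}, none of which your localization-and-gluing step supplies. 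Without an argument of this kind for the uniform depth bound, your plan is a correct reduction but not yet a proof.
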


See a proof of (a) in Section~\ref{sec_proofedge} and a proof of (b,c) in Section~\ref{proof_vertices}. The problems to overcome in proofs are as follows: the deviation set $D(F)$ is infinite, and we need to prove that it ``flows'' only locally and can not significantly spread when we increase $n$. Then, even if the flow of $D(S_n(F))$ is restrained locally when $n$ increases, the deviation locus, in principle, can encircle growing regions where $S_n(F)$ is harmonic almost everywhere. After taming these and other technicalities, the proof amounts to the fact that there exists no integer-valued linear function which is less than $F$ an a non-empty compact set.

\begin{definition}
The pointwise minimal function in $\bigcup\Theta_n(F)$, which exists by
Theorem~\ref{th_stabilfn}, is called {\it the canonical smoothing of $F$} and is denoted by $\theta_F$. \m{relation to sandpiles}
\end{definition}
\begin{remark}
\label{rem_theta}Note that $\Delta \theta_F\geq -3$ because otherwise we could decrease $\theta_F$ at a point violating this condition, preserving superharmonicity of $\theta_F$, and this would contradict to the minimality of $\theta_F$ in $\bigcup\Theta_n(F)$.
\end{remark}

We think of $\ZZ^2$ as the vertices of the graph whose edges connect points with distance one. 
Let $F$ be $\Psi_{\edge}$, $\Psi_{\vertex}$, or $\Psi_{\node}$,  we write $$F(x,y)=\min_{(i,j)\in A}(ix+jy+a_{ij}).$$
Consider a sandpile state $\phi=3 + \Delta \theta_F$. By Remark~\ref{rem_theta}, $\phi\geq 0$ and $\phi$ is a stable state because $\theta_F$ is superharmonic. Let $v\in \ZZ^2$ be a point far from $D(\theta_F)$. Let $F$ be equal to $i_0x+j_0y+a_{i_0j_0}$ near $v$. The following corollary say, informally, that sending a wave from $v$ increases the coefficient $a_{i_0j_0}$ by one. 
\begin{corollary}
\label{cor_wavegp}
In the above conditions, $W_v\phi=3+\Delta\theta_{F'}$ where $W_v$ is the sending wave from $v$ (Definition~\ref{def_waves}) and $$F'(x,y)=\min\big(i_0x+j_0y+a_{i_0j_0}+1,\min_{(i,j)\in A, (i,j)\ne (i_0,j_0)}(ix+jy+a_{ij})\big).$$ 
\end{corollary}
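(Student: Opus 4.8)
The plan is to identify the odometer $u$ of the wave $W_v$ acting on $\phi$ with the difference $\theta_{F'}-\theta_F$; once this is done, $W_v\phi=\phi+\Delta u=3+\Delta\theta_F+\Delta(\theta_{F'}-\theta_F)=3+\Delta\theta_{F'}$, which is the assertion. First I would record the elementary facts about $F'$. Writing $F=\min(L,M)$ with $L(x,y)=i_0x+j_0y+a_{i_0j_0}$ and $M=\min_{(i,j)\in A\setminus\{(i_0,j_0)\}}(ix+jy+a_{ij})$, one has $F'=\min(L+1,M)$, so (using integrality) $F\le F'\le F+1$, with $F'=F+1$ on $\{L<M\}$ and $F'=F$ on $\{M\le L\}$. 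Since $v$ is far from $D(F)$ we have $L<M$ near $v$, hence $F'=L+1$ near $v$ and $v$ is far from $D(F')$ as well. One also checks that $F'$ is again, up to a lattice translation and an additive integer constant, one of $\Psi_{\edge},\Psi_{\vertex},\Psi_{\node}$, so $\theta_{F'}$ is defined by Theorem~\ref{th_stabilfn}; and $3+\Delta\theta_{F'}$ is stable and nonnegative by Remark~\ref{rem_theta}.

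Next, the easy inequality $u\le\theta_{F'}-\theta_F=:g$. By Lemma~\ref{lem_notevident} and $F\le F'$ we get $g\ge0$; since $v$ is far enough from the deviation sets, $\theta_F(v)=F(v)=L(v)$ and $\theta_{F'}(v)=F'(v)=L(v)+1$, so $g(v)=1$ and in particular $g-\odin v\ge0$; finally $\phi+\Delta g=3+\Delta\theta_{F'}\le3$. The wave is legitimate because $\phi\equiv3$ near $v$. Thus toppling $v$ once and then applying the toppling function $g-\odin v$ to $\phi+\Delta\odin v$ produces the stable state $3+\Delta\theta_{F'}$, so by the least action principle for locally finite relaxations (Appendix~\ref{sec_locallyrelaxations}) the wave from $v$ terminates and its odometer $u$ satisfies $u\le g$.

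For the reverse inequality I would show $\theta_F+u\in\Theta_N(F')$ for large $N$; minimality of $\theta_{F'}$ then gives $\theta_{F'}\le\theta_F+u$, i.e. $g\le u$, and we are done. The function $\theta_F+u$ is integer-valued; it is superharmonic because $3+\Delta(\theta_F+u)=W_v\phi\le3$; it is $\ge F'-N$ crudely from $\theta_F\ge F-N$; and it is $\le F'$ because $\theta_F+u\le\theta_F+g=\theta_{F'}\le F'$ by the previous paragraph. The support condition $\{\theta_F+u\ne F'\}\subset B_\C(D(F'))$ reduces, using $\theta_F+u\le F'$, that $\theta_F=F$ outside $B_\C(D(F))$, and that $F'-F=0$ on $\{M\le L\}$, to the claim that $u\ge1$ on $R:=\{L<M\}$ minus a bounded neighbourhood of the creases of $F$. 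I would prove this by a boundary argument. Let $X=\{u\ge1\}$ be the set of sites toppled by the wave, so $v\in X\cap R$. If $b\notin X$ has a neighbour $a\in X$, then stability of $W_v\phi$ at $b$ gives $\phi(b)+\Delta u(b)\le3$, while $\Delta u(b)=\sum_{b'\sim b}u(b')\ge u(a)\ge1$, forcing $\phi(b)\le2$; hence the outer vertex-boundary of $X$ is contained in $\{\phi\le2\}\subset B_\C(D(F))$. Since $R$ is convex (an intersection of half-planes) with a bounded-width collar removed, it is connected and disjoint from $B_\C(D(F))$, so a path inside $R$ joining $v$ to any $w\in R$ never meets the boundary of $X$; therefore $w\in X$, so $R\subset X$ and $u\ge1$ on $R$. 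Combining the two inequalities, $u=\theta_{F'}-\theta_F$, and the corollary follows.

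The main obstacle I anticipate is not any single inequality but the infinite-domain bookkeeping behind them: that the wave from $v$ is a well-defined locally finite relaxation for which a least action principle holds (the purpose of Appendix~\ref{sec_locallyrelaxations}); that $D(F)$, $D(F')$, $D(\theta_F)$, $D(\theta_{F'})$ all lie within a fixed distance of the creases of $F$, so that ``far from $D(\theta_F)$'' can be quantified uniformly; and that the region $R$ is genuinely connected for each of $\Psi_{\edge},\Psi_{\vertex},\Psi_{\node}$ and each choice of the distinguished piece $(i_0,j_0)$, including the diagonal pieces occurring in $\Psi_{\node}$.
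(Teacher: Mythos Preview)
Your proposal is correct and follows essentially the same two-inequality strategy as the paper: the bound $u\le g$ via the Least Action Principle for waves (Proposition~\ref{prop_waveleast}), and the bound $g\le u$ by checking that $\theta_F+u$ lies in $\bigcup_n\Theta_n(F')$ so that minimality of $\theta_{F'}$ applies. The paper simply asserts that $\theta_F+H^v_\phi$ coincides with $\theta_{F'}$ outside a finite neighborhood of $D(F')$, whereas you supply the boundary argument (the outer boundary of the toppled set lies in $\{\phi\le 2\}\subset B_\C(D(F))$) that justifies this; so your write-up is a strictly more detailed version of the same proof.
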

\begin{proof} Let $H^v_\phi$ \eqref{eq_topplewave} be the toppling function of the wave from $v$. Since $$W_v\phi = \phi +\Delta H^v_\phi =3+\Delta(\theta_{F}+H^v_\phi),$$ we want to prove that  $H^v_\phi = \theta_{F'}-\theta_F$. It follows from Lemma~\ref{lem_notevident} that $\theta_{F'}-\theta_F\geq 0$.
By the Least Action Principle for waves (Proposition~\ref{prop_waveleast}) we have that $\theta_{F'}-\theta_F\geq H^v_\phi$ because $\theta_{F'}-\theta_F=1$ at $v$, $\theta_F'-\theta_F\geq 0$ and $\phi + \Delta(\theta_F'-\theta_F) = 3+\theta_{F'}$ is a stable state. On the other hand, the function $\theta_F+ H^v_\phi$ coincides with $\theta_{F'}$ outside of a finite neighborhood of $D(F')$ and is superharmonic. Therefore, \m{connection to wave action}by the definition of $\theta_{F'}$,  we see that $\theta_{F'}\leq \theta_F+ H^v_\phi$ and this finishes the proof.
\end{proof}

\begin{remark}As we will see later, all sandpile solitons are of the form $3+\Delta\theta_{\Psi_\edge}$.
\end{remark}

\section{Holeless functions}

\m{technical about well-definedness}
\begin{definition}
\label{def_holeless}
We say that a function $F:\ZZ^2\to \ZZ$ is {\it holeless} if there exists $\C>0$ such that $B_\C(D(F))$ contains all the connected components of $\ZZ^2\setminus D(F)$ which belong to some finite neighborhood of $D(F)$. When we want to specify the constant $\C$ we write that $F$ is $\C$-holeless.
\end{definition}

\begin{example}
\label{ex_small}
The functions $F=\Psi_{\edge},\Psi_{\vertex},\Psi_{\node}$ (see \eqref{eq_edge},\eqref{eq_edge},\eqref{eq_node}) are holeless just because $\ZZ^2\setminus D(F)$ has no components which belong to a finite neighborhood of $D(F)$.
\end{example}

\begin{lemma}
\laber{lem_finiteneigh}
If $F$ is $\C$-holeless, then for each $G\in \Theta_n(F)$ the set $\{F\ne G\}$ is contained in $B_{\max(n,\C)}(D(F))$.
\end{lemma}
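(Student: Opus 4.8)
Put $H:=F-G$. From $G\in\Theta_n(F)$ we get that $H$ is $\ZZ$-valued with $0\le H\le n$, that $\Delta H\ge 0$ on $\ZZ^2\setminus D(F)$ (because $F$ is harmonic there and $G$ is superharmonic everywhere), and that $\{H\ne 0\}=\{F\ne G\}\subseteq B_{\C'}(D(F))$ for some $\C'>0$. Thus it suffices to prove: if $H(v)\ge 1$ then $\dist(v,D(F))\le\max(n,\C)$. If $v\in D(F)$ this is trivial, so assume $v$ lies in a connected component $U$ of $\ZZ^2\setminus D(F)$. If $U$ lies in a finite neighbourhood of $D(F)$, then $\C$-holelessness (Definition~\ref{def_holeless}) gives $U\subseteq B_\C(D(F))$ and we are done. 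So we may assume $\sup_{w\in U}\dist(w,D(F))=\infty$; in particular $U$ contains vertices at distance $>\C'$ from $D(F)$, where $H$ vanishes. We may also assume $n\ge 1$ (as $\Theta_0(F)=\{F\}$), and it remains to show $\dist(v,D(F))\le n$.

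\textbf{The escape-path mechanism.} The engine is the discrete mean-value inequality, exactly as in the proof of Lemma~\ref{lem_notevident}: if $w\in U$, $H(w)=h\ge 1$, and some neighbour of $w$ has $H$-value $<h$, then the four neighbour-values sum to $\ge 4h$, so some neighbour has $H$-value $\ge h+1$. Move there; the new vertex again has a smaller neighbour (the previous one), so iterating yields a path $w=u_0,u_1,\dots$ in $\ZZ^2$ along which $H$ strictly increases by at least $1$ at each step. As long as the current vertex is in $U$ the construction continues, and since $H\le n$ it cannot stay in $U$ for more than $n-h$ further steps; hence it reaches $D(F)$ within $n-h+1$ steps. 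Therefore \emph{any} $w\in U$ with $H(w)=h\ge 1$ that has a neighbour of strictly smaller $H$-value satisfies $\dist(w,D(F))\le n-h+1\le n$.

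\textbf{Producing a good starting vertex; the downward induction.} We must locate, near $v$, a vertex to which the mechanism above applies. Let $C$ be the connected component of $\{H\ge 1\}\cap U$ containing $v$; since $C\subseteq B_{\C'}(D(F))$, the integer function $\dist(\cdot,D(F))$ attains a maximum on $C$, say at $w^*$, with $\dist(w^*,D(F))\ge\dist(v,D(F))$. Suppose, for contradiction, $\dist(v,D(F))>n$; then $\dist(w^*,D(F))\ge n+1\ge 2$, so all four neighbours of $w^*$ lie in $U$. If one of them had $H$-value $<H(w^*)$ we would be done by the previous paragraph; so all neighbours of $w^*$ have $H$-value $\ge H(w^*)$, and $w^*$ is a local maximum of $\dist(\cdot,D(F))$ carrying a ``plateau'' of $H$. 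To rule this out I would prove by downward induction the claim $P(h)$: \emph{every vertex of a $D(F)$-unbounded component whose $H$-value equals $h$ lies in $B_{n-h}(D(F))$.} The base case $P(n)$ is unconditional: a vertex $w\in U$ with $H(w)=n$ forces (by the mean-value inequality — a neighbour of value $<n$ would produce one of value $\ge n+1$) all its neighbours to have value $n$, and this propagates through the connected set $U$, giving $H\equiv n$ on $U$, which contradicts the existence of $H$-zeros in $U$. In the inductive step one knows $\{H\ge h+1\}\cap U\subseteq B_{n-h-1}(D(F))$; then a vertex $w\in U$ with $H(w)=h$ and $\dist(w,D(F))\ge n-h+1$ can have no neighbour of value $\ne h$ (a neighbour of value $<h$, resp.\ $\ge h+1$, forces, via the mean-value inequality and $P(h+1),\dots,P(n)$, a neighbour of value $\ge h+1$ at distance $\ge n-h$ from $D(F)$, impossible), so again a plateau of level $h$ protrudes to depth $\ge n-h+1$ into $U$, and this is what must be excluded.

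\textbf{The main obstacle.} The crux is precisely controlling how deeply a plateau of a fixed level $h<n$ can protrude into a $D(F)$-unbounded component: while the base case $h=n$ is immediate, for $h<n$ the plateau is only forced to be ``locally closed'' within the region $\{\dist(\cdot,D(F))\ge n-h+1\}$, and excluding a plateau that is capped off at a local maximum of the distance function is where the real work lies. I expect this to come from combining the fact that $H$ is harmonic on the plateau with connectedness of $U$ and the presence of $H$-zeros arbitrarily far from $D(F)$ inside $U$. Finally, note that the slack in the stated bound $\max(n,\C)$ — versus the sharper $\max(n-1,\C)$ that $P(1)$ would give — is harmless.
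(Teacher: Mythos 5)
Your reductions, the escape-path/mean-value mechanism for $H=F-G$ (subharmonic off $D(F)$, $0\le H\le n$), and the base case $P(n)$ are all correct and are essentially the same engine the paper uses (a downward induction on the value of $H$). But the proof is not complete, and you say so yourself: in the inductive step you reduce to a level-$h$ plateau protruding to depth $\ge n-h+1$, and you only state that you \emph{expect} this configuration can be excluded. This is a genuine gap, and it is exactly the point where the paper does something you did not do. In the paper's argument, when a vertex of value $h$ off $D(F)$ has no strictly larger neighbour available off $D(F)$ (larger values off $D(F)$ being already confined by the higher levels of the induction; at the top level a value-$n$ vertex off $D(F)$ even forces its \emph{whole} component of $\ZZ^2\setminus D(F)$ into $\{H=n\}$), the mean-value inequality propagates the plateau through every vertex not adjacent to $D(F)$, so the plateau swallows the entire deep part of its component of $\ZZ^2\setminus D(F)$. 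Since the plateau lies in $\{F\ne G\}$, that component then lies in a finite neighbourhood of $D(F)$, and it is \emph{holelessness} that caps it by $\C$. In other words, the protruding-plateau case is not excluded at all; it is bounded by $\C$, and this is the real reason the lemma's bound is $\max(n,\C)$ rather than $n$.

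This also shows why your intended fix ("harmonicity on the plateau, connectedness of $U$, zeros far away") cannot work as stated for $h<n$: unlike the top level, the plateau propagation may stop at vertices adjacent to $D(F)$ (where a larger value sitting on $D(F)\cap\{H>h\}$ can absorb the mean-value inequality), so you cannot force the plateau to reach the far-away zeros of $H$ inside an unbounded component and derive a contradiction. The paper does not try to: it accepts the plateau and invokes holelessness. Correspondingly, your closing remark that the slack between $\max(n,\C)$ and $\max(n-1,\C)$ is harmless misreads the role of $\C$: in your write-up $\C$ only enters through the preliminary reduction to unbounded components, whereas in the paper's proof it is precisely what bounds the plateaus your induction leaves open. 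To repair your argument, replace the unproved exclusion in $P(h)$ by the plateau-propagation-plus-holelessness step, and weaken $P(h)$ to membership in $B_{\max(n-h,\C)}(D(F))$.
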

\begin{proof}
Let $A_n=\{v\in\ZZ^2| G(v)=F(v)-n\}$. If $v\in A_n\setminus D(F)$ then from the superharmonicity of $G$ and harmonicity of $F$ at $v$ we deduce that  all neighbors of $v$ belong to $A_n$. Therefore the connected component of $v\in A_n$ in $\ZZ^2\setminus D(F)$ belongs to $A_n$, which, in turn, belongs to a finite neighborhood of $D(F)$ because there belongs the set $\{F\ne G\}$. Thus $A_n$ belongs to $\C$-neighborhood of $D(F)$.  
By the same arguments, each point of $A_{n-1} = \{G=F-n+1\}$ is contained in the 1-neighborhood of $D(F)\cap A_n$ or, together with its connected component of $\ZZ^2\setminus D(F)$ belongs to $A_{n-1}$, i.e. is contained in $B_\C(D(F))$, $A_{n-2}$ is contained in the $2$-neighborhood of $D(F)\cap A_n$ or in $1$-neighborhood of $D(F)\cap A_{n-1}$, or in $B_\C(D(F))$, etc. 
\end{proof}

\begin{corollary}
\laber{cor_finiteness}
If $F$ is $\C$-holeless for some $\C>0$, then for each $n\geq 0$ the function $S_n(F)$ belongs to $\Theta_n(F)$.
\end{corollary}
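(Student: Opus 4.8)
The plan is to verify the four defining conditions of $\Theta_n(F)$ directly for $S_n(F)$; three of them are formal, and the only substantive point — that the locus $\{F\ne S_n(F)\}$ lies in a \emph{single} finite neighborhood of $D(F)$, rather than in an ever-growing union of neighborhoods — is exactly what the uniform bound of Lemma~\ref{lem_finiteneigh} (which is where $\C$-holelessness enters) is designed to give.

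First I would record that $\Theta_n(F)\ne\emptyset$, since $F$ itself lies in $\Theta_n(F)$ (with $\{F\ne F\}=\emptyset$), and that every $G\in\Theta_n(F)$ satisfies $G\ge F-n$. Hence for each $v\in\ZZ^2$ the set $\{G(v)\mid G\in\Theta_n(F)\}$ is a non-empty set of integers bounded below by $F(v)-n$, so its infimum is attained: there is some $G_v\in\Theta_n(F)$ with $G_v(v)=S_n(F)(v)$. In particular $S_n(F)$ is integer-valued and $F-n\le S_n(F)\le F$. For superharmonicity, fix $v$ and take $G_v$ as above: since $S_n(F)\le G_v$ everywhere with equality at $v$, we get $\Delta S_n(F)(v)\le \Delta G_v(v)\le 0$, and as $v$ was arbitrary, $\Delta S_n(F)\le 0$.

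It remains to control the support. By Lemma~\ref{lem_finiteneigh}, for every $G\in\Theta_n(F)$ one has $\{F\ne G\}\subset B_{\max(n,\C)}(D(F))$. Now if $S_n(F)(v)<F(v)$, then $G_v(v)=S_n(F)(v)<F(v)$, so $v\in\{F\ne G_v\}\subset B_{\max(n,\C)}(D(F))$. Therefore $\{F\ne S_n(F)\}\subset B_{\max(n,\C)}(D(F))$, which is a finite neighborhood of $D(F)$. Combining the three observations, $S_n(F)$ satisfies all conditions in Definition~\ref{def_thetan} with the constant $\max(n,\C)$, so $S_n(F)\in\Theta_n(F)$.

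The only step requiring an idea is the last one, and the idea is fully encapsulated in Lemma~\ref{lem_finiteneigh}: without holelessness the individual sets $\{F\ne G\}$, $G\in\Theta_n(F)$, could sit inside larger and larger neighborhoods of $D(F)$, so their union (which contains $\{F\ne S_n(F)\}$) need not be confined to any fixed neighborhood, and $S_n(F)$ could fail to lie in $\bigcup\Theta_n(F)$. Holelessness is precisely what upgrades the per-function finiteness to a uniform one and makes the argument go through.
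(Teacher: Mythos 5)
Your proof is correct and takes essentially the paper's intended route: the corollary is stated without proof as an immediate consequence of Lemma~\ref{lem_finiteneigh}, and your argument just spells out the implicit verification — pointwise attainment of the minimum over $\Theta_n(F)$, the standard min-of-superharmonic-functions comparison for superharmonicity, and the uniform containment of $\{F\ne S_n(F)\}$ in $B_{\max(n,\C)}(D(F))$ supplied by that lemma, which is exactly where holelessness is used.
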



\begin{corollary}
\laber{cor_grows}
Let $F$ be one of $\Psi_{\edge},\Psi_{\vertex},\Psi_{\node}$ (see \eqref{eq_edge},\eqref{eq_tripode},\eqref{eq_node}). 
Then for each $n\geq 1$ we have $$\dist\Big(D(F),\big\{F\ne S_n(F)\big\}\Big)\leq n.$$
\end{corollary}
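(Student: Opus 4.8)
\emph{Proof proposal.} The plan is to obtain this estimate as an immediate consequence of Corollary~\ref{cor_finiteness} and Lemma~\ref{lem_finiteneigh}. The first point to settle is the holeless constant. By Example~\ref{ex_small}, each of $\Psi_{\edge},\Psi_{\vertex},\Psi_{\node}$ is holeless, and in fact the defining condition holds vacuously for \emph{every} $\C>0$, since $\ZZ^2\setminus D(F)$ has no connected component contained in a finite neighborhood of $D(F)$. Using the hypothesis $n\geq 1$, I fix the holeless constant to be $\C=1$, so that $\C\leq n$.

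With this choice, Corollary~\ref{cor_finiteness} gives $S_n(F)\in\Theta_n(F)$ --- the pointwise infimum defining $S_n(F)$ is genuinely realized inside $\Theta_n(F)$ --- and then Lemma~\ref{lem_finiteneigh}, applied to $G=S_n(F)$ with $\C=1$, yields
$$\{F\ne S_n(F)\}\ \subset\ B_{\max(n,1)}(D(F))\ =\ B_n(D(F)).$$
By the definition of $B_n(\cdot)$, this is exactly the assertion that every point at which $F$ and $S_n(F)$ differ lies at distance at most $n$ from $D(F)$, i.e. $\dist(D(F),\{F\ne S_n(F)\})\leq n$.

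I do not expect any genuine obstacle here: the real work has already been carried out in Lemma~\ref{lem_finiteneigh}, whose proof propagates the superharmonicity of $G$ against the harmonicity of $F$ through the level sets $A_k=\{G=F-k\}$ to bound how far the difference $F-G$ can persist away from $D(F)$. The only things requiring care are bookkeeping --- checking that $n\geq 1$ indeed lets us absorb the holeless constant into $\max(n,\C)=n$, and reading $\dist(D(F),\{F\ne S_n(F)\})\leq n$ as the inclusion $\{F\ne S_n(F)\}\subset B_n(D(F))$ (so that the statement is in particular vacuously true should $\{F\ne S_n(F)\}$ be empty). A self-contained argument, bypassing the holeless formalism, is also available by the same path-of-strict-decrease reasoning used in the proof of Lemma~\ref{lem_notevident}, but invoking the two cited results is the most economical route.
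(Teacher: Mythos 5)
Your proposal is correct and follows exactly the route the paper intends: the corollary is stated without a separate argument precisely because it is the combination of Example~\ref{ex_small} (holelessness, vacuously for any constant), Corollary~\ref{cor_finiteness} ($S_n(F)\in\Theta_n(F)$), and Lemma~\ref{lem_finiteneigh} with $\max(n,\C)=n$ for $n\geq 1$. Your reading of the distance bound as the inclusion $\{F\ne S_n(F)\}\subset B_n(D(F))$ is also the intended one, so there is nothing to add.
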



\section{Smoothing by steps} 
\label{sec_propertiessmooth}

Let $F,G$ be two superharmonic integer-valued functions on $\ZZ^2$. Suppose that $H=F-G$ is non-negative and bounded. Let $m$ be the maximal value of $H$. Define the functions $H_k, k=0,1,\dots, m$ as follows:
\begin{equation}
\label{eq_chi}
H_k(v) =\chi(H\geq k) = \left\{
\begin{aligned}
1,\  & \mathrm{if}\ H(v)\geq k,\\
0,\ & \mathrm{otherwise}.
\end{aligned}
\right.
\end{equation}

\begin{lemma}
\label{lemma_slice}
In the above settings, the function $F-H_m$ is superharmonic.
\end{lemma}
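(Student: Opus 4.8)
The plan is to check superharmonicity of $F-H_m$ pointwise: fix $v\in\ZZ^2$ and show that $\Delta(F-H_m)(v)=\Delta F(v)-\Delta H_m(v)\le 0$, i.e.\ that $\Delta F(v)\le\Delta H_m(v)$. Since $m$ is the maximal value of $H$, the set $M:=\{H\ge m\}$ on which $H_m$ equals $1$ is exactly $\{H=m\}$, and on $M$ we have $F=G+m$.

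I would split into two cases. If $v\notin M$, then $H_m(v)=0$, so $\Delta H_m(v)$ equals the number of neighbours of $v$ lying in $M$, which is $\ge 0$; since $F$ is superharmonic, $\Delta F(v)\le 0\le\Delta H_m(v)$, and there is nothing more to do in this case.

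The substantive case is $v\in M$, where $H_m(v)=1$ and one genuinely needs both the superharmonicity of $G$ and the maximality of $m$. Write $F=G+H$ and examine a neighbour $w$ of $v$: if $w\in M$ then $F(w)=G(w)+m$, while if $w\notin M$ then $H(w)<m$, hence $H(w)\le m-1$ because $H=F-G$ is integer-valued — this is the only place where integrality enters, and the statement really does require it. Letting $k$ be the number of neighbours of $v$ lying in $M$, summing these four inequalities and using $4F(v)=4G(v)+4m$ yields
\[
\Delta F(v)\ \le\ \Delta G(v)+k-4\ \le\ k-4\ =\ \Delta H_m(v),
\]
the last steps because $\Delta G(v)\le 0$ and $\Delta H_m(v)=k-4H_m(v)=k-4$. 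Hence $\Delta(F-H_m)(v)\le 0$ here as well, and the lemma follows.

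I expect no real obstacle: the argument is a two-line case analysis. The only points deserving attention are the bookkeeping of the neighbour contributions when $v\in M$ and the observation that the strict inequality $H(w)<m$ upgrades to $H(w)\le m-1$ precisely because $F$ and $G$ — and therefore $H$ — are integer-valued.
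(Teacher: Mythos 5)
Your proof is correct and is essentially the paper's argument: the case $v\notin\{H=m\}$ is dismissed because subtracting the indicator only raises the Laplacian's negative part there, and at $v\in\{H=m\}$ you use superharmonicity of $G$ together with the integrality bound $H(w)\le m-1$ off the maximum set, exactly as the paper does. The only difference is presentational — you compare $\Delta F(v)$ with $\Delta H_m(v)$ via the neighbour count $k$, while the paper compares the values $4(F-H_m)(v)$ and $\sum_{w\sim v}(F-H_m)(w)$ directly.
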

\begin{proof}
Indeed, $F-H_m$ is superharmonic outside of the set  $\{x | H(x)=m\}$. Look at any point $v$ such that $H(v)=m$. Then we conclude by $$4(F-H_m)(v)= 4G(v)+4(m-1)\geq \sum_{w\sim v} G (w)+4(m-1)\geq \sum_{w\sim v} (F-H_m)(w).$$
\end{proof}


We repeat the procedure in Lemma~\ref{lemma_slice} for $F-H_m$; namely, consider $F-H_m-H_{m-1}, F-H_m-H_{m-1}-H_{m-2}$, etc. We have $$H=H_m+H_{m-1}+ H_{m-2}+\dots+ H_1,$$ and it follows from subsequent applications of Lemma~\ref{lemma_slice} that all the functions $F-\sum_{n=m}^{m-k+1} H_n$ are superharmonic, for $k=1,2,\dots,m$. Also, it is clear that $$0 \leq \left(F-\sum_{n=m}^{m-k+1} H_n\right)-\left(F-\sum_{n=m}^{m-k} H_n\right) = H_{m-k}\leq 1$$ at all $v\in\G,k=0,\dots,m$.

\begin{remark}
It follows from definition of $H_i$ that $\supp(H_m)\subset \supp(H_{m-1})\subset\dots$ and, hence, $|H_i-H_{i+1}|\leq 1$ for $i=1,\dots, m-1$. \end{remark}

Consider a superharmonic function $F.$ We are going to prove that two consequitive smoothings (see Definition~\ref{def_thetan}) of $F$ differ at most by one at every point of $\ZZ^2$.

\begin{proposition}
\laber{prop_slicingFn}For all $n\in\NN$
$$|S_n(F)-S_{n+1}(F)|\leq 1.$$
\end{proposition}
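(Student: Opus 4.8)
The plan is to prove the two one-sided inequalities $S_{n+1}(F)\le S_n(F)$ and $S_n(F)\le S_{n+1}(F)+1$ separately. The first is immediate: every $G\in\Theta_n(F)$ also lies in $\Theta_{n+1}(F)$, since the conditions $\Delta G\le 0$, $G\le F$ and $\{F\ne G\}\subset B_\C(D(F))$ are untouched while $F-n\le G$ only weakens to $F-(n+1)\le G$; hence $\Theta_n(F)\subseteq\Theta_{n+1}(F)$ and $S_{n+1}(F)=\min_{\Theta_{n+1}(F)}\le\min_{\Theta_n(F)}=S_n(F)$ pointwise. So the whole content of the proposition is the bound $S_n(F)\le S_{n+1}(F)+1$.

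For that bound I would work one point at a time. Fix $v_0\in\ZZ^2$. The set $\{G(v_0): G\in\Theta_{n+1}(F)\}$ is a nonempty subset of $\ZZ$ (it contains $F(v_0)$, as $F\in\Theta_{n+1}(F)$) bounded below by $F(v_0)-(n+1)$, so the minimum defining $S_{n+1}(F)(v_0)$ is attained: pick $G'\in\Theta_{n+1}(F)$ with $G'(v_0)=S_{n+1}(F)(v_0)$. Then I would test the competitor
$$G:=\min\bigl(G'+1,\,F\bigr)$$
against the definition of $S_n(F)$: if $G\in\Theta_n(F)$ then $S_n(F)(v_0)\le G(v_0)\le G'(v_0)+1=S_{n+1}(F)(v_0)+1$, and letting $v_0$ vary gives the proposition.

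The verification $G\in\Theta_n(F)$ splits into three routine checks. Superharmonicity: $G'+1$ is superharmonic (adding a constant) and $F$ is superharmonic by hypothesis, so $G=\min(G'+1,F)$ is superharmonic by Lemma~\ref{lem_minharmonic}. The sandwich $F-n\le G\le F$: the right inequality is clear, and $G'\ge F-(n+1)$ gives $G'+1\ge F-n$ while $F\ge F-n$, so $G\ge F-n$. The support condition: $G(v)\ne F(v)$ forces $G'(v)+1<F(v)$, hence $G'(v)\ne F(v)$, so $\{F\ne G\}\subset\{F\ne G'\}$, and the latter lies in some $B_\C(D(F))$ because $G'\in\Theta_{n+1}(F)$.

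I expect the only delicate point to be the support (finite-neighborhood) condition, which is the reason for the slightly indirect choice of $G'$: one would like to take $G'=S_{n+1}(F)$ outright, but for a general superharmonic $F$ (not assumed holeless) the set $\{F\ne S_{n+1}(F)\}$ need not sit in a \emph{finite} neighborhood of $D(F)$, being an infimum over a family of a priori unbounded neighborhoods. Choosing a genuine minimizer $G'$ at the single point $v_0$ avoids this, since each member of $\Theta_{n+1}(F)$ does satisfy the neighborhood constraint. Conceptually, $G=\min(G'+1,F)$ equals $G'+\chi(F-G'\ge 1)$, i.e.\ $G'$ with its topmost ``slice'' added back (equivalently, $F$ with all but the top slice of $F-G'$ removed); this is exactly where the smoothing-by-steps construction of Lemma~\ref{lemma_slice} does the work, and one could alternatively obtain the superharmonicity of $G$ --- and, when $F$ is holeless, run the whole argument --- by peeling slices off $F-S_{n+1}(F)$ directly, noting that $F-S_{n+1}(F)\le n+1$ leaves at most one slice after $n$ steps.
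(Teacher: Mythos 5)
Your proof is correct, but it proceeds differently from the paper's. The paper argues globally by slicing: assuming the maximum $M$ of $H=S_n(F)-S_{n+1}(F)$ is at least $2$, it subtracts the top slice $\chi(H\geq M)$ from $S_n(F)$, uses Lemma~\ref{lemma_slice} for superharmonicity, shows the result still lies above $F-n$ (otherwise subtracting also $\chi(H\geq 1)$ would push $S_{n+1}(F)$ below $F-(n+1)$), and thus obtains a member of $\Theta_n(F)$ strictly below $S_n(F)$, contradicting minimality. You instead work pointwise: at a fixed $v_0$ you pick an actual minimizer $G'\in\Theta_{n+1}(F)$ and test the competitor $\min(G'+1,F)$, needing only Lemma~\ref{lem_minharmonic}. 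Each route has its merits. Your argument is more elementary and, as you note, it is also more robust: the paper's proof tacitly treats $S_n(F)$ and $S_{n+1}(F)$ as admissible competitors, i.e.\ it needs $\{F\ne S_n(F)-\chi(H\geq M)\}$ to lie in a \emph{single} finite neighborhood of $D(F)$, which is automatic when $F$ is holeless (Corollary~\ref{cor_finiteness}) but not obvious for a general superharmonic $F$, since $\{F\ne S_{n+1}(F)\}$ is a union over competitors with unbounded constants $\C$; your choice of a genuine minimizer at a single point sidesteps this entirely. On the other hand, the paper's slicing computation is the template it reuses later (e.g.\ in the monotonicity arguments of Section~\ref{sec_mono}), and your closing remark correctly identifies that your competitor $\min(G'+1,F)=G'+\chi(F-G'\geq 1)$ is the same ``top slice'' object seen from the other side, so the two proofs are close in spirit even though the bookkeeping differs.
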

\begin{proof}
By definition, \m{standard simple lemma. we just need to subtract the maximum of difference} $S_n(F)\geq S_{n+1}(F)$ at every point of $\mathbb{Z}^2$. If the inequality $|S_n(F)-S_{n+1}(F)|\leq 1$ doesn't hold, then the maximum $M$ of the function $H=S_{n}(F)-S_{n+1}(F)$ is at least $2$. We will prove that 
$$S_n(F)-\chi (H\geq M)\geq F-n.$$

Namely, by Lemma \ref{lemma_slice} the function $S_n(F)-\chi (H\geq M)$ is superharmonic. Suppose that $$S_n(F)-\chi (H\geq M)< F-n \text{\ at\ a\ point\ $v\in\ZZ^2$}.$$ 
Since the support of $\chi(H\geq 1)$ contains the support of $\chi(H\geq M)$, we have that $$S_n(F)-\chi (H\geq M) - \chi (H\geq 1) <F-(n+1) \text{\ at\  $v$}.$$ But this contradicts to $$S_n(F)-\chi(H\geq 1)-\chi (H\geq M)\geq S_{n+1}(F)\geq F-(n+1).$$

Therefore $S_n(F)-\chi (H\geq M)\in\Theta_n(F)$ which contradicts the minimality of $S_n(F)$.

\end{proof}
\m{that were all the faction about slicing we need}

\begin{corollary}\laber{cor_fnmin}
Proposition~\ref{prop_slicingFn} and Corollary~\ref{cor_grows} imply that the function $S_{n+1}(F)$ can be characterized as the point-wise minimum of all superharmonic functions $G$ such that $S_n(F)-1\leq G\leq S_n(F)$  and $S_n(F)-G$ vanishes outside some finite neighborhood of $D(S_n(F))$ (recall that the distance between $D(F), D(S_n(F))$ is at most $n$). 
In other words, $n$-smoothing $S_n(F)$ of $F$ is the same as $1$-smoothing of $(n-1)$-smoothing $S_{n-1}(F)$ of $F$.
\end{corollary}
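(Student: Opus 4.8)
The plan is to reduce the statement to the single identity $S_{n+1}(F)=S_1\big(S_n(F)\big)$. Indeed, by Definition~\ref{def_thetan} the pointwise minimum described in the statement is exactly the $1$-smoothing of the superharmonic integer-valued function $S_n(F)$, and the ``in other words'' sentence is this identity with $n$ replaced by $n-1$ (iterating it writes $S_n(F)$ as an $n$-fold composition of $S_1$). I would prove the identity via the two inequalities $S_{n+1}(F)\le S_1\big(S_n(F)\big)$ and $S_{n+1}(F)\ge S_1\big(S_n(F)\big)$, showing respectively that every competitor for $S_1\big(S_n(F)\big)$ lies in $\Theta_{n+1}(F)$, and that $S_{n+1}(F)$ is itself such a competitor. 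First I would record two facts. Since $F$ is one of $\Psi_{\edge},\Psi_{\vertex},\Psi_{\node}$ it is holeless (Example~\ref{ex_small}), so Corollary~\ref{cor_finiteness} and Lemma~\ref{lem_finiteneigh} give $S_k(F)\in\Theta_k(F)$ and $\{S_k(F)\ne F\}\subset B_k(D(F))$ for every $k$, whence $D(S_k(F))\subset D(F)\cup B_1\big(\{S_k(F)\ne F\}\big)\subset B_{k+1}(D(F))$; and the deviation set $D(F)$ of each of these $\Psi$ is thin --- within bounded distance of finitely many rays --- so $B_m(D(F))$ contains no disc of radius exceeding some $R_0=R_0(m)$.

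For $S_{n+1}(F)\le S_1\big(S_n(F)\big)$: given an integer-valued superharmonic $G$ with $S_n(F)-1\le G\le S_n(F)$ and $\{G\ne S_n(F)\}\subset B_\C(D(S_n(F)))$ for some $\C$, I would check $G\in\Theta_{n+1}(F)$. We have $G\le S_n(F)\le F$, and $G\ge S_n(F)-1\ge F-(n+1)$ since $S_n(F)\ge F-n$; moreover $\{G\ne F\}\subset\{G\ne S_n(F)\}\cup\{S_n(F)\ne F\}\subset B_{\C+n+1}(D(F))\cup B_n(D(F))$ by the recorded facts, i.e. a finite neighbourhood of $D(F)$. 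Hence $S_{n+1}(F)\le G$, and taking the infimum over all such $G$ gives the inequality.

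For the reverse inequality I would verify that $S_{n+1}(F)$ is a competitor for $S_1\big(S_n(F)\big)$. It is integer-valued and superharmonic; the monotonicity $\Theta_n(F)\subset\Theta_{n+1}(F)$ gives $S_{n+1}(F)\le S_n(F)$, and Proposition~\ref{prop_slicingFn} gives $|S_n(F)-S_{n+1}(F)|\le 1$, so $S_n(F)-1\le S_{n+1}(F)\le S_n(F)$. The remaining point --- the hard part --- is that $\{S_{n+1}(F)\ne S_n(F)\}$ must lie in a finite neighbourhood of $D(S_n(F))$, whereas Lemma~\ref{lem_finiteneigh} only controls it relative to $D(F)$: $\{S_{n+1}(F)\ne S_n(F)\}\subset\{S_{n+1}(F)\ne F\}\subset B_{n+1}(D(F))$. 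To bridge this I would argue as in the proof of Lemma~\ref{lem_finiteneigh}: for $v$ with $S_{n+1}(F)(v)<S_n(F)(v)$ we have $S_{n+1}(F)(v)=S_n(F)(v)-1$, and if $v\notin D(S_n(F))$ then comparing $\Delta S_{n+1}(F)(v)\le 0$ with $\Delta S_n(F)(v)=0$, using $S_{n+1}(F)\ge S_n(F)-1$, forces $S_{n+1}(F)=S_n(F)-1$ at every neighbour of $v$; hence $S_{n+1}(F)\equiv S_n(F)-1$ on the whole connected component $U$ of $v$ in $\ZZ^2\setminus D(S_n(F))$, so $U\subset\{S_{n+1}(F)\ne F\}\subset B_{n+1}(D(F))$. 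If $\dist(v,D(S_n(F)))>R_0(n+1)$ the disc $B_{R_0(n+1)}(v)$ would be contained in $U\subset B_{n+1}(D(F))$, contradicting the second recorded fact. Therefore $\{S_{n+1}(F)\ne S_n(F)\}\subset B_{R_0(n+1)}(D(S_n(F)))$, so $S_{n+1}(F)$ is a valid competitor and $S_1\big(S_n(F)\big)\le S_{n+1}(F)$.

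I expect the only genuine obstacle to be the support bookkeeping in the previous paragraph: one must exclude that $\{S_{n+1}(F)\ne S_n(F)\}$ drifts away from $D(S_n(F))$ along the tube around $D(F)$, or that $D(S_n(F))$ encloses ``holes'' --- both ruled out because $D(F)$ is thin (contains no large disc), which is exactly why the natural hypothesis is ``$F$ is one of $\Psi_{\edge},\Psi_{\vertex},\Psi_{\node}$'' (the setting of Corollary~\ref{cor_grows}) rather than general holelessness. Everything else is immediate from Proposition~\ref{prop_slicingFn}, Definition~\ref{def_thetan}, and $S_n(F)\ge F-n$.
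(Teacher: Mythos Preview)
Your proposal is correct and is essentially a careful unpacking of what the paper leaves implicit: the paper states this corollary without proof, merely invoking Proposition~\ref{prop_slicingFn} and Corollary~\ref{cor_grows}, so there is no detailed argument in the paper to compare against. Your two-inequality strategy and the use of $|S_n(F)-S_{n+1}(F)|\le1$ for the reverse direction are exactly what the cited results are meant to supply; the one point you rightly flag as needing work---that $\{S_{n+1}(F)\ne S_n(F)\}$ lies in a finite neighbourhood of $D(S_n(F))$ rather than merely of $D(F)$---is handled by your ``thinness'' observation (that $B_m(D(F))$ contains no large disc for $F\in\{\Psi_{\edge},\Psi_{\vertex},\Psi_{\node}\}$), which the paper tacitly relies on via the parenthetical remark that $\dist(D(F),D(S_n(F)))\le n$.
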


\begin{corollary}
\laber{cor_1smoothing}
If $S_n(F)\ne S_{n+1}(F)$ then there exists a point $v_0$ such that $S_{n+1}(F)(v_0)=F(v_0)-(n+1)$.
\end{corollary}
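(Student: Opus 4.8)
The plan is to derive a contradiction from the minimality of $S_n(F)$ in $\Theta_n(F)$. First observe that the sets $\Theta_n(F)$ and $\Theta_{n+1}(F)$ differ only in the lower bound imposed on $G$ (namely $F-n$ versus $F-(n+1)$), so $\Theta_n(F)\subseteq\Theta_{n+1}(F)$ and consequently $S_{n+1}(F)\le S_n(F)$ pointwise. Moreover, by Corollary~\ref{cor_finiteness} we have $S_{n+1}(F)\in\Theta_{n+1}(F)$; in particular $S_{n+1}(F)$ is integer-valued and superharmonic, $F-(n+1)\le S_{n+1}(F)\le F$, and $\{F\ne S_{n+1}(F)\}$ is contained in a finite neighborhood of $D(F)$.

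Now suppose, for contradiction, that there is no point $v_0$ with $S_{n+1}(F)(v_0)=F(v_0)-(n+1)$. Since $S_{n+1}(F)\ge F-(n+1)$ everywhere and all quantities are integer-valued, this forces $S_{n+1}(F)\ge F-n$ at every point of $\ZZ^2$. Combining this with the properties listed above, $S_{n+1}(F)$ satisfies every condition in the definition of $\Theta_n(F)$, hence $S_{n+1}(F)\in\Theta_n(F)$. By minimality of $S_n(F)$ in $\Theta_n(F)$ we get $S_n(F)\le S_{n+1}(F)$, and together with $S_{n+1}(F)\le S_n(F)$ this yields $S_n(F)\equiv S_{n+1}(F)$, contradicting the hypothesis $S_n(F)\ne S_{n+1}(F)$.

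I do not expect a genuine obstacle here: the statement is essentially the observation that the last ``slice'' performed in passing from $S_n(F)$ to $S_{n+1}(F)$ must touch the floor $F-n-1$. One can phrase the same argument through the slicing picture of Corollary~\ref{cor_fnmin}: by Proposition~\ref{prop_slicingFn} the function $H=S_n(F)-S_{n+1}(F)$ satisfies $H=\chi(H\ge 1)$, and if $H$ never brought $S_{n+1}(F)$ down to $F-n-1$, then $S_n(F)-H=S_n(F)-\chi(H\ge 1)$ would be superharmonic, $\ge F-n$, equal to $F$ off a finite neighborhood of $D(F)$, and strictly below $S_n(F)$ wherever $H=1$ --- again impossible by the minimality of $S_n(F)$. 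The only clause requiring a little care is the finite-neighborhood condition, which is inherited directly from $S_{n+1}(F)\in\Theta_{n+1}(F)$ via Corollary~\ref{cor_finiteness}.
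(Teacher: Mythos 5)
Your proposal is correct and follows essentially the same route as the paper: the paper's one-line argument is precisely that if no point reaches $F-(n+1)$, then by integrality $S_{n+1}(F)\geq F-n$, so $S_{n+1}(F)$ competes in $\Theta_n(F)$ and minimality forces $S_{n+1}(F)=S_n(F)$. You merely make explicit the ingredients the paper leaves implicit (membership $S_{n+1}(F)\in\Theta_{n+1}(F)$ via Corollary~\ref{cor_finiteness} and the inclusion $\Theta_n(F)\subseteq\Theta_{n+1}(F)$), which is a faithful elaboration rather than a different proof.
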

Indeed, if there is no such a point, \m{in fact, very important corollary} then $S_{n+1}(F)\geq F-n$ and therefore $S_{n+1}(F)=S_n(F)$.


\begin{remark}
\laber{rem_cases}
Let $F(x,y)=\min(x,y,0)$ or $F(x,y)=\min(x,y,x+y,c)$ for $c\in\ZZ_{\geq 0}$. Then it is easy to check that $S_1(F)(x,y)=F(x,y)$ and therefore $\Theta_F=\{F\}$ (see Definition~\ref{def_thetan} for the notation).  
\end{remark}
\m{this is an important example!}
\section{Monotonicity while smoothing}
\label{sec_mono}
\begin{definition}
\laber{def_monotone}
Let $e\in\ZZ^2\setminus\{(0,0)\}$. We say that a function $F:\ZZ^2\to\ZZ$ is {\it $e$-increasing} if 
\begin{enumerate}[a)]
\item $F$ is a smoothing of a \m{main tool, probably with bugs} holeless function,
\item $F(v)\leq F(v+e)$ holds for each $v\in\mathbb{Z}^2$,
\item there exists a constant $\C>0$ \m{convexity condition} such for each $v$ with $F(v)=F(v-e)$, the first vertex $v-ke$ in the sequence $v,v-e,v-2e,\dots$, satisfying $F(v-ke)<F(v-(k-1)e)$, belongs to $B_\C(D(F))$.
\end{enumerate}
\end{definition}

\begin{example}
\laber{ex_monotone}
Let $F(x,y)=\min(px+qy,0)$ where $p,q\in\ZZ$. Note that $F$ is $(e_1,e_2)$-increasing if and only if $pe_1+qe_2\geq 0$. In particular, $F$ is $(0,1)$-increasing if $q>0$ and $(e_1,e_2)$-increasing if $p<0<q$ and $0\leq e_1\leq q-1,e_2\geq|p|$.
\end{example}

\begin{lemma}
|f $F$ is $e$-increasing, then $S_1(F),$ the $1$-smoothing of $F$, is also $e$-increasing. \m{main technical tool, concavity of functions}
\end{lemma}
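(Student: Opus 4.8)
The plan is to analyze how the slicing procedure from Section~\ref{sec_propertiessmooth} interacts with the partial order induced by $e$. By Corollary~\ref{cor_fnmin}, $S_1(F)$ is obtained from $F$ by subtracting a finite union of characteristic functions $H_m, H_{m-1}, \dots, H_1$ where $H = F - S_1(F)$ and $m$ is its maximum. Property (a) for $S_1(F)$ is automatic from the definition (a smoothing of a holeless function is again obtained by smoothing, and one checks $S_1(F)$ is itself holeless with an adjusted constant, using Lemma~\ref{lem_finiteneigh}). So the real content is verifying properties (b) and (c) for $G := S_1(F)$.

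For property (b), I want to show $G(v) \leq G(v+e)$ for all $v$, i.e.\ $H$ is ``$e$-compatible'' with $F$ in the sense that $F(v) - H(v) \leq F(v+e) - H(v+e)$. The key observation should be that the level sets $\{H \geq k\}$ are, in the relevant directions, ``downward closed'' with respect to translation by $e$: if $H(v+e) \geq k$ then $H(v) \geq k$. Equivalently, $\supp(H_k)$ is closed under $v \mapsto v - e$ modulo the boundary effects controlled by the constant $\C$. To establish this I would argue by the minimality characterization of $S_1(F)$: consider the competitor $G'$ defined by $G'(v) = \min(G(v), G(v+e))$ (or more precisely an $e$-shifted truncation), show it is superharmonic by Lemma~\ref{lem_minharmonic}, show it still lies in the appropriate $\Theta_1$-type class using the convexity condition (c) for $F$ to bound where $G$ and its $e$-shift can disagree, and conclude $G' \geq G$, which forces $G(v) \leq G(v+e)$. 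The condition (c) for $F$ is exactly what prevents the ``$e$-decrease locus'' from escaping a bounded neighborhood of $D(F)$, hence of $D(G)$ since $\dist(D(F), D(G)) \leq 1$ by Corollary~\ref{cor_grows}.

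For property (c), I need: there is a constant $\C'$ such that whenever $G(v) = G(v-e)$, the first index $k$ with $G(v - ke) < G(v-(k-1)e)$ has $v - ke \in B_{\C'}(D(G))$. The strategy is to track how subtracting each slice $H_j$ can create new ``flat stretches'' in the $e$-direction. Since $|H_j - H_{j+1}| \leq 1$ and the supports are nested, each slice changes $G$ along an $e$-line by introducing at most a bounded perturbation near $\supp(H_j) \subset B_{\max(1,\C)}(D(F))$; a flat stretch of $G$ in direction $-e$ either already corresponds to a flat stretch of $F$ (handled by (c) for $F$) or has its ``breaking point'' within a bounded neighborhood of where some slice was subtracted, i.e.\ within $B_{\C''}(D(F)) \subset B_{\C''+1}(D(G))$. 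Combined with the convexity-type bound for $F$, this yields a uniform $\C'$.

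The main obstacle I expect is property (c): controlling the breaking points of $e$-monotone stretches after slicing, because a priori the slices could conspire to flatten $G$ over a long segment and push the breaking point far from $D(G)$. The heart of the matter is that the slices $H_j$ are themselves $\{0,1\}$-valued and nested, and subtracting a nested family of indicator functions from an $e$-increasing function cannot create a long flat segment whose break happens far from the supports --- roughly because along an $e$-line, $G$ restricted to that line differs from $F$ restricted to it only near $\bigcup \supp(H_j)$, and between two such regions $G$ and $F$ have the same increments. Making this precise, with the quantifier order right (a single $\C'$ working for all flat stretches), is where the careful argument lies; I would phrase it via a discrete ``first descent'' analysis along each $e$-line, bounding the distance from the descent point to the nearest point of $\supp(H_j)$ for the largest relevant $j$.
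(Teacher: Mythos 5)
Your treatment of property (b) is correct but follows a genuinely different route from the paper's. You compare $S_1(F)$ with its $e$-shift: the competitor $G'(v)=\min\bigl(S_1(F)(v),\,S_1(F)(v+e)\bigr)$ is superharmonic by Lemma~\ref{lem_minharmonic}, satisfies $F-1\le G'\le F$ because $F(v+e)\ge F(v)$, and $\{G'\ne F\}$ lies in $B_{\C+|e|}(D(F))$ since $G'(v)<F(v)$ forces $v$ or $v+e$ to lie in $\{S_1(F)\ne F\}$; minimality (Corollary~\ref{cor_fnmin}) then gives $G'\ge S_1(F)$, i.e.\ exactly (b). This is the same device the paper uses for Lemma~\ref{lemma_periodicsmoothongs} (periodicity of smoothings), and notably it does not use condition (c) of Definition~\ref{def_monotone} at all. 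The paper instead argues by contradiction: it forms the set $A$ where (b) fails, pulls it back along $e$ to a set $B$ on which $H=F-S_1(F)$ vanishes, uses condition (c) for $F$ to show $B$ stays in a bounded neighborhood of $D(F)$, and shows $S_1(F)-\sum_{v\in B}\delta_v$ is an admissible competitor. Your route is shorter and needs fewer hypotheses; the paper's stays closer to the slicing formalism but is otherwise not stronger. One side remark of yours is false as stated: the level sets $\{H\ge k\}$ need not be closed under $v\mapsto v-e$ (this fails wherever $F(v)<F(v+e)$), but your actual argument never uses this, and since $|F-S_1(F)|\le 1$ there is only one slice anyway.

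For property (c) your mechanism is the right one, and it can be made precise as follows: along a flat stretch of $S_1(F)$ in the $-e$ direction one has $F(v-(j-1)e)-F(v-je)=H(v-(j-1)e)-H(v-je)\ge 0$, so the $\{0,1\}$-valued function $H$ is monotone along the stretch and changes value at most once; hence, apart from a bounded piece adjacent to $\supp H\subset B_{\C}(D(F))$, the stretch is a flat stretch of $F$ and its first descent is controlled by (c) for $F$. The one step you assert without proof is the inclusion $B_{\C''}(D(F))\subset B_{\C''+1}(D(S_1(F)))$: this is true but requires the small lemma $D(F)\subset B_1(D(S_1(F)))$, e.g.\ if $S_1(F)$ were harmonic at a point of $D(F)$ and at all its neighbors, then $H$ would be $\{0,1\}$-valued and superharmonic there with $\Delta H=\Delta F<0$, forcing $H=1$ at the point and $H=0$ at some neighbor, and superharmonicity of $H$ at that neighbor gives a contradiction. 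The paper's own one-sentence justification of (c) elides this as well (it even quotes the reverse inclusion $D(S_1(F))\subset B_{\C}(D(F))$), so your sketch is at a comparable level of completeness; supplying this small lemma and the short case analysis at the point where $H$ drops would make your (c) argument fully rigorous.
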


\begin{proof} 

Corollary~\ref{cor_fnmin} gives the property a) of Definition~\ref{def_monotone} for free.
To prove that $S_1(F)$ satisfies b) in Definition~\ref{def_monotone} we
 argue {\it a contrario}. Let $H= F-S_1(F)$. Suppose that the set $$A=\{v\in\mathbb{Z}^2 | F(v-e)=F(v),H(v-e)=0, H(v)=1\}$$ is not empty. Since $H|_A=1$, $A\subset B_\C(D(G))$. Consider the set $$B=\{v| H(v)=0, \exists n\in\mathbb{Z}_{> 0}, v+n\cdot e\in A, F(v)=F(v+n\cdot e)\}.$$ 
 
Consider $v\in B$. Since $v+n\cdot e \in A\subset B_C(D(F))$ and $F(v)=F(v+n\cdot e)$, then c) in Definition~\ref{def_monotone} impose an absolute bound on $n$  and therefore $B$ belongs to a finite neighborhood of $D(F)$.  Consider the following function $$\tilde F=S_1(F)-\sum_{v\in B}\delta_v.$$ 
It is easy to verify that $\tilde F(v)\leq \tilde F(v+e)$ for each $v$. Note that $\Delta \tilde F(v)\leq \Delta S_1(F)(v)\leq 0$ automatically for all $v\in\ZZ^2\setminus B$. 
 Take $v\in B$. Since $v+n\cdot e\in A$ for some $n\in\ZZ_{>0}$, we have
$$4\tilde F(v)=4S_1(F)(v+n\cdot e)\geq \sum_{w\sim v}S_1(F)(w+n\cdot e)\geq \sum_{w\sim v}\tilde F(w).$$ 

 Therefore $\tilde F$ is superharmonic, and satisfies $F\geq \tilde F\geq F-1$ by construction, which contradicts to the minimality of $S_1(F)$ in $\Theta_1(F)$. 
 
  Finally, by Corollary~\ref{cor_finiteness} the sets $\{F-S_1(F)\}$ and $D(S_1(F))$ belong to $B_\C(D(F))$ for some $\C>0$. Therefore the fact that $|S_1(F)-F|\leq 1$ (Proposition~\ref{prop_slicingFn}) gives c) with the constant $\C+|e|+1$. 
\end{proof}

\begin{corollary}\laber{cor_monotf}
Let $F$ be one of $\Psi_{\edge},\Psi_{\vertex},\Psi_{\node}$ (Eqs.~\eqref{eq_edge},\eqref{eq_tripode}, \eqref{eq_node}). Let $e\in\ZZ^2\setminus \{(0,0)\}$. If $F$ is $e$-increasing, then $S_n(F)$ is also $e$-increasing.
\end{corollary}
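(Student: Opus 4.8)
The plan is a straightforward induction on $n$, with the unnumbered Lemma immediately preceding this corollary (``if $F$ is $e$-increasing then $S_1(F)$ is $e$-increasing'') supplying the inductive step.

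\textbf{Base case.} For $n=0$ one has $S_0(F)=F$, since $\Theta_0(F)=\{G:\Delta G\le 0,\ F\le G\le F,\ \dots\}=\{F\}$, so $S_0(F)(v)=\min\{G(v):G\in\Theta_0(F)\}=F(v)$. By hypothesis $F$ is $e$-increasing, so the claim holds for $n=0$. (The case $n=1$ is also immediate from the Lemma applied directly to $F$: since $F=\Psi_{\edge},\Psi_{\vertex}$, or $\Psi_{\node}$ is holeless by Example~\ref{ex_small}, it is a $0$-smoothing of itself and hence qualifies as an $e$-increasing input to the Lemma.)

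\textbf{Inductive step.} Assume $S_n(F)$ is $e$-increasing for some $n\ge 0$; in particular it satisfies part a) of Definition~\ref{def_monotone}, being the $n$-smoothing of the holeless function $F$. By Corollary~\ref{cor_fnmin} (together with Proposition~\ref{prop_slicingFn}), $S_{n+1}(F)$ is exactly the $1$-smoothing of $S_n(F)$. Applying the Lemma with $S_n(F)$ in place of $F$, we conclude that $S_1\big(S_n(F)\big)=S_{n+1}(F)$ is $e$-increasing, which closes the induction.

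The only thing to watch is that the hypotheses of the Lemma are genuinely available at each stage: the inductive hypothesis that $S_n(F)$ is $e$-increasing already packages conditions a)--c), and the identification of the output $S_1(S_n(F))$ with $S_{n+1}(F)$ is precisely the content of Corollary~\ref{cor_fnmin}. Neither is a real obstacle, so there is no substantive hard part here; all the geometric work — controlling the deviation set and verifying the convexity-type condition c) — has already been done inside the proof of the Lemma, and the corollary is simply its iterate along the chain $F\to S_1(F)\to S_2(F)\to\cdots\to S_n(F)$.
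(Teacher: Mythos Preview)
Your argument is correct and matches the paper's intended reading: the corollary is stated without proof precisely because it is the obvious induction on $n$, using the preceding unnumbered Lemma for the step and Corollary~\ref{cor_fnmin} to identify $S_{n+1}(F)$ with $S_1(S_n(F))$. Your remark that condition~a) of Definition~\ref{def_monotone} is carried along automatically (since every $S_n(F)$ is by construction a smoothing of the holeless $F$) is exactly the point the paper relies on implicitly.
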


\m{concrete example}

The following remark follows from the definition of smoothing.
\begin{remark}
\laber{rem_linear}
Let $F:\ZZ^2\to\ZZ, p,q,r\in\ZZ$. Let $G(x,y)=F(x,y)-px-qy-r$. Then $S_n(F)(x,y)-(px+qy+r) = S_n(G)(x,y)$.
\end{remark}
\m{change of coordinate}

%
%
%
%

\begin{lemma}
\laber{lem_alldirection}
Let $F$ be $\Psi_\vertex$ or $\Psi_\node$. Then there exists $k>0$ that for each $n>0$ for each square $S$ of size $k\times k$ inside the set $\{F\ne S_n(F)\}$ the function $S_n(F)$ is not linear on $S$.
\end{lemma}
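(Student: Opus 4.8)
\section*{Proof proposal for Lemma~\ref{lem_alldirection}}

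The plan is to argue by contradiction: fix a constant $k=k(F)$ (to be chosen large in terms of $F$ only), suppose that for some $n$ there is a $k\times k$ square $S\subseteq\{F\ne S_n(F)\}$ on which $S_n(F)$ coincides with an integer affine function $L$, and produce a contradiction. First I would record standing reductions. Since $\Delta S_n(F)\le0$ and, by the same ``decrease at one point'' argument as in Remark~\ref{rem_theta} (here using $\Delta F\ge-3$ for $F=\Psi_\vertex,\Psi_\node$, together with $\Delta S_n(F)\ge\Delta F$ at points where $S_n(F)=F-n$), also $\Delta S_n(F)\ge-3$, the function $\phi=3+\Delta S_n(F)$ is a stable sandpile state; ``$S_n(F)$ linear on $S$'' in particular means $\phi\equiv3$ on the interior of $S$. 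Write $F=\min_{i\in A}\ell_i$ with $\ell_i(x,y)=p_ix+q_iy+a_i$; for $\Psi_\vertex$ (resp.\ $\Psi_\node$) the nonconstant gradients $\nabla\ell_i$ are $(p_1,q_1),(p_2,q_2)$ (resp.\ together with $(p_1,q_1)+(p_2,q_2)$), with $p_1q_2-p_2q_1=\pm1$, and $D(F)$ lies within an absolute distance of the corner locus of this tropical polynomial, i.e.\ of a finite union of rays emanating from a bounded set. I would also keep in mind that, by the proof of Lemma~\ref{lem_finiteneigh}, $\{S_n(F)=F-n\}\subseteq B_{\C}(D(F))$ with $\C$ the holeless constant of $F$ (not growing with $n$).

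The core step is to pin down $\nabla L$. Let $\mathcal C=\{e\in\ZZ^2:\ \nabla\ell_i\cdot e\ge0\text{ for all }i\}$; because the ``node'' gradient is $\nabla\ell_1+\nabla\ell_2$, this equals $\{e:\ \nabla\ell_1\cdot e\ge0,\ \nabla\ell_2\cdot e\ge0\}$, a full‑dimensional cone whose dual is $\mathrm{cone}\{\nabla\ell_1,\nabla\ell_2\}$. For $e$ in the interior of $\mathcal C$, $F$ is $e$‑increasing in the sense of Definition~\ref{def_monotone} (every linear piece is strictly increasing along $e$, so a flat backward run must sit in $\{F=0\}$, whose relative boundary lies in $D(F)$, giving the convexity condition~c) with an absolute constant); hence by Corollary~\ref{cor_monotf} so is $S_n(F)$. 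Applying $e$‑monotonicity of $S_n(F)$ to pairs $v,v+e$ inside $S$, for $e$ ranging over $\mathcal C^{\circ}\cap\ZZ^2$ with $|e|_\infty<k$ — which, once $k$ exceeds the size of a generating set of $\mathcal C\cap\ZZ^2$, is enough to detect the whole cone — forces $\nabla L\cdot e\ge0$ for all $e\in\mathcal C$, i.e.\ $\nabla L\in\mathrm{cone}\{\nabla\ell_1,\nabla\ell_2\}$; by unimodularity $\nabla L=\alpha\nabla\ell_1+\beta\nabla\ell_2$ with $\alpha,\beta\in\ZZ_{\ge0}$. Now I would run the same argument for each $F-\ell_i$, which (a direct check, cf.\ Example~\ref{ex_small}) is again of type $\Psi_\vertex$ resp.\ $\Psi_\node$, has the same deviation set, so $S$ still lies in its changed region, and by Remark~\ref{rem_linear} satisfies $S_n(F-\ell_i)=S_n(F)-\ell_i$, hence has gradient $\nabla L-\nabla\ell_i$ on $S$. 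The cone constraint for these shifted configurations gives $\alpha+\beta\le1$, so $\nabla L\in\{0,\nabla\ell_1,\nabla\ell_2\}$, i.e.\ $L=\ell_{i_0}+\gamma$ for some piece $\ell_{i_0}$ of $F$ and some $\gamma\in\ZZ$; since $L<F\le\ell_{i_0}$ on $S$, $\gamma\le-1$. Subtracting $\ell_{i_0}$ once more via Remark~\ref{rem_linear}, I am reduced to $F=\min(0,\ell_1,\dots)$ and $S_n(F)\equiv\gamma\le-1$ on $S$, with $S\subseteq\{F\ne S_n(F)\}$ (so $\gamma+1\le F\le0$ on $S$).

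It remains to contradict the minimality of $S_n(F)$ in $\Theta_n(F)$. The idea is to subtract from $S_n(F)$ the indicator of a bounded set $T\supseteq\mathrm{int}(S)$: one checks directly that $S_n(F)-\chi_T$ is superharmonic provided every boundary vertex $v$ of $T$ satisfies $\Delta S_n(F)(v)\le-\#\{\text{neighbours of }v\text{ outside }T\}$, i.e.\ sits on a site of $D(S_n(F))$ of suitably low $\phi$; and $F-n\le S_n(F)-\chi_T$ holds as long as $T$ avoids $\{S_n(F)=F-n\}\subseteq B_\C(D(F))$. Since $\phi\equiv3$ on $\mathrm{int}(S)$, $T$ cannot close up inside $S$; following the ``flat channel'' outward and using that $S_n(F)$ is $e$‑increasing (so the channel is convex and runs back into $B_\C(D(F))$), together with the holeless control of Lemma~\ref{lem_finiteneigh} and Corollary~\ref{cor_grows}, one either closes $T$ along $D(S_n(F))$ within a finite neighbourhood of $D(F)$ — producing $G=S_n(F)-\chi_T\in\Theta_n(F)$ strictly below $S_n(F)$, contradicting minimality — or the channel never closes, which forces $S_n(F)$ to equal an integer affine function ($\ell_{i_0}+\gamma$, i.e.\ $\gamma$ after normalization) on a set meeting all rays of $D(F)$; the latter is impossible for $\Psi_\vertex,\Psi_\node$ because $F$ equals $0$ along two of the boundary rays and equals a linear piece decreasing to $-\infty$ along the remaining ray(s), so no single integer affine function stays $\le F-1$ on all of them.

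I expect the last paragraph to be the main obstacle: turning ``follow the flat channel and close it up'' into a clean construction of $T$ with $\partial T\subseteq D(S_n(F))$, $T$ bounded and contained in a finite neighbourhood of $D(F)$, requires care precisely near the branch vertex of $D(F)$ (the two branch vertices in the node case), where several fattened rays meet and where the holeless hypothesis (Definition~\ref{def_holeless}) is doing the work; it also has to be argued uniformly in $n$, which is why the slope‑pinning of the second paragraph, the bound $\{S_n(F)=F-n\}\subseteq B_\C(D(F))$, and Corollary~\ref{cor_grows} are brought in. A possible cleaner alternative to the ``close up the channel'' step is to combine the slope analysis with the observation that a flat $k\times k$ square together with $\mathcal C$‑monotonicity propagates flatness along the whole fattened ray, so that a single integer affine function would have to lie below $F-1$ on an unbounded, ``two‑sided'' portion of $D(F)$ — again impossible by the shape of $F$.
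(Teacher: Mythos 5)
The first half of your argument is essentially the paper's own: you associate to each linear piece its gradient, so that $\Psi_\vertex$ (resp.\ $\Psi_\node$) gives a unimodular triangle (resp.\ parallelogram) $\Delta$; you move each vertex of $\Delta$ to the origin by Remark~\ref{rem_linear}, use $e$-monotonicity of $S_n(F)$ (Corollary~\ref{cor_monotf}) applied to pairs of points inside the $k\times k$ square to force the gradient of the affine function $L$ into the cone of $\Delta$ at that vertex, intersect over all vertices to get $\nabla L\in\Delta\cap\ZZ^2$, hence a vertex of $\Delta$, and then normalize so that $S_n(F)$ is a negative integer constant on $S$. This matches the paper step by step (one harmless slip: in the node case the conclusion is not ``$\alpha+\beta\leq 1$'' --- $\nabla L$ may also be $\nabla\ell_1+\nabla\ell_2$ --- but that is again the gradient of a piece of $F$, so the reduction to ``$S_n(F)$ constant $<0$ on $S$'' is unaffected).

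The endgame is where you diverge, and it contains a genuine gap that you yourself flag. You propose to contradict the minimality of $S_n(F)$ in $\Theta_n(F)$ by exhibiting a bounded set $T\supseteq\mathrm{int}(S)$ with $S_n(F)-\chi_T$ superharmonic, which requires every boundary vertex $v$ of $T$ to satisfy $\Delta S_n(F)(v)\leq-\#\{\text{neighbours of }v\text{ outside }T\}$, while $T$ avoids $\{S_n(F)=F-n\}$. Nothing in your argument produces such a closed contour: $D(S_n(F))$ need not surround the flat square by sites of sufficiently negative Laplacian, and the dichotomy ``either the channel closes up near $D(F)$, or it never closes and an affine function lies below $F-1$ along all rays'' is asserted rather than proved; the alternative ``flatness propagates along the whole fattened ray'' is likewise only a hope. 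The paper needs none of this: once $S_n(F)\equiv m_3<0$ on $S$, it looks at the level set $\{S_n(F)=m_3\}$ and a direction $e$ in which $F$ (hence $S_n(F)$) is increasing and along which every point of this level set eventually reaches $\{S_n(F)=0\}$; starting from the centre of $S$ and moving along $e$ one finds a vertex $v$ of the level set whose neighbours all have value $\geq m_3$ with at least one value $>m_3$, so $\Delta S_n(F)(v)>0$, contradicting superharmonicity directly --- no new element of $\Theta_n(F)$ is constructed. To complete your proposal you would either have to actually build $T$ (uniformly in $n$, including near the branch vertices of $D(F)$), which looks harder than the lemma itself, or replace that step by this local minimum-principle argument on the constant level set.
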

\begin{proof}\m{very crucial lemma}
 We associate each linear function $px+qy+c$ in $F$ with the point $(p,q)\in\ZZ^2$. Then, a triangle of area $1/2$ is associated to $\Psi_\vertex$ and a parallelogram of area $1$ is associated to $\Psi_\node$. Pick any vertex of such a polygon (triangle or parallelogram) $\Delta$. Using Remark~\ref{rem_linear} we may suppose that this point is $(0,0)$. Then, such a function $F$ is $e$-monotone for all $e$ in the dual cone for the cone at $(0,0)$ in $\Delta$ (the dual cone is the set of vectors which have a non-negative scalar product with vectors from a given cone). By Corollary~\ref{cor_monotf}, $S_n(F)$ is also monotone in this direction. Therefore, if $S_n(F)$ is $m_1x+m_2y+m_3$ on $S$, the point $(m_1,m_2)$ must belong to the cone at $(0,0)$ in $\Delta$. By doing that for each vertex of $\Delta$ we obtain that $(m_1,m_2)$ belongs to $\Delta$. To be able to do that we need to assume that $k$ is big enough, namely, bigger than twice the length of each primitive vector from the edges of the dual cones considered above (this is a finite set of vectors).

Then, the fact that $(m_1,m_2)\in \Delta$ (and hence $(m_1,m_2)$ is a vertex of $\Delta$) contradicts the superharmonicity of $S_n(F)$. Namely, by Remark~\ref{rem_linear} we may assume that $(m_1,m_2)=(0,0)$ and $F$ is given as in \eqref{eq_tripode} or \eqref{eq_node}. Therefore $S_n(F)$ is a constant $m_3<0$ on $S$. Consider the set $\{S_n(F) = m_3\}$. There is a direction $e$ such that $F$ is $e$-increasing and for each $v\in \{S_n(F) = m_3\}$ we have that $v+le$ belongs to $\{S_n(F)=0\}$ for some $l\in \ZZ_{>0}$. Then we go from the center of the set $\{S_n(F) = m_3\}$ in the direction $e$, and we find a vertex $v\in \{S_n(F) = m_3\}$ such that $S_n(F)\geq m_3$ at all neighbors of $v$ and $S_n(F)> m_3$ at one of the neighbors, and this contradicts to the superharmonicity of $S_n(F)$. 
%
%
\end{proof}

For $\Psi_\edge$ the similar result holds, see Lemma~\ref{lemma_edge} (perhaps, with a more direct proof).

\section{Discrete superharmonic integer-valued functions.}


\m{this is a section for auxilirly statements}

Throughout the paper we denote all absolute constants by $\C$, when we want to stress that $\C$ depends on other constants such as $k,\dots$ we write it as $\C({k,\dots})$ correspondingly. We will also omit writing ``there is an absolute constant $\C$ with the following property...''.

\begin{lemma}(\cite{duffin}, Theorem 5)
\laber{lemma_duffin} 
Let $R>1,v\in\ZZ^2$, and $F:B_R(v)\cap \ZZ^2\to\RR$ be a discrete {\bf non-negative} harmonic function. Let $v'\sim v$, then $$|F(v')-F(v)|\leq \frac{\C\cdot \max_{v\in B_R(v)} F(v)}{R}.$$
\end{lemma}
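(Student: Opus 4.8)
The plan is to reduce the statement to a gradient estimate for the discrete Poisson kernel, in the same way that in the continuous setting a nonnegative harmonic function on a ball has gradient at an interior point controlled by its size divided by the radius. We may assume $R$ is larger than an absolute constant, since for bounded $R$ the crude bound $|F(v')-F(v)|\le 4F(v)-0=4F(v)\le 4\max_{B_R(v)}F$ (which follows from harmonicity and nonnegativity) already gives the claim with $\C\ge 4R$. In particular we may assume $v'\in B_{R-1}(v)$.

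First I would invoke the probabilistic representation. Let $(X_m)$ be simple random walk on $\ZZ^2$, let $\tau$ be its first exit time from $B_{R-1}(v)\cap\ZZ^2$, and set $P_R(w,z)=\mathbb P_w[X_\tau=z]$ for the exit (Poisson) kernel. Since $F$ is nonnegative and harmonic on $B_R(v)$, optional stopping gives $F(w)=\sum_z P_R(w,z)F(z)$ for every $w\in B_{R-1}(v)$, in particular for $w=v$ and $w=v'$. Subtracting,
$$|F(v')-F(v)|\le\sum_z\big|P_R(v',z)-P_R(v,z)\big|\,F(z)\le\Big(\max_{B_R(v)}F\Big)\sum_z\big|P_R(v',z)-P_R(v,z)\big|,$$
so it suffices to prove $\sum_z\big|P_R(v',z)-P_R(v,z)\big|\le\C/R$, i.e. that the exit distributions from two neighbouring interior points are close in total variation, with the sharp rate $1/R$.

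The heart of the matter is thus this kernel estimate; in fact the stronger pointwise bound $\big|P_R(v',z)-P_R(v,z)\big|\le(\C/R)\,P_R(v,z)$ holds, and is precisely the content of \cite{duffin}, Theorem~5. One route to it is via sharp local-limit-theorem asymptotics for the Poisson kernel of a discrete ball: $P_R(w,z)$ equals, up to a multiplicative factor $1+O(1/R)$, the continuous Poisson kernel of the Euclidean ball at the corresponding points, and the latter changes only by a factor $1+O(|w-w'|/R)$ under a shift of size $|w-w'|=1$ of the interior argument. I expect this kernel comparison to be the main obstacle: a soft mirror-coupling of the two walks until they coalesce only yields a total-variation bound of order $1/\log R$, because the two-dimensional difference walk requires time $\asymp R^2$ to leave the ball but returns to the origin within that window only with logarithmically small failure probability — insufficient for the claimed $1/R$. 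The genuinely local, non-probabilistic comparison of Poisson kernels is what produces the sharp exponent.

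Granting the kernel estimate, the proof concludes at once:
$$|F(v')-F(v)|\le\sum_z\big|P_R(v',z)-P_R(v,z)\big|\,F(z)\le\frac{\C}{R}\sum_z P_R(v,z)\,F(z)=\frac{\C}{R}\,F(v)\le\frac{\C}{R}\max_{B_R(v)}F,$$
which is the asserted inequality (indeed with the stronger right-hand side $F(v)$ in place of $\max_{B_R(v)}F$). Since the statement is quoted verbatim from \cite{duffin}, one may alternatively just cite it; the above is the argument one would give to reprove it.
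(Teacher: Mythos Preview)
The paper gives no proof of this lemma at all: it is stated with the attribution ``(\cite{duffin}, Theorem~5)'' and used as a black box. Your proposal is therefore not being compared against an argument but against a bare citation, and your own final sentence (``one may alternatively just cite it'') is exactly what the paper does.

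Your sketch is a reasonable outline of how one would reprove the result, but note a small circularity: you reduce the gradient estimate for $F$ to the pointwise bound $|P_R(v',z)-P_R(v,z)|\le(\C/R)P_R(v,z)$ and then say this bound ``is precisely the content of \cite{duffin}, Theorem~5.'' But Theorem~5 of \cite{duffin} \emph{is} the statement of the lemma, i.e.\ the gradient estimate for an arbitrary nonnegative harmonic function; the Poisson kernel $w\mapsto P_R(w,z)$ is just one such function, so invoking Duffin for the kernel is the same as invoking Duffin for $F$ directly. If you actually want to \emph{reprove} Duffin rather than cite him, you need an independent derivation of the kernel gradient bound --- e.g.\ via the explicit asymptotics of the discrete Green function (as in the functions $G_v$ of Definition~\ref{def_poisson}) and a discrete Poisson integral formula, or via Duffin's original difference-equation methods. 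Your remark that a soft mirror-coupling only yields $1/\log R$ in dimension two is correct and is indeed the reason a purely probabilistic argument does not suffice here.
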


Morally, this lemma provides an estimate on a derivative of a discrete harmonic function. We call $\partial_xF(x,y)=F(x+1,y)-F(x,y)$ the {\it discrete derivative} of $F$ in the $x$-direction. The derivative $\partial_y$ in the $y$-direction is defined in a similar way. We denote by $\partial_\bullet F$ the discrete derivative of a function $F$ in any of directions $x$ or $y$.

\begin{lemma}[Integer-valued discrete harmonic functions of sublinear growth]
\laber{lemma_harmonic} 
Let $v\in\ZZ^2$ and $\mu>0$ be a constant. Let $R>4\mu \C$. For a discrete {\bf integer-valued} harmonic function $F:B_{3R}(v)\cap\ZZ^2\to\ZZ$, the condition $|F(v')|\leq \mu R$ for all $v'\in B_{3R}(v)$ implies that $F$ is linear in $B_{R}(v)\cap \ZZ^2$. 
\end{lemma}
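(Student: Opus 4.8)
\textbf{Proof plan for Lemma~\ref{lemma_harmonic}.}

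The plan is to reduce the integer-valued statement to the real-valued derivative bound of Lemma~\ref{lemma_duffin}, exploiting that the discrete derivative $\partial_\bullet F$ is itself a harmonic function. First I would observe that since $F$ is harmonic on $B_{3R}(v)$, each of $\partial_x F$ and $\partial_y F$ is harmonic on $B_{3R-1}(v)$, say on $B_{2R}(v)$ to be safe. To apply Lemma~\ref{lemma_duffin} to $\partial_\bullet F$ I need it to be non-negative, so I would first replace $F$ on the ball $B_{2R}(v)$ by $F - \min_{B_{2R}(v)} F$ (still harmonic, and its oscillation is at most $2\mu R$ by hypothesis) and then consider the nonnegative harmonic function $G = \partial_\bullet F + c$ where $c$ is chosen so that $G \geq 0$ on $B_{2R}(v)$; the needed shift $c$ is bounded by the oscillation of $F$, hence $c = O(\mu R)$, and also $\max_{B_{2R}(v)} G = O(\mu R)$ since a single discrete derivative of $F$ is controlled by the oscillation of $F$.

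Next I would apply Lemma~\ref{lemma_duffin} to $G$ on, say, balls of radius $R$ centered at points of $B_R(v)$: for any $w \in B_R(v)$ and any neighbor $w' \sim w$ we get $|G(w') - G(w)| \leq \C \cdot \max_{B_R(w)} G / R = \C' \mu$, hence $|\partial_\bullet F(w') - \partial_\bullet F(w)| \leq \C' \mu$ for all such $w$. In particular the second discrete differences of $F$ are bounded by an absolute multiple of $\mu$ throughout $B_R(v)$. But $F$ is integer-valued, so $\partial_\bullet F$ is integer-valued, so its second differences are integers; more importantly, I want to iterate: the key point is that $\partial_{\bullet\bullet} F$ (a second discrete derivative) is again harmonic on a slightly smaller ball and integer-valued, and the bound just obtained says it is $O(\mu)$ in sup-norm on $B_R(v)$. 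Applying Lemma~\ref{lemma_duffin} once more — now to the nonnegative shift of $\partial_{\bullet\bullet} F$, whose max over $B_{R/2}(w)$ is $O(\mu)$ — gives that the discrete derivative of $\partial_{\bullet\bullet}F$ has absolute value at most $\C'' \mu / (R/2) = \C''' \mu / R$. Since $R > 4\mu\C$, for the right choice of the constant $\C$ in the hypothesis this bound is strictly less than $1$; being an integer, the third discrete difference of $F$ vanishes identically on a ball of radius $\sim R/2$, i.e. $\partial_{\bullet\bullet}F$ is constant there.

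Finally I would promote "second differences constant on $B_{R/2}(v)$" to "$F$ is linear on $B_R(v)$". Once $\partial^2_{xx}F$, $\partial^2_{xy}F$, $\partial^2_{yy}F$ are all constant on a ball, $F$ agrees there with a quadratic polynomial in $(x,y)$; harmonicity forces $\partial^2_{xx}F + \partial^2_{yy}F = 0$, and I would argue that the remaining quadratic part must vanish too — otherwise $F$ would grow like $R^2$ on $B_{3R}(v)$, contradicting the hypothesis $|F| \leq \mu R$ once $R$ is large relative to $\mu$. (Concretely, a nonzero harmonic quadratic such as $x^2 - y^2$ or $xy$ has oscillation $\gtrsim R^2$ on a ball of radius $R$, which beats the linear bound $2\mu R$ when $R > \C\mu$.) Hence $F$ is linear on $B_R(v)$, possibly after shrinking the radius by an absolute factor which can be absorbed by adjusting constants, or by running the argument with $3R$ replaced throughout by a larger absolute multiple of $R$. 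The main obstacle I anticipate is bookkeeping: tracking how the radius shrinks at each differentiation step and how the constants accumulate, so that at the end the single clean hypothesis $R > 4\mu\C$ (with $\C$ an absolute constant to be fixed) suffices to push the relevant integer-valued quantity below $1$; this is routine but must be done carefully to make sure no step needs a ball larger than $B_{3R}(v)$.
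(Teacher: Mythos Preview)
Your plan works, but it takes a detour the paper avoids. The paper applies Duffin's lemma first to $F+\mu R$ itself (nonnegative, harmonic, bounded by $2\mu R$ on $B_{3R}(v)$), which immediately gives $|\partial_\bullet F|\le 2\mu\C$ on $B_{2R}(v)$ --- a bound \emph{independent of $R$}. One more application to $\partial_\bullet F+2\mu\C$ then yields $|\partial_{\bullet\bullet}F|\le 4\mu\C/R<1$ on $B_R(v)$; since the second differences are integers they vanish, and $F$ is linear. Two applications, no third differences, no quadratic part to eliminate.

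You instead start from the trivial bound $|\partial_\bullet F|\le 2\mu R$ and apply Duffin to $\partial_\bullet F$; this loses a factor of $R$ relative to the paper's first step, forcing a third iteration (to kill third differences) and then a separate growth argument to rule out a nonzero harmonic quadratic. That growth argument is fine --- on $B_R(v)$ an integer harmonic quadratic with a nonzero second-difference coefficient attains values of order $R^2$, contradicting $|F|\le\mu R$ once $R>\C\mu$ --- and with a slightly more careful allocation of intermediate radii (e.g.\ $3R\to 2R\to R$ rather than $2R\to R\to R/2$) you can land the conclusion on $B_R(v)$ rather than $B_{R/2}(v)$. So nothing is wrong; you simply missed that applying Duffin to $F$ directly already gives the sharp $O(\mu)$ first-derivative bound, which makes the rest immediate.
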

\begin{proof} Consider $F$ which satisfies the hypothesis of the lemma.
Note that $0\leq F(v')+\mu R\leq 2\mu R$ for $v'\in B_{3R}(v)$ and applying Lemma~\ref{lemma_duffin} for $B_{2R}(v)$ yields $$|\partial_\bullet F(v')|\leq \frac{\C\cdot 2\mu R}{R}=2\mu \C \text{,\ for all $v'\in B_{2R}(v)$}.$$ Then, applying it again for $0\leq \partial_\bullet F(v')+2\mu C\leq 4\mu C$ yields
\begin{equation*}
|\partial_{\bullet}\partial_{\bullet}F(v')|\leq \frac{4\mu \C}{R}<1 \text{,\ for\  $v'\in B_{R}(v)$\ if\ $R>4\mu \C$.}
\end{equation*}  Since $F$ is integer-valued, all the derivatives $\partial_{\bullet}\partial_{\bullet}F$ are also integer-valued. Therefore all the second derivatives of $u$ are identically zero in $B_{R}(v)$, which implies that $F$ is linear in $B_{R}(v)$.
\end{proof}

 Let $A$ be a finite subset of $\ZZ^2$, $\partial A$ be the set of points in $A$ which have neighbors in $\ZZ^2\setminus A$. Let $F$ be any function $A\to \ZZ$.
 
\begin{lemma}
\laber{lemma_nabla} In the above hypothesis the following equality holds: 
$$\sum_{v\in A\setminus\partial A} \Delta F(v) = \sum_{\substack{v\in \partial A,\\ v'\in A\setminus\partial A, v\sim v'}} \big(F(v)-F(v')\big).$$
\end{lemma}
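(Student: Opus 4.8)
The plan is to expand the discrete Laplacian edge by edge and observe that the resulting sum over directed edges telescopes, so that only the edges joining the interior $A\setminus\partial A$ to the boundary $\partial A$ survive. This is a discrete ``summation by parts'' identity and needs no analytic input.

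First I would note that for any $v\in A\setminus\partial A$ all four neighbors of $v$ lie in $A$ (this is exactly the definition of $\partial A$), so $\Delta F(v)=\sum_{w\sim v}\bigl(F(w)-F(v)\bigr)$ is well defined using only values of $F$ on $A$. Summing over all interior vertices,
$$\sum_{v\in A\setminus\partial A}\Delta F(v)=\sum_{v\in A\setminus\partial A}\ \sum_{w\sim v}\bigl(F(w)-F(v)\bigr),$$
which is a sum over all directed edges $(v,w)$ whose tail $v$ is interior; the head $w$ is then automatically an element of $A$ (again because an interior vertex has no neighbor outside $A$).

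Next I would group these directed edges by their underlying undirected edge $\{a,b\}$, with $a,b\in A$. If $a,b\in A\setminus\partial A$, the edge contributes $\bigl(F(b)-F(a)\bigr)$ (from the tail $a$) and $\bigl(F(a)-F(b)\bigr)$ (from the tail $b$), which cancel. If $a,b\in\partial A$, the edge contributes nothing, since neither endpoint is an interior vertex. If exactly one endpoint is interior, say $a\in A\setminus\partial A$ and $b\in\partial A$, then only the tail $a$ contributes, giving exactly $\bigl(F(b)-F(a)\bigr)$. Summing the surviving terms, the left-hand side equals $\sum\bigl(F(v)-F(v')\bigr)$ over pairs with $v\in\partial A$, $v'\in A\setminus\partial A$, $v\sim v'$, which is the right-hand side.

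The only point requiring care — and it is the only ``obstacle'' here — is the combinatorial bookkeeping: one must invoke the definition of $\partial A$ to rule out edges with one endpoint interior and the other outside $A$, and to check that each interior--boundary edge is counted exactly once, with the sign $F(\text{boundary})-F(\text{interior})$. Everything else is the routine telescoping described above.
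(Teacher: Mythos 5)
Your proof is correct, and it is essentially the paper's own argument: the paper likewise expands $\sum_{v\in A\setminus\partial A}\Delta F(v)$ by definition and observes that all contributions cancel except those along interior--boundary adjacencies, which you simply carry out with more explicit edge-by-edge bookkeeping.
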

\begin{proof}
We develop left side by definition of $\Delta F$. All the terms $F(v)$, except for the vertices $v$ near $\partial A$, cancel each other. So we conclude by a direct computation.
\end{proof}

\begin{definition}
\laber{def_poisson}
For $v\in\ZZ^2$ we denote by $G_v:\ZZ^2\to\RR$ the function with the following properties: 
\begin{itemize}
\item $\Delta G_v(v)=1$,
\item $\Delta G_v(w)=0$ if $w\ne v$,
\item $G_v(v)=0$,
\item $G_v(w)= \frac{1}{2\pi}\log|w-v|+\C+O\left(\frac{1}{|w-v|^2}\right)$ when $|w-v|\to\infty$, where $c$ is some constant.
\end{itemize}
It is a classical fact that $G_v$ does exist and is unique (\cite{MR0040555}, (15.12), or~\cite{MR2677157}, p.104, see~\cite{MR1415236}, Remark 2, for more terms in the Taylor expansion).
\end{definition}
\begin{corollary}
\label{cor_poisson}
Let $v=(0,0)$. By a direct calculation we conclude that $$|\partial_\bullet\partial_\bullet G_v(x,y)|\leq \frac{\C}{(x^2+y^2+1)}.$$ 
\end{corollary}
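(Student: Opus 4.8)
The plan is to use the asymptotic expansion of $G_v$ recorded in Definition~\ref{def_poisson} and to split $\ZZ^2$ into a bounded region around the origin, where every quantity is finite, and a far region, where the expansion is in force. Throughout, $\partial_\bullet\partial_\bullet$ denotes one of $\partial_x\partial_x$, $\partial_x\partial_y$ (which equals $\partial_y\partial_x$), $\partial_y\partial_y$; in each case $\partial_\bullet\partial_\bullet G_v(x,y)$ is a $\pm1$-combination of the values of $G_v$ at the three or four lattice points obtained from $w=(x,y)$ by adding $0$, $(1,0)$, $(0,1)$, $(2,0)$ or $(0,2)$ as appropriate, all of which lie within distance $2$ of $w$. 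The three cases are handled identically, so I describe $\partial_x\partial_y$.

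Fix $R_0,\C_1>0$ such that $G_v(w)=\tfrac1{2\pi}\log|w|+\C+\rho(w)$ with $|\rho(w)|\le \C_1/|w|^2$ for $|w|\ge R_0$, and write $\ell(w)=\tfrac1{2\pi}\log|w|$. Assume first $|w|\ge R_0+2$, so that all four points $w$, $w+(1,0)$, $w+(0,1)$, $w+(1,1)$ have norm between $|w|-2$ and $|w|+2$, in particular comparable to $|w|$. In $\partial_x\partial_y G_v(w)$ the additive constant $\C$ cancels, leaving $\partial_x\partial_y\ell(w)+\partial_x\partial_y\rho(w)$. For the smooth part, the elementary identity $\partial_x\partial_y\ell(w)=\int_0^1\!\int_0^1(\partial_1\partial_2\ell)\big(w+(s,t)\big)\,ds\,dt$ (here $\partial_1,\partial_2$ are ordinary partial derivatives), together with the fact that the continuous Hessian of $\ell$ obeys $|\partial_i\partial_j\ell(u)|\le \C/|u|^2$, gives $|\partial_x\partial_y\ell(w)|\le \C/|w|^2$. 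For the error part, $|\partial_x\partial_y\rho(w)|$ is bounded by the sum of the four numbers $|\rho|$, each $\le \C_1/|w|^2$ once the comparability of norms is absorbed, so $|\partial_x\partial_y\rho(w)|\le \C/|w|^2$. Hence $|\partial_x\partial_y G_v(w)|\le \C/|w|^2\le \C/(x^2+y^2+1)$, the last inequality since $x^2+y^2=|w|^2\ge (R_0+2)^2$ is comparable to $x^2+y^2+1$ on this range.

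It remains to treat the finitely many $w$ with $|w|<R_0+2$. On this finite set $|\partial_\bullet\partial_\bullet G_v|$ is bounded by some constant $\C_2$, while $1\le x^2+y^2+1\le (R_0+2)^2+1$, so $|\partial_\bullet\partial_\bullet G_v(w)|\le \C_2\le \C_2\big((R_0+2)^2+1\big)/(x^2+y^2+1)$; this is also where the ``$+1$'' in the denominator is used, namely to keep the bound finite at $w=(0,0)$. Taking $\C$ to be the maximum of the constants produced in the two regions yields the corollary.

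The only delicate point is the treatment of the remainder $\rho$ in the far region: Definition~\ref{def_poisson} controls only the value of $G_v-\ell-\C$, not its derivatives, so one cannot simply differentiate the $O(1/|w|^2)$ term. The argument above avoids this by exploiting that a discrete second difference is already a bounded combination of four nearby function values, so the pointwise bound on $\rho$ suffices; alternatively, one may invoke the sharper expansion of~\cite{MR1415236}, which supplies additional Taylor terms and hence direct control on the derivatives of the remainder.
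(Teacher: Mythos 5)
Your argument is correct and is exactly the ``direct calculation'' the paper alludes to: cancel the constant, bound the second differences of $\tfrac1{2\pi}\log|w|$ by the continuous Hessian via the integral identity, absorb the $O(1/|w|^2)$ remainder by the triangle inequality (which suffices precisely because the target bound is itself $1/|w|^2$), and handle the finitely many points near the origin with the ``$+1$''. Nothing differs in substance from the paper's intended proof, so no further comment is needed.
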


\begin{lemma}
\label{lemma_log}
The following inequality holds for all $N\in\ZZ_{>0}, v\in\ZZ^2$:\m{lemma to cure any function to harmonic}
$$\sum\limits_{-N\leq x,y\leq N} |\partial_\bullet\partial_\bullet G_{v}(x,y)|\leq \C\ln N.$$
\end{lemma}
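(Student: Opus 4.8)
The plan is to use the pointwise decay estimate for the second differences of $G_v$ supplied by Corollary~\ref{cor_poisson}, namely $|\partial_\bullet\partial_\bullet G_v(x,y)| \le \C/(x^2+y^2+1)$, and simply sum it over the square $\{-N\le x,y\le N\}$. By translation invariance of the Laplacian (and since adding a constant to $G_v$ does not affect second differences) it suffices to treat $v=(0,0)$. So the task reduces to showing
\[
\sum_{-N\le x,y\le N}\frac{1}{x^2+y^2+1}\le \C\ln N.
\]

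First I would dyadically decompose the square. Group the lattice points $(x,y)$ with $-N\le x,y\le N$ according to the value of $\max(|x|,|y|)$, or equivalently according to which annulus $2^{k}\le \|(x,y)\|_\infty < 2^{k+1}$ they fall into, for $k=0,1,\dots,\lceil\log_2 N\rceil$. On the $k$-th annulus there are at most $\C\, 2^{k}$ lattice points (the perimeter of a box of side $2^{k}$), and each contributes at most $\C/(2^{2k})$ to the sum since $x^2+y^2+1\ge \|(x,y)\|_\infty^2\ge 2^{2k}$ there. Hence the $k$-th annulus contributes at most $\C\,2^{k}\cdot 2^{-2k}=\C\,2^{-k}$... wait, that is summable, giving $O(1)$; the genuinely logarithmic contribution instead comes from the inner region. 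Let me redo this: the contribution of the $k$-th annulus is $\le \C\cdot 2^{k}\cdot 2^{-2k}$ only if I miscount; in fact the annulus $2^k\le \|\cdot\|_\infty<2^{k+1}$ in $\ZZ^2$ contains $\asymp 2^{2k}$ lattice points (it is a box minus a box, area $\asymp 2^{2k}$), so its contribution is $\le \C\cdot 2^{2k}\cdot 2^{-2k}=\C$, and summing over $k\le \log_2 N$ of these $O(1)$ contributions yields exactly the $\C\ln N$ bound. The $k=0$ (and small-$k$) terms are bounded by an absolute constant and cause no trouble because of the $+1$ in the denominator, which keeps every individual term $\le \C$.

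The only mild subtlety, and the step I would be most careful with, is making the annulus count clean: one should note that the points with $\|(x,y)\|_\infty=m$ number $8m$ for $m\ge 1$ (plus the single point $m=0$), so
\[
\sum_{-N\le x,y\le N}\frac{1}{x^2+y^2+1}
\le 1+\sum_{m=1}^{N}\frac{8m}{m^2+1}
\le 1+8\sum_{m=1}^{N}\frac{1}{m}
\le \C\ln N,
\]
using $x^2+y^2+1\ge \max(|x|,|y|)^2+1\ge m^2+1$ for points on the $m$-th frame, together with the standard harmonic-series estimate $\sum_{m=1}^N 1/m\le 1+\ln N$. Combining this with Corollary~\ref{cor_poisson} (which absorbs the $\C$ from the Green's function estimate into the absolute constant) completes the proof. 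No step here is a serious obstacle; it is purely a dyadic/frame-counting estimate once the pointwise decay of $\partial_\bullet\partial_\bullet G_v$ is in hand.
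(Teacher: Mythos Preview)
Your proof is correct and follows essentially the same route as the paper: both invoke Corollary~\ref{cor_poisson} to reduce to bounding $\sum_{|x|,|y|\le N}(x^2+y^2+1)^{-1}$, which the paper estimates by the polar integral $\int_{1}^{2N} r\,dr/r^2\le \C\ln N$ and you by the equivalent discrete $\ell^\infty$-frame sum $\sum_{m=1}^N 8m/(m^2+1)\le 8\sum_{m\le N}1/m$. One minor caveat: ``translation invariance'' alone does not literally reduce general $v$ to $v=0$ (the summation box $[-N,N]^2$ is fixed, not centered at $v$), but the paper glosses over this point as well, and the fix---noting that the shifted box lies inside $[-3N,3N]^2$ when $v\in[-2N,2N]^2$, and that every term is $O(N^{-2})$ otherwise---is routine.
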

\begin{proof}
The maximum of this sum is attained when $v=(0,0)$. Then the sum is estimated from above by $$\int\limits_{1\leq x^2+y^2\leq 2N^2} \frac{\C\,dxdy}{x^2+y^2} +\C<\C\int_{r=1}^{2N}\frac{rdr}{r^2}+\C\leq \C\ln N.$$
\end{proof}

\begin{lemma}
\laber{lemma_poisson}
Let $k,\mu\in \NN$. \m{key lemma that bound implies linear} For all $N>\C(k)\mu$ the following holds. Let $F$ be any non-negative integer-valued function on $A=\big([0,N]\times [0,N]\big)\cap \ZZ^2$ satisfying $$|F(v)|\leq \mu(|v|+1).$$ Let $v_1,v_2,\dots v_N$  be points in $\ZZ^2$ (not necessary distinct) and suppose that $G=F+\sum_{k=1}^NG_{v_k}$ (see Definition~\ref{def_poisson}) is a discrete harmonic function on $A$. Then there exists a square of size $k\times k$ in $A$ such that $F$ is linear on this square. 
\end{lemma}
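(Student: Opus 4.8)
The plan is to show that if $F$ is nowhere linear on any $k\times k$ square in $A$, then the "second derivative mass" $\sum_{v\in A'}|\partial_\bullet\partial_\bullet F(v)|$ over a slightly smaller box $A'$ must be at least of order $N^2/k^2$ (since on each disjoint $k\times k$ sub-square some second difference is nonzero, and second differences of an integer-valued function have absolute value $\ge 1$ where nonzero). On the other hand, $F=G-\sum_{k=1}^N G_{v_k}$ with $G$ harmonic on $A$, so $\partial_\bullet\partial_\bullet F = \partial_\bullet\partial_\bullet G-\sum_k \partial_\bullet\partial_\bullet G_{v_k}$. The sum of $|\partial_\bullet\partial_\bullet G_{v_k}|$ over the box is, by Lemma \ref{lemma_log}, at most $\C N\ln N$ (each term contributes $\C\ln N$). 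The term $\partial_\bullet\partial_\bullet G$ is controlled using the Duffin-type estimate: first I would use $|F(v)|\le \mu(|v|+1)\le \C\mu N$ and $\sum_k G_{v_k}$ growing like $N\log N$ to bound $|G|$ by $\C\mu N\log N$ on $A$, then apply Lemma \ref{lemma_duffin} twice on balls of radius $\sim N$ around each interior point at distance $\ge N/10$ from $\partial A$ to get $|\partial_\bullet\partial_\bullet G(v)|\le \C\mu\log N/N$ there, hence $\sum|\partial_\bullet\partial_\bullet G|\le \C\mu N\log N$ over the inner box. Comparing, $N^2/k^2 \le \C\mu N\log N$, which fails once $N>\C(k)\mu$ (choosing $\C(k)$ to beat the extra $\log N$ factor, e.g. absorbing $\log N$ since $N^2/(k^2 N\log N) = N/(k^2\log N)\to\infty$), giving the contradiction.

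The main obstacle I expect is the boundary: Lemma \ref{lemma_duffin} needs a ball fully inside the domain where $G$ is harmonic and non-negative, so the interior second-derivative estimate only works at distance $\gtrsim N$ from $\partial A$, i.e. on a box of side $\sim N/2$ rather than all of $A$. This is fine for the counting argument because a box of side $N/2$ still contains $\sim N^2/k^2$ disjoint $k\times k$ squares, so if $F$ were linear on none of them we still get mass $\gtrsim N^2/k^2$ from the interior box alone. A secondary nuisance is that $G$ need not be non-negative; one fixes this by adding a large constant $\C\mu N\log N$ (it doesn't change any second difference) before applying Lemma \ref{lemma_duffin}, and shrinking the radius at the second application so that $\partial_\bullet G+\text{(shift)}$ is non-negative on the smaller ball.

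Concretely, the steps in order: (1) bound $|G|$ on $A$ by $\C\mu N\log N$ using the hypothesis on $F$ and the logarithmic growth of $G_{v_k}$ from Definition \ref{def_poisson}; (2) apply Lemma \ref{lemma_duffin} twice (adjusting by additive constants for non-negativity, shrinking radii) to get $|\partial_\bullet\partial_\bullet G(v)|\le \C\mu\log N/N$ on $A'=[N/4,3N/4]^2\cap\ZZ^2$, hence $\sum_{A'}|\partial_\bullet\partial_\bullet G|\le \C\mu N\log N$; (3) use Lemma \ref{lemma_log} to get $\sum_{A}\sum_{k=1}^N|\partial_\bullet\partial_\bullet G_{v_k}|\le \C N\ln N$; (4) combine via the triangle inequality to get $\sum_{v\in A'}|\partial_\bullet\partial_\bullet F(v)|\le \C\mu N\log N$; (5) argue the contrapositive: if $F$ is non-linear on every $k\times k$ square in $A'$, then partitioning $A'$ into $\sim (N/(2k))^2$ disjoint $k\times k$ squares and noting each carries at least one nonzero — hence $\ge 1$ in absolute value — second difference of the integer-valued $F$, we get $\sum_{v\in A'}|\partial_\bullet\partial_\bullet F(v)|\ge \C N^2/k^2$; (6) for $N>\C(k)\mu$ the inequality $\C N^2/k^2\le \C\mu N\log N$ is violated, a contradiction, so some $k\times k$ square in $A$ supports a linear restriction of $F$.
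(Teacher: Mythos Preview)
Your overall strategy is the paper's: bound the second differences of the harmonic part $G$ via two applications of Lemma~\ref{lemma_duffin}, bound the total second-difference mass of the Green-function part via Lemma~\ref{lemma_log}, and pigeonhole over $k\times k$ sub-squares. The organization differs only cosmetically (you phrase the pigeonhole as a global contrapositive on $\sum|\partial_\bullet\partial_\bullet F|$, whereas the paper finds a single good square directly), but there is one genuine slip that breaks step~(6) as written.

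In step~(1) you bound the oscillation of $G$ on $A$ by $\C\mu N\log N$. This is valid but loose: the oscillation of $F$ is $\le \C\mu N$ and the oscillation of $\sum_k G_{v_k}$ is $\le \C N\log N$, so the correct bound is \emph{additive}, $\C(\mu+\log N)N$, not multiplicative. With your loose bound, step~(4) gives $\sum_{A'}|\partial_\bullet\partial_\bullet F|\le \C\mu N\log N$, and comparing with $N^2/k^2$ from step~(5) you need $N/\log N>\C k^2\mu$. This does \emph{not} follow from $N>\C(k)\mu$: at $N=\C(k)\mu$ one would need $\C(k)/(\log\C(k)+\log\mu)>\C k^2$, which fails for large $\mu$. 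Your remark that ``$N/(k^2\log N)\to\infty$'' only helps for fixed $\mu$, not uniformly. With the tight additive bound instead, step~(4) becomes $\sum_{A'}|\partial_\bullet\partial_\bullet F|\le \C\mu N+\C N\log N$, and then $N/k^2\le \C\mu+\C\log N$ does yield a contradiction for $N>\C(k)\mu$ (the $\log N$ term is absorbed once $N>\C'(k)$).

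The paper sidesteps this bookkeeping by keeping the two pieces separate rather than summing: it uses the \emph{pointwise} bound $|\partial_\bullet\partial_\bullet G|<1/2$ on the inner box $A''$ (this needs only $N>\C\mu$, since $\C(\mu+\log N)/N<1/2$ once $N>\C'\mu$), and applies pigeonhole only to $\sum_i|\partial_\bullet\partial_\bullet G_{v_i}|$, whose total is $\C N\ln N$ independent of $\mu$; finding a $k\times k$ square where this is pointwise $\le 1/3$ then needs only $N/\ln N>\C k^2$, i.e.\ $N>\C(k)$. On that square $|\partial_\bullet\partial_\bullet F|<1$, hence $=0$. This separation is what makes the constant linear in $\mu$ with no extra work.
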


\begin{proof} $$\text{Applying\ Lemma~\ref{lemma_duffin}\ for\ }v\in A'=\left[\frac N5,\frac{4N}5\right]\times\left[\frac N5,\frac{4N}5\right] \text{\ we\ obtain\ $|\partial_\bullet G|\leq \frac{\mu \C N}{N/5}$.}$$  Proceeding as in Lemma~\ref{lemma_harmonic}, we see that in the square $$A''=\left[\frac {2N}5,\frac{3N}5\right]\times\left[\frac {2N}5,\frac{3N}5\right]$$ the second discrete derivatives $\partial_\bullet\partial_\bullet G$ are at most $$\frac{\C\mu }{N}$$ by the absolute value, which is less than $\frac12$ if $N>\C(k)\mu$.


Since $\sum_{w\in A}\partial_\bullet\partial_\bullet G_{v_k}(w)$ is at most $\C\ln N$ (Lemma~\ref{lemma_log}), we obtain by the direct calculation that $$\sum_{k=1}^N\sum_{w\in A} \partial_\bullet\partial_\bullet G_{v_k}(w)\leq \C N\ln N.$$ 

We cut $A''$ on $(\frac N{5k})^2$ squares of size $k\times k$. Therefore for $N>\C(k)\mu$ we can find a square $A'''\subset A''$ of size $k\times k$ such that  $$\sum_{k=1}^N|\partial_\bullet\partial_\bullet G_{v_i}(v)|\leq 1/3 \text{\ at\ every\ point\  $v\in A'''$.}$$ The estimates for $|\partial_\bullet\partial_\bullet G|$ and $\sum_{i=1}^N|\partial_\bullet\partial_\bullet G_{v_i}|$  imply that for all second derivatives of $F$ we have $\partial_\bullet\partial_\bullet F(v)=0$ for $v\in A'''$. Thus $F$ is linear on $A'''$. 
\end{proof}

 
%
%

\section{Estimates on a cylinder}
\label{sec_cylinder}

\begin{definition}
\laber{def_strip}
Let $p,q\in\ZZ,q>1$. We consider the equivalence relation $(x,y)\sim (x+q,y-p)$ on $\ZZ^2$, it respects the graph structure on $\ZZ^2$, so we define a new graph \m{definition of the cylinder} $$\Sigma = \ZZ^2/\sim, \text{\ where\ }\sim \text{ is generated by }  (x,y)\sim (x+q,y-p).$$ We identify $\Sigma$ with the strip $[0,q-1]\times \ZZ$ where each vertex is connected with its neighbors and, additionally, $(0,y)$ is connected with $(q-1,y-p)$ for all $i\in \ZZ$. The concept of discrete harmonic function also easily descends to $\Sigma$.
\end{definition}

Let $G$ be an integer valued function on $\ZZ^2$ satisfying  $$G(i,j)=G(x+q,y-p)\text{\ for\ all\ } x,y\in\ZZ \text{ and\  fixed } p,q>0.$$ The function $G$ naturally descends to $\Sigma$. Let $G$ be an integer valued superharmonic function on $\Sigma$. Suppose that that $0\leq G(x,y)\leq \C y$ for all $y>0, x\in[0,q-1]$. Suppose also that the number of points $v$ with $\Delta G(v)<0$ is finite and denote $\mathcal{D}=\sum_{v\in\Sigma}\Delta G(v)<0$. 

\begin{lemma}
\laber{lemma_stripestimate} Let $G$ as above and $k> |p|+|q|$. Then, for some $m\leq C({k,|\mathcal{D}|})$ the function $G$ is linear on $$\Sigma'=[0,q-1]\times [m,m+k]\subset \Sigma.$$\m{key lemma for cylinder}
\end{lemma}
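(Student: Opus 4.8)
The plan is to transport the Poisson-kernel argument of Lemma~\ref{lemma_poisson} from the square to the cylinder $\Sigma$. First I would write $G = H + \sum_{i} G_{v_i}$, where the $v_i$ are the (finitely many) points of $\Sigma$ at which $\Delta G < 0$, each repeated $|\Delta G(v_i)|$ times, and $G_{v_i}$ is now the Green-type function on the cylinder $\Sigma$ (the analogue of Definition~\ref{def_poisson}: unit Laplacian at $v_i$, harmonic elsewhere, vanishing at $v_i$, logarithmic growth). One must check such $G_{v_i}$ exists on $\Sigma$; this is standard — one can obtain it by summing the planar $G_{(x,y)}$ over the orbit of the $\ZZ$-action $(x,y)\mapsto(x+q,y-p)$, or cite the corresponding classical fact for periodic lattices. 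By construction $H = G - \sum_i G_{v_i}$ is harmonic on all of $\Sigma$ and, crucially, $H$ is integer-valued where it matters (after noting $\sum G_{v_i}$ contributes the non-harmonicity and $G$ is integer-valued); more precisely I would arrange that the relevant quantity $\partial_\bullet\partial_\bullet H = \partial_\bullet\partial_\bullet G - \sum_i \partial_\bullet\partial_\bullet G_{v_i}$ is an integer at the points we examine.

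Next I would get a linear growth bound for $H$. On the region $0\le y\le Y$ of $\Sigma$ we have $0 \le G(x,y)\le \C Y$ by hypothesis; the perturbations $\sum_i G_{v_i}$ are bounded on any fixed compact piece of $\Sigma$ by $\C(|\mathcal D|)\log Y$ for $y$ up to $\C Y$, so $H$ satisfies $|H|\le \C Y$ on a band of height $\C Y$ away from the $v_i$. Here I would pick a band $\Sigma_1 = [0,q-1]\times[a,a+L]$ with $a$ chosen of size $\C(k,|\mathcal D|)$, large enough to be past all the $v_i$, and $L$ comparable to $a$. Applying the Duffin derivative estimate (Lemma~\ref{lemma_duffin}), lifted to $\Sigma$ or applied on overlapping disks that fit inside the universal cover — twice, exactly as in Lemmas~\ref{lemma_harmonic} and~\ref{lemma_poisson} — gives $|\partial_\bullet\partial_\bullet G| \le \C(|\mathcal D|)/L < \tfrac13$ on a sub-band once $L > \C(k,|\mathcal D|)$. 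Combining with Lemma~\ref{lemma_log} applied to each $G_{v_i}$ (the cylinder kernel has the same $\log$-summability since it is built from planar kernels), $\sum_i |\partial_\bullet\partial_\bullet G_{v_i}| \le \C |\mathcal D| \log L$, and since the band has only $q\cdot L$ points while we have $\le q L/k^2$ disjoint $k\times k$ squares inside, a pigeonhole/averaging argument produces a $k\times k$ square $\Sigma'$ on which $\sum_i|\partial_\bullet\partial_\bullet G_{v_i}| < \tfrac13$ at every point. On that square $|\partial_\bullet\partial_\bullet H| < \tfrac23 < 1$, and $\partial_\bullet\partial_\bullet H$ being an integer forces it to vanish, so $H$, hence $G$, is linear on $\Sigma'$. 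The requirement $k > |p|+|q|$ enters when I need a $k\times k$ square in the strip $[0,q-1]\times\ZZ$ to have a well-defined "interior" and to make sense of linearity across the identification $(0,y)\sim(q-1,y-p)$ — a $k\times k$ window of the universal cover maps injectively only if $k$ exceeds the translation length in the relevant directions.

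The main obstacle I expect is bookkeeping around the periodic Green function $G_{v_i}$ on $\Sigma$: establishing its existence, its $\tfrac{1}{2\pi}\log$ asymptotics, and the summability $\sum |\partial_\bullet\partial_\bullet G_{v_i}| \le \C\log L$ uniformly in the base point $v_i$ — because the naive sum over the $\ZZ$-orbit of planar kernels only converges after suitable subtraction of constants, and one must be careful that the second-derivative sums still behave like the planar case. A secondary subtlety is making the constants genuinely depend only on $k$ and $|\mathcal D|$ (not on the individual positions of the $v_i$), which is why I push the band to height $\C(k,|\mathcal D|)$: past that height every $G_{v_i}$ looks like a pure logarithm up to $O(1/y^2)$, so their combined second derivatives are controlled by $|\mathcal D|$ alone, uniformly in where the $v_i$ sit.
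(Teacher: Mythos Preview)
Your approach is genuinely different from the paper's, and considerably more elaborate than what is needed. The paper's proof is a one-line pigeonhole: since $G$ is superharmonic and $\sum_{v\in\Sigma}\Delta G(v)=\mathcal D$, there are at most $|\mathcal D|$ points of $\Sigma$ where $\Delta G\ne 0$. Cutting $[0,q-1]\times[0,N(|\mathcal D|+1)]$ into $|\mathcal D|+1$ horizontal bands of height $N$, one band $A$ contains no defect point at all, so $G$ is \emph{exactly} harmonic on $A$. Then Lemma~\ref{lemma_harmonic} (applied to the periodic lift on $\ZZ^2$, with $R=N/6$) forces the second differences to vanish once $N>\C|\mathcal D|$, and $G$ is linear on a strip of height $\gg k$. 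No Green functions, no averaging over $k\times k$ squares, no Lemma~\ref{lemma_poisson}. You are essentially redoing the harder argument that the paper reserves for $\Psi_\vertex$ and $\Psi_\node$, where the defects cannot be dodged; on the cylinder they can.

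Your route can probably be made to work, but as written it has a real error: the potential kernel $G_{v_i}$ on the cylinder $\Sigma$ does \emph{not} have logarithmic growth. At large $|y|$ the cylinder is effectively one-dimensional, and the function with $\Delta G_{v_i}=\delta_{v_i}$ grows linearly in $|y|$ (think of the potential kernel on $\ZZ$, which is $\tfrac12|x|$); equivalently, summing the planar kernels over the $\ZZ$-orbit $(x+nq,y-np)$ diverges like $\sum_n \log|n|$ even after subtracting constants. So your asymptotic claims and the Lemma~\ref{lemma_log}-type bound need to be reworked from scratch for the cylinder kernel. The second differences of the cylinder kernel do decay (in fact exponentially in $|y|$ for the nonzero transverse Fourier modes), so the argument is salvageable, but it is a different computation from the planar one you invoke, and all of it is avoided by the paper's pigeonhole.
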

\begin{proof}
Choose big $N$. Dissect $[0,q-1]\times[0,N(|\mathcal{D}|+1)]$ on $|\mathcal{D}|+1$ parts \begin{align*}[0,q-1]\times &[0,N] \\ [0,q-1]\times &[N,2N],\text{etc.}\end{align*}
Then there exists  a part $A$ 
 in this dissection where $G$ is discrete harmonic. Note that $$0\leq G|_A\leq \C\cdot (|\mathcal{D}|+1)N.$$ Let $v$ be the center of $A$. Applying Lemma~\ref{lemma_harmonic} for $v$ and $R=N/6$ we prove that derivatives $\partial_\bullet\partial_\bullet G$ are zeros in $B_{N/6}(v)$ if $N>\C|\mathcal{D}|$ and thus $G$ is linear on $B_{N/6}(v)$. If $N/6>2k$ then we found desired $\Sigma'\subset B_{2k}(v)$. So, it was enough to take $\C({k,|\mathcal{D}|} )= \C\cdot (|\mathcal{D}|^2+k)$.
\end{proof}


\begin{lemma}
\laber{lemma_periodicsmoothongs}
Let $F=\Psi_\edge$ (see \eqref{eq_edge}). Then for all $n\in\ZZ_{>0}$ smoothings $S_n(F)$ are periodic in the direction $e=(q_1,-p_1)$, i.e. $S_n(F)(v)=S_n(F)(v+e)$ for all $v\in\ZZ^2$.
\end{lemma}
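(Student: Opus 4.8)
The plan is to exploit the uniqueness built into the definition of the smoothing $S_n(F)$ together with the translation-invariance of the data $F = \Psi_\edge$. Recall $\Psi_\edge(x,y) = \min(0, p_1x+q_1y)$, so that $D(F)$ is (a bounded neighborhood of) the line $\{p_1x+q_1y = 0\}$, and $F(v) = F(v+e)$ for $e = (q_1,-p_1)$ since $p_1q_1 + q_1(-p_1) = 0$. The translation $T_e: v\mapsto v+e$ is a graph automorphism of $\ZZ^2$ that commutes with $\Delta$, preserves $D(F)$ setwise, and satisfies $F\circ T_e = F$. Consequently $G\mapsto G\circ T_e$ is a bijection of the set $\Theta_n(F)$ onto itself: if $G\in\Theta_n(F)$ then $\Delta(G\circ T_e) = (\Delta G)\circ T_e\leq 0$, $F - n = (F-n)\circ T_e \leq G\circ T_e \leq F\circ T_e = F$, and $\{F\ne G\circ T_e\} = T_e^{-1}\{F\circ T_e \ne G\} = T_e^{-1}\{F\ne G\}\subset T_e^{-1}B_\C(D(F)) = B_\C(D(F))$, using $T_e(D(F)) = D(F)$.

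The next step is to transfer this symmetry to the pointwise minimum. Since $S_n(F)(v) = \min\{G(v) : G\in\Theta_n(F)\}$ and $G\mapsto G\circ T_e$ permutes $\Theta_n(F)$, we get for every $v$
\[
S_n(F)(v+e) = \min_{G\in\Theta_n(F)} G(v+e) = \min_{G\in\Theta_n(F)} (G\circ T_e)(v) = \min_{G\in\Theta_n(F)} G(v) = S_n(F)(v),
\]
where the penultimate equality is exactly the fact that $\{G\circ T_e : G\in\Theta_n(F)\} = \Theta_n(F)$. (By Example~\ref{ex_small} and Corollary~\ref{cor_finiteness}, $\Theta_n(F)$ is nonempty and $S_n(F)$ itself lies in it, so the minimum is attained and these manipulations are legitimate.) This is precisely the claimed periodicity in the direction $e = (q_1,-p_1)$.

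I do not expect a genuine obstacle here; the lemma is essentially a formal consequence of the fact that $\Psi_\edge$ and the whole ambient structure are invariant under $T_e$, and that $S_n$ is defined by a symmetric extremal problem. The only point requiring a line of care is checking that the neighborhood condition $\{F\ne G\}\subset B_\C(D(F))$ is preserved under $T_e$, which comes down to $T_e(D(F)) = D(F)$ — true because $D(\Psi_\edge)$ depends only on $p_1x+q_1y$, which $T_e$ fixes. One could alternatively argue via the step-by-step characterization of $S_{n}(F)$ as the $1$-smoothing of $S_{n-1}(F)$ (Corollary~\ref{cor_fnmin}) and induct on $n$, but the direct symmetry argument above is cleaner and avoids any bookkeeping.
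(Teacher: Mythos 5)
Your proof is correct, and it rests on the same underlying fact as the paper's argument: the competitor class $\Theta_n(F)$ of Definition~\ref{def_thetan} is invariant under the translation $T_e$, since $F\circ T_e=F$ and hence $D(F)$ and $B_\C(D(F))$ are preserved. The packaging differs slightly. You use the invariance directly: because $G\mapsto G\circ T_e$ permutes $\Theta_n(F)$, the pointwise minimum defining $S_n(F)$ is itself $T_e$-invariant, which yields $S_n(F)(v)=S_n(F)(v+e)$ in one line, with no auxiliary lemma. The paper instead argues by contradiction: if $S_n(F)(v)>S_n(F)(v+e)$ for some $v$, then $\tilde S(w)=\min\bigl(S_n(F)(w),S_n(F)(w+e)\bigr)$ is superharmonic by Lemma~\ref{lem_minharmonic}, lies in $\Theta_n(F)$ (the bounds $F-n\le\tilde S\le F$ and the support condition again use $F\circ T_e=F$), and is strictly smaller than $S_n(F)$ at $v$, contradicting the minimality of $S_n(F)$; strictly speaking this rules out only one inequality, the reverse one being handled symmetrically with $-e$. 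Your equivariance formulation is marginally cleaner in that it gives both inequalities at once and avoids Lemma~\ref{lem_minharmonic}; the paper's min-with-translate device is the same translation invariance in disguise, so the two proofs are essentially equivalent in content.
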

\begin{proof}
Suppose, to the contrary, that $S_n(F)(v)>S_n(F)(v+e)$ for some $v\in\ZZ^2$. It follows from Lemma~\ref{lem_minharmonic} that $\tilde S_n(F)(w)=\min(S_n(F)(w),S_n(F)(w+e))$ belongs to $\Theta_n(F)$, but 
$\tilde S_n(F)(v)<S_n(F)(v)$ which contradicts to the minimality of $S_n(F)$ in $\Theta_n(F)$.
\end{proof}

\begin{lemma}[cf. Lemma~\ref{lem_alldirection}]
\laber{lemma_edge}
Let $F=\min(px+qy,0)$ and $\Sigma$ as above, note that $F$ descends to $\Sigma$. Let $$A\subset \{F\ne S_n(F)\}, A=[0,q-1]\times [m,m+|p|+|q|].$$ Suppose that $S_n(F)$ restricted to $A$ is linear. Then $\mathrm{gcd}(p,q)>1$.
\end{lemma}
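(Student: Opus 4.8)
The plan is to argue by contradiction: assume $\gcd(p,q)=1$ and that $S_n(F)$ is linear on a full horizontal slab $A=[0,q-1]\times[m,m+|p|+|q|]$ of the cylinder $\Sigma$ lying inside $\{F\ne S_n(F)\}$. The linear function $\ell=m_1x+m_2y+m_3$ agreeing with $S_n(F)$ on $A$ must be compatible with the periodicity of $S_n(F)$ in the direction $e=(q,-p)$ established in Lemma~\ref{lemma_periodicsmoothongs} (with the roles of $p_1,q_1$ here played by $p,q$): traversing $A$ by the gluing vector $(q,-p)$ must return the same value, which forces $m_1q-m_2p=0$. Since $\gcd(p,q)=1$, this means $(m_1,m_2)=t(p,q)$ for some integer $t$. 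The candidates $t=0$ and $t=1$ correspond to the two linear pieces of $F=\min(px+qy,0)$, so I must rule out every other value of $t$, and also rule out $t\in\{0,1\}$ on the grounds that then $S_n(F)$ would coincide with one of the two branches of $F$ on $A$, contradicting $A\subset\{F\ne S_n(F)\}$ once we propagate this coincidence.

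First I would record the monotonicity input. By Example~\ref{ex_monotone}, $F=\min(px+qy,0)$ is $(0,1)$-increasing when $q>0$, and more generally $e'$-increasing for every $e'$ in the dual cone of the cone spanned by $(0,0)$'s two edges $-(p,q)$ and $(0,0)$... — concretely, $F$ is $e'$-increasing for all $e'$ with $pe'_1+qe'_2\ge 0$, and (pivoting at the vertex $(p,q)$ of the associated segment) also for all $e'$ with $pe'_1+qe'_2\le 0$; the two half-planes together pin the slope of any linear piece of a smoothing. By Corollary~\ref{cor_monotf}, $S_n(F)$ inherits $e'$-monotonicity in all these directions. Feeding the linear function $\ell$ on $A$ into these inequalities forces $(m_1,m_2)$ to lie on the segment joining $(0,0)$ and $(p,q)$ in $\ZZ^2$; combined with $(m_1,m_2)=t(p,q)$ and $\gcd(p,q)=1$ this already gives $t\in\{0,1\}$ — exactly the mechanism of Lemma~\ref{lem_alldirection}, and the reason $A$ is taken of width $|p|+|q|$, large enough for the monotonicity arguments of that lemma to bite on the cylinder.

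It remains to exclude $t=0$ and $t=1$, i.e. to show $S_n(F)$ cannot agree on the whole slab $A$ with either branch $0$ or $px+qy$ of $F$. Here I would use superharmonicity together with the bound $F-n\le S_n(F)\le F$ and Lemma~\ref{lem_alldirection}-style propagation: if $S_n(F)\equiv c$ (a constant, the $t=0$ case after subtracting the linear part via Remark~\ref{rem_linear}) on the slab that wraps all the way around the cylinder, then looking at the set $\{S_n(F)=c\}$ and walking in an $e'$-increasing direction towards $\{S_n(F)=0\}$ produces a vertex where $S_n(F)$ attains a local minimum over its neighbors with strict inequality at one neighbor — contradicting superharmonicity, precisely as in the last paragraph of the proof of Lemma~\ref{lem_alldirection}. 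The case $t=1$ is symmetric after the change of coordinates $G(x,y)=F(x,y)-px-qy$ of Remark~\ref{rem_linear}, which turns the branch $px+qy$ into the constant $0$. Since in all cases we reach a contradiction, the assumption $\gcd(p,q)=1$ is untenable, so $\gcd(p,q)>1$.

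The main obstacle I anticipate is making the periodicity/slope argument fully rigorous on the cylinder $\Sigma$ rather than on $\ZZ^2$: one has to check that a linear function on a fundamental slab of width $|p|+|q|$ that is consistent with the gluing $(x,y)\sim(x+q,y-p)$ genuinely descends, and that the width $|p|+|q|$ is large enough to run the monotonicity comparisons of Lemma~\ref{lem_alldirection} and Corollary~\ref{cor_monotf} — i.e. that all the primitive edge vectors of the relevant dual cones have length at most $\tfrac12(|p|+|q|)$, which should follow because those edges are determined by $(p,q)$ itself. Everything else is a transcription of the $\ZZ^2$ arguments already developed in Sections~\ref{sec_propertiessmooth}--\ref{sec_mono}.
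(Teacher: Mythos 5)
Your proposal is correct and follows essentially the same route as the paper's proof: periodicity in the direction $(q,-p)$ pins the slope of the linear piece to be parallel to $(p,q)$, the preserved monotonicity (Corollary~\ref{cor_monotf}) confines the proportionality coefficient to $[0,1]$, the endpoint cases are excluded by a superharmonicity violation at an extremal point of the constant level set (using the shift of Remark~\ref{rem_linear} for the symmetric case), and the integrality of values then forces $\mathrm{gcd}(p,q)>1$. The only difference is organizational — you assume $\mathrm{gcd}(p,q)=1$ and derive a contradiction, while the paper derives $0<k<1$ directly and concludes from integrality — which is logically the same argument.
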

\begin{proof}
Since $S_n(F)$ is periodic in the direction $(q,-p)$, we conclude that $S_n(F)(x,y)|_A=k(px+qy)+k'$ for some $k,k'\in\ZZ$. The property of $(0,1)$-increasing implies that $k\geq 0$. \m{established that it is linear}

Suppose that $k=0, S_n(F)=k'$ on $A$. Then $k'<0$ because $S_n(F)|_A< F|_A$. Let $y_0$ be $\max\{y|S_n(F)(1,y)=k'\}$. Then $S_n(F)$ is not superharmonic at $(1,y)$, which is a contradiction. Therefore $k>0$.

Consider the function $F'(x,y)=F(x,y)-px-qy$. Using Remark~\ref{rem_linear}, we write  $$S_n(F')(x,y)=S_n(F)(x,y)-px-qy$$ and repeat verbatim all the above consideration, which gives $k<1$. \m{symmetrical stuff. Would be nice to extract as a lemma}

Since $k(px+qy)$ has integer values and $0<k<1$ we conclude that $\mathrm{gcd}(p,q)>1$.
\end{proof}

\section{Proof of Theorem~\ref{th_stabilfn} for $\Psi_{\edge}$}
\label{sec_proofedge}

\begin{proof}
For the sake of notation denote $F=\Psi_\edge$ (see \eqref{eq_edge}), $p=p_1,q=q_1$. We will prove  that the sequence $\{S_n(F)\}_{n=1}^\infty$ of $n$-smoothings (Definition~\ref{def_thetan}) of $F$ eventually stabilizes. It is easy to check that in the cases when $(p,q)=(\pm 1,0),(0,\pm 1),(\pm 1,\pm 1)$ we have $S_1(F) = F$ (cf. Remark~\ref{rem_cases}). \m{simple cases} Therefore, we conclude the proof of the theorem in this case by Corollary~\ref{cor_fnmin}. From now on we suppose that $pq\ne 0$, $q>1$ without loss of generality, and that the sequence $\{S_n(F)\}_{n=1}^\infty$ does not stabilize.

By Lemma~\ref{lemma_periodicsmoothongs}, all $S_n(F)$ are periodic in the direction $(q,-p)$. Consider the quotient $\Sigma$ of $\ZZ^2$ by translations by $(q,-p)$ (see Definition~\ref{def_strip}). \m{descdening to cylinder}
Abusing notations, we think of $F,S_1(F),S_2(F),\dots $ as functions on $\Sigma$. Note that $\mathcal{D}=\sum_{v\in\Sigma}\Delta F(v)$ is finite. Indeed, $\min(0,px+qy)$ has only finite number of points in $\Sigma$ where the Laplacian is not zero.

Applying Lemma~\ref{lemma_nabla} for a big enough neighborhood of $D(F)$ we observe that $\sum_{v\in\Sigma}\Delta S_1(F)(v)=\mathcal{D}$.  Similarly, we obtain $\sum_{v\in\Sigma}\Delta S_n(F)(v)=\mathcal{D}$ for all $n\in\ZZ_{>0}$ and because of superharmonicity of $S_n(F)$ we see that 
\begin{equation}
|D(S_n(F))|=|\{v\in\Sigma | \Delta S_n(F)(v)\ne 0\}|\leq \mathcal{D}.
\end{equation}

Since the sequence $\{S_n(F)\}_{n=1}^\infty$ does not stabilize, by Corollary~\ref{cor_1smoothing} for each $n\in\ZZ_{>0}$ the set $$A_n=\{v\in\ZZ^2|S_n(F)(v)=F(v)-n\}$$ is not empty. Hence $A_1\supset A_2\supset A_3\dots$, and $A_1$ is finite because $A_1\subset D(F)$ by Corollary~\ref{cor_grows}. \m{taking point in depth} Thus we can take $v_0\in \bigcap\limits_{n\geq 1}A_n$.

Note that $F$ is $(0,1)$-increasing and by Corollary~\ref{cor_monotf} so do all $S_n(F)$. \m{reduction to pq, monotonicity} Also if  $m,k\in\ZZ$ are such that $0\leq m\leq q-1,k>|p|$ then $pm+qk>0$ and consequently all $S_n(F)$ are $(m,k)$-increasing. The property of $(m,k)$-increasing gives that $$F(v_0)-n=S_n(F)(v_0)\geq S_n(F)(v_0-(m,k))$$ which is less than $F(v_0-(m,k))$ for fixed $(m,k)$ and $n>\C k$. Therefore $\supp (F-S_n(F))$ 
grows at least linearly in $n$. 

For big $n$ let \add{write explicitely} $$c=\min \{F(x,y)|(x,y)\in \supp (F-S_n(F))\}.$$ Applying Lemma~\ref{lemma_stripestimate} to the function $S_n(F)-c$ we note that $S_n(F)$ is linear on $A\subset\supp (F-S_n(F)), A=B_{q+|p|}(v_0)$. We conclude the proof because Lemma~\ref{lemma_edge} implies that $\mathrm{gcd}(p,q)>1$ which contradicts the definition of $\Psi_\edge$.
\end{proof}

\begin{remark}
\label{rem_sumdelta} The following equality holds: 
 $|\mathcal{D}|=p^2+q^2.$ 
\end{remark}

\begin{proof}
For convenience, consider a function $G(x,y)=\min(0,px-qy)$ and the lattice rectangle $R=[0,q]\times [0,p]\cap\ZZ^2.$ Then 
$$\mathcal{D}=\sum_{R\backslash (q,p)}\Delta G.$$  On the other hand, the sum of Laplacians over the rectangle $R$ is reduced to the sum along its boundary (Lemma~\ref{lemma_nabla}), i.e. 
$$\sum_R \Delta G=\sum_{k=0}^p(G(0,k)-G(-1,k))+\sum_{k=0}^p(G(q,k)-G(q+1,k))+$$
$$+\sum_{k=0}^q(G(k,0)-G(k,-1))+\sum_{t=0}^q(G(k,p)-G(k,p+1)).$$ 
Since $\Delta G(q,p)=-p-q$ we have $$-\mathcal{D}=-p-q-\sum_{R}\Delta G=-p-q+(p+1)p+(q+1)q.$$
\end{proof}

This equality was observed earlier in~\cite{firstsand}. Note also that $p^2+q^2$ is the symplectic area of an edge $(p,q)$ in a tropical curve (see ~\cite{us_series} for details).


\begin{corollary}
\laber{cor_smoothingedge}
Let $p,p',q,q',a,a'\in\ZZ$. Suppose that $\mathrm{gcd}(p-p',q-q')=1$.
Then there exists the canonical smoothing $\theta_{p,q,a,p',q',a'}(x,y)$ of $F(x,y)=\min (px+qy+a,p'x+q'y+a')$. Furthermore,$$\theta_{p,q,a,p',q',a'}(x,y)=\theta_{p-p',q-q',a-a',0,0,0}(x,y)=\theta_{p-p',q-q',0,0,0,0}(x+(a-a')p'',y+(a-a')q'')$$ where $(p'',q'')\in\ZZ^2$ satisfies $(p-p')q''+(q-q')p''=1$. 
\end{corollary}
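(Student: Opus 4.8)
The plan is to reduce the statement about $\theta_{p,q,a,p',q',a'}$ to the already-established existence result (Theorem~\ref{th_stabilfn}(a) and the subsequent definition of $\theta_F$ via Corollary~\ref{cor_finiteness}) for $\Psi_\edge$, and then to track how the canonical smoothing transforms under the two elementary operations appearing in the chain of equalities: subtracting a linear function, and translating the lattice. The first equality, $\theta_{p,q,a,p',q',a'}=\theta_{p-p',q-q',a-a',0,0,0}$, follows directly from Remark~\ref{rem_linear}: writing $F(x,y)=\min(px+qy+a,\,p'x+q'y+a')=(p'x+q'y+a')+\min((p-p')x+(q-q')y+(a-a'),\,0)$, we apply Remark~\ref{rem_linear} with the linear term $p'x+q'y+a'$ to each finite-level smoothing $S_n$, pass to the limit $n\to\infty$ using Theorem~\ref{th_stabilfn}(a) (which applies since $\gcd(p-p',q-q')=1$, so $\min((p-p')x+(q-q')y+(a-a'),0)$ is an instance of $\Psi_\edge$ after a harmless relabelling), and conclude that the canonical smoothings agree after adding back the linear term. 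Since all three displayed functions are, by the very definition of $\theta$, equal to $px+qy+a$ (resp. the appropriate linear function) plus a canonical smoothing of an honest $\Psi_\edge$, existence of all of them is simultaneous.

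Next I would handle the translation, i.e. the equality $\theta_{p-p',q-q',a-a',0,0,0}(x,y)=\theta_{p-p',q-q',0,0,0,0}(x+(a-a')p'',\,y+(a-a')q'')$ where $(p'',q'')$ satisfies $(p-p')q''+(q-q')p''=1$. Set $\bar p=p-p'$, $\bar q=q-q'$, $b=a-a'$. The point is that the function $\min(\bar p x+\bar q y+b,\,0)$ is obtained from $\min(\bar p x+\bar q y,\,0)$ by a lattice translation: one wants a vector $(s,t)\in\ZZ^2$ with $\bar p s+\bar q t=b$, and then $\min(\bar p(x+s)+\bar q(y+t),0)=\min(\bar p x+\bar q y+b,0)$. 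Such an integer vector exists precisely because $\gcd(\bar p,\bar q)=1$ divides $b$; concretely one may take $(s,t)=(b\,p'',\,b\,q'')$ — here I should double-check the sign/index convention, since the Bézout relation is written as $\bar p q''+\bar q p''=1$, so $(s,t)=(b q'',\,b p'')$ and the claimed substitution $(x,y)\mapsto(x+bp'',y+bq'')$ should be read accordingly; the clean way is simply to invoke the hypothesis $\bar p q''+\bar q p''=1$ to see that $(bq'',bp'')$ is a valid choice of $(s,t)$, and then rename if the statement as printed has $p''$ and $q''$ in the other slots. Granting that, the smoothing operator $S_n$ manifestly commutes with lattice translations (the defining class $\Theta_n$ in Definition~\ref{def_thetan} is translation-equivariant: $D(F\circ\tau)=\tau^{-1}D(F)$, and superharmonicity, the bound $F-n\le G\le F$, and the finite-neighbourhood condition are all preserved), so $S_n$ of the translated function is the translate of $S_n(F)$; letting $n\to\infty$ gives the same for $\theta$.

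Combining the two equalities yields the corollary. The main obstacle is not conceptual but bookkeeping: getting the affine substitution exactly right, in particular matching the indices $(p'',q'')$ and the signs in the Bézout identity $(p-p')q''+(q-q')p''=1$ against the direction of the coordinate shift, and making sure the relabelling needed to present $\min(\bar p x+\bar q y,0)$ as literally an instance of $\Psi_\edge$ (which in \eqref{eq_edge} uses the pair $(p_1,q_1)$ with no normalization beyond what $\gcd(\bar p,\bar q)=1$ supplies) does not secretly require $\bar q>1$ or some other side condition. The degenerate slopes $(\bar p,\bar q)\in\{(\pm1,0),(0,\pm1),(\pm1,\pm1)\}$ are covered by the explicit computation $S_1=F$ recalled in the proof of Theorem~\ref{th_stabilfn} for $\Psi_\edge$, so existence holds uniformly and the two transformation lemmas apply without change in those cases as well.
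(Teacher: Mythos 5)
Your proposal is correct and follows essentially the same route as the paper: Remark~\ref{rem_linear} (adding the linear part $p'x+q'y+a'$ commutes with $n$-smoothings) for the first equality, and translation-equivariance of the smoothing, via the identity $\min(\bar p(x+s)+\bar q(y+t),0)=\min(\bar px+\bar qy+(a-a'),0)$ for a Bézout shift $(s,t)$, for the second, with existence supplied by Theorem~\ref{th_stabilfn}(a) since $\gcd(p-p',q-q')=1$. The index mismatch you flag is real --- the shift $(x+(a-a')p'',y+(a-a')q'')$ actually requires $(p-p')p''+(q-q')q''=1$ rather than the relation as printed in the statement --- and your resolution of it is the right one.
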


\begin{proof}
The operation $f(x,y)\to f(x,y)+p'x+q'y+a'$ of adding a linear function  commutes with $n$-smoothings and $$\min\big((p-p')x+(q-q')y+(a-a'),0\big) = \min \big((p-p')(x+(a-a')p'')+(q-q')(y+(a-a')q''),0\big).$$
\end{proof}

\subsection{Classification of solitons, proof of Theorem~\ref{th_main}}
\label{proof_thmain}

Consider a movable line-shaped $(p,q)$-periodic state $\phi$ with $q>0$. As in Section~\ref{sec_cylinder} we pass to the cylinder $\Sigma=\ZZ^2/\{(x,y)\sim (x+p,y+q)\}$. Then $\{\phi\ne 3\}$ is contained in $[0,q-1]\times[-k,k]$ for some $k\in \ZZ$, because of line-shapedness. 

\begin{lemma}
In the above setting, if we send a wave from a point $(x,y)\in\Sigma, x\in[0,q-1],y>>0$ then in the set $\{y<<0\}$ there will be no topplings.
\end{lemma}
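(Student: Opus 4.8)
The plan is to exploit the toppling function of the wave together with the Least Action Principle and the periodicity structure of the cylinder $\Sigma$. Send a wave from $(x_0,y_0)$ with $y_0\gg 0$, and let $H$ be its toppling function on $\Sigma$. Since $\phi$ is movable, there is a point $v$ realizing $W_v\phi=\phi(\cdot+(p,q))$ (after descending, a shift by one period in the $y$-direction of the cylinder, which is the identity map on $\Sigma$ once we also use periodicity — more precisely the wave from $v$ fixes $\phi$). The key observation is that a wave changes $\phi$ only in a bounded neighborhood of the line-shaped locus $\{\phi\ne3\}\subset[0,q-1]\times[-k,k]$, together with a "half-plane shift" in the toppling function: on the region $\{y\gg 0\}$ the toppling function equals a constant (the number of waves already sent, here $1$ at $v_0=(x_0,y_0)$ after normalization), and we must show it equals $0$ on $\{y\ll 0\}$.

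First I would recall that the toppling function $H$ of a legal relaxation on $\Sigma$ is non-negative, has $\Delta H$ supported where $\phi$ or $W\phi$ differs from $3$ essentially, and satisfies the Least Action Principle (Proposition~\ref{prop_waveleast} in the appendix): $H$ is the pointwise minimal non-negative function making $\phi+\Delta H$ stable with a toppling at the source. Second, I would use that on $\Sigma$ the function $H$ is bounded on each half-cylinder: on $\{y\gg 0\}$ it is eventually constant because $\phi\equiv 3$ there and a wave propagates without obstruction past the source until it "escapes"; the natural candidate for $H$ is something of the form studied in Corollary~\ref{cor_wavegp}, namely $H=\theta_{F'}-\theta_F$ for the appropriate $\Psi_\edge$-type function $F$, where $F$ is linear with slope $0$ on the far-negative side. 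Since the canonical smoothing $\theta_F$ and $\theta_{F'}$ both coincide with their defining linear pieces far from the (bounded, in $\Sigma$) deviation set $D(F)$, and on the side $\{y\ll 0\}$ the relevant linear piece is the \emph{same} constant for $F$ and $F'$ (the wave only increments the coefficient of the piece active near the source, i.e.\ on the $\{y\gg0\}$ side), we get $H\equiv 0$ there. Concretely: $F=\Psi_\edge$ here takes the form $\min(0, py-qx)$ or similar with the constant piece $0$ on one side; sending the wave from $y\gg0$ replaces the \emph{other} piece's constant, leaving $F=F'$ on a neighborhood of $\{y\ll 0\}$, hence $\theta_F=\theta_{F'}$ there by uniqueness of the canonical smoothing, hence $H=0$.

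The cleaner route, avoiding the identification of $\phi$ with a $3+\Delta\theta_{\Psi_\edge}$ (which is only established later in the paper), is a direct argument: since $\phi$ is stable and $\{\phi\ne 3\}\subset[0,q-1]\times[-k,k]$, the relaxation after toppling at $(x_0,y_0)$ with $y_0\gg 0$ consists of a cascade of topplings that must cross the strip $\{|y|\le k\}$ to reach $\{y\ll 0\}$. Consider the toppling function $H$ and suppose $H>0$ somewhere in $\{y\ll 0\}$. Because $\phi\equiv 3$ there, $W_v\phi\equiv 3$ would force $\Delta H= 0$ on $\{y\ll 0\}$, so $H$ is discrete harmonic and non-negative on a full sub-cylinder $[0,q-1]\times(-\infty,-M]$; a non-negative harmonic function on such a half-cylinder that is not identically a constant must grow, but $H$ is bounded (it is a toppling function of a single wave, hence bounded by the standard argument that the odometer of one wave on a recurrent-type configuration is bounded — or one invokes the quantitative estimate of Lemma~\ref{lemma_stripestimate}), so $H$ is constant on $\{y\le -M\}$. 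That constant is then $0$: if it were a positive integer $c$, then tracking $H$ up towards the strip, superharmonicity/harmonicity forces $c\le H$ on an unboundedly large region, but $H$ must vanish far enough out in \emph{some} direction (the wave does not topple the whole cylinder), contradiction; alternatively, the conservation identity of Lemma~\ref{lemma_nabla} applied to a large rectangle pins the boundary flux and forces $c=0$.

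The main obstacle I anticipate is making rigorous the claim that the single wave has \emph{bounded} toppling function on the cylinder and that it genuinely "escapes" on the high side rather than cascading indefinitely downward — i.e., ruling out the a priori possibility that the relaxation is infinite or that $H$ grows linearly in $\{y\ll0\}$. This is exactly where the line-shapedness and $(p,q)$-periodicity of $\phi$ are essential: they confine $D(\phi)$ to a compact subset of $\Sigma$, so the only mechanism by which topplings could reach $\{y\ll0\}$ is a "front" passing through the bounded strip, and such a front, once through, propagates on $\phi\equiv3$ which is itself movable (a single wave on the all-$3$ background on $\ZZ^2$, after the first toppling, relaxes: this is precisely the content of Definition~\ref{def_waves} and the remark following it), so it cannot deposit a net positive odometer at $-\infty$. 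I would isolate this as the crux and handle it via the half-cylinder harmonicity + boundedness argument above, citing Lemma~\ref{lemma_stripestimate} for the boundedness-to-linearity step and the Least Action Principle for the existence/finiteness of the wave relaxation on $\Sigma$.
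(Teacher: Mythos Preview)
Your direct route has a genuine gap at the step where you assert that the constant $c$ on the lower half-cylinder must be $0$. Your justification ``the wave does not topple the whole cylinder'' is exactly what is at issue and is not established: on a cylinder, a single wave sent into a state that equals $3$ outside a compact set \emph{can} have $H\equiv 1$ (any background in the sense of the paper gives such an example). The flux identity of Lemma~\ref{lemma_nabla} does not help either: if $H=1$ on both ends then the boundary flux vanishes and $\sum\Delta H=0$, which is perfectly consistent with $W_v\phi$ being a nontrivial shift of $\phi$. The crucial hypothesis you never invoke in your direct route is \emph{movability}; without it there is nothing to exclude $c=1$.

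The paper's argument is organized quite differently and uses movability in an essential way. Instead of analyzing a single wave, it sends $n$ waves and looks at the cumulative toppling function $F$. Movability says each wave translates the soliton by a fixed nonzero vector, so after $n$ waves the current soliton strip sits at $y\sim Cn$, separated from the original strip $\{|y|\le k\}$ by a gap of order $n$ on which $\Delta F=0$. Since a given point lies inside the moving soliton for at most a bounded number of waves, one gets the uniform lower bound $F\ge n-C$. Now start at any $v_1$ with $\Delta F(v_1)<0$ (in the new soliton position): there is a neighbor $v_2$ with $F(v_2)<F(v_1)$, and harmonicity along the gap lets you continue a strictly decreasing chain $F(v_1)>F(v_2)>\cdots$ until you hit $\{\Delta F>0\}$ (the old soliton position), forcing a drop of size $\sim n$. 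This contradicts $F\ge n-C$ once $n$ is large. The amplification from one wave to $n$ waves is the missing idea in your proposal.
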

\begin{proof}
Suppose that there is a toppling in $\{y<<0\}$. Send $n$ such waves where $n$ is big enough. Then the toppling function $F$ will be equal to $n$ in $\{y>>0\}$, $F=n$ in $\{y<<0\}$, $\Delta F \geq 0$ in $[0,q-1]\times[-k,k]$ and $\Delta F\leq 0$ in the position of the soliton after $n$ waves, it is the set $[0,q-1]\times [\C n-k,\C n+k]$. Note that for each $v\in \Sigma$ we have $F(v)\geq n- \C$ because during a wave a point does not topple only if it belongs to the soliton, and the latter moves with some constant speed. Take a point $v_1\in \Sigma$ with $\Delta F(v)<0$. It has a neighbor $v_2$  with $F(v_2)<F(v_1)$ and then $v_2$ has a neighbor $v_3$ with  $F(v_3)< F(v_2)$, etc. Since the distance between  $\{\Delta F<0\}$ and $\Delta F>0$ is al least $\C n$ we will find a point $v$ with $F(v)<n-\C-1$ which is a contradiction.
\end{proof}

Then the toppling function for a soliton is bounded from above by $\min (n, p'x+q'y+r)$ such that $pp'+qq'=\mathrm{gcd}(p,q)$ and $p'x+q'y+r>0$ in $[0,q-1]\times [-k,+\infty]$. Therefore, as in Section~\ref{sec_proofedge} and Corollary~\ref{cor_wavegp} we obtain that $F$ is linear in $[0,q-1]\times [k,k+\C]$ and therefore the soliton is $3+\Delta \theta_{\min (p'x+q'y,0)}$.

\section{Reduction to a smaller state}
We use the notation of Theorem~\ref{th_stabilfn}. Consider $\Psi_\vertex$, \eqref{eq_tripode}. We denote by $\Psi_\vertex'$ the function \m{choose new functions which are better.} 
\begin{equation}
\Psi_\vertex'(x,y) = \min \big(\theta_{\min(0,p_1x+q_1y)}(x,y), \theta_{\min(0,p_2x+q_2y+c_1)}(x,y), \theta_{\min(p_1x+q_1y,p_2x+q_2y+c_1)}(x,y)\big).
\end{equation}

Consider $\Psi_\node$, \eqref{eq_node}. We denote by $\Psi_\node'$ the function 
\begin{align}
\Psi_\node'(x,y) = & \min \big(\theta_{\min(0,p_1x+q_1y)}(x,y), \theta_{\min(0,p_2x+q_2y+c_1)}(x,y), \\
 &\theta_{\min(p_1x+q_1y,(p_1+p_2)x+(q_1+q_2)y+c_2)}(x,y), \theta_{\min(p_2x+q_2y+c_1,(p_1+p_2)x+(q_1+q_2)y+c_2)}(x,y) \big).
\end{align}

Note that each of the functions $F=\Psi_\vertex', \Psi_\node'$ is $\C$-holeless for some $\C$, because $D(\Psi_\vertex'), D(\Psi_\node')$ are periodic. Therefore by applying Corollary~\ref{cor_finiteness} we obtain the following remark. 
\begin{remark}
\laber{rem_grows}
Corollary~\ref{cor_grows} holds for $F=\Psi_\vertex', \Psi_\node'$, if $n$ is big enough.
\end{remark}

\begin{lemma}
\label{lemma_changesmoothing}
Let $F$ be $\Psi_\vertex$ (resp. $\Psi_\node$) and $F'$ be $\Psi_\vertex'$ (resp. $\Psi_\node'$). The following conditions are equivalent:
\begin{itemize}
\item The sequence of $n$-smoothings $S_n(F)$ of $F$ stabilizes.
\item The sequence of $n$-smoothings $S_n(F')$ of $F'$ stabilizes.
\end{itemize}
\end{lemma}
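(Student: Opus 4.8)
The plan is to show that $F$ and $F'$ differ by a uniformly bounded amount and have the same ``deviation-at-infinity'' data, so that a smoothing of one is essentially a smoothing of the other plus a controlled error. Concretely, the key observation is that $F'$ is obtained from $F$ by replacing each of the two-function minima $\min(L_i,L_j)$ appearing implicitly inside $F=\min(L_1,L_2,L_3)$ (or the four pairs for $\Psi_\node$) by its canonical smoothing $\theta_{\min(L_i,L_j)}$, which by Corollary~\ref{cor_smoothingedge} exists since the relevant slope differences are primitive (this is exactly where the hypothesis $p_1q_2-p_2q_1=1$ is used). Since each $\theta_{\min(L_i,L_j)}$ coincides with $\min(L_i,L_j)$ outside a bounded neighborhood of the corresponding line and differs from it by at most $|\mathcal{D}|=p^2+q^2$-type bounded amounts (Proposition~\ref{prop_slicingFn} iterated, or Remark~\ref{rem_sumdelta}), we get a global bound $0\le F-F'\le M$ for some constant $M$ depending only on the slopes and the $c_i$.

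First I would make precise the comparison $F-F'$: away from the ``center'' where all three (or all four) lines are close, exactly one or two of the $L_i$ dominate, and there $F=F'$ because the canonical smoothings agree with the unsmoothed minima there; near the center both $F$ and $F'$ are trapped between $\min_i L_i - M$ and $\min_i L_i$. In particular $F'$ is holeless (already noted in the excerpt, since $D(\Psi_\vertex'),D(\Psi_\node')$ are periodic) and $F-F'$ is supported in a bounded neighborhood of $D(F)$. Then I would invoke the monotonicity Lemma~\ref{lem_notevident}: from $F-M\le F'\le F$ we get $S_n(F-M)\le S_n(F')\le S_n(F)$, and $S_n(F-M)=S_n(F)-M$ is immediate from Remark~\ref{rem_linear} (adding a constant commutes with smoothing), so $S_n(F)-M\le S_n(F')\le S_n(F)$ for every $n$. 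This sandwich shows the two sequences $S_n(F)$ and $S_n(F')$ stay within bounded distance $M$ of each other for all $n$ simultaneously.

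The sandwich alone does not immediately transfer stabilization, because a bounded gap could in principle persist while both sequences keep changing; so the real content is to show that \emph{once the gap between $S_n$ and $S_{n+1}$ vanishes for one function it vanishes for the other}. Here I would use Corollary~\ref{cor_1smoothing}: if $S_n(F)\ne S_{n+1}(F)$ then there is a point $v_0$ with $S_{n+1}(F)(v_0)=F(v_0)-(n+1)$, i.e. the smoothing ``bites'' maximally deep somewhere. Combined with the $e$-increasing monotonicity of $S_n(F)$ and $S_n(F')$ in all the directions dual to the cones at the vertices (Corollary~\ref{cor_monotf}, Lemma~\ref{lem_alldirection}), such a maximally-deep point forces $\supp(F-S_n(F))$ to grow linearly in $n$ — and by the sandwich $S_n(F')$ must then also be cut by a linearly growing amount on a linearly growing set. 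Running the cylinder/Poisson estimates (Lemma~\ref{lemma_poisson}, or the strip estimates of Section~\ref{sec_cylinder}) as in the proof of Theorem~\ref{th_stabilfn}(a), a linearly growing harmonic region inside $\{F'\ne S_n(F')\}$ would force $S_n(F')$ to be linear on a large square there, which by Lemma~\ref{lem_alldirection} (applied to $F'$, whose associated polygon is the same triangle/parallelogram of area $1/2$ or $1$) is impossible. Hence non-stabilization of either sequence is self-contradictory in the presence of non-stabilization of the other, giving the equivalence.

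\textbf{Main obstacle.} The delicate step is the last one: translating ``the gap $S_n-S_{n+1}$ is nonzero infinitely often'' into ``a genuinely large harmonic region appears, not merely a large support''. The support of $F'-S_n(F')$ growing does not by itself produce a $k\times k$ square on which $S_n(F')$ is linear; one needs the combination of the two-sided monotonicity (to pin the gradient of $S_n(F')$ into the finitely many admissible slopes) with a counting/pigeonhole argument on the total Laplacian mass $|\mathcal{D}|$ (bounded, periodic) to locate a harmonic square, exactly the mechanism of Lemma~\ref{lemma_stripestimate} and Lemma~\ref{lemma_poisson}. I expect that carrying this out for $\Psi_\node'$ requires a little care because its deviation set has three (or four) asymptotic rays rather than one, so the reduction to a single cylinder is not available and one must instead work in a large ball far from the center along each ray separately and patch; but the holelessness and periodicity of $D(\Psi_\node')$ make each local picture identical to the $\Psi_\edge$ case, so no genuinely new estimate is needed.
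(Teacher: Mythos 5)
Your first two paragraphs are essentially the paper's argument: $F'$ is superharmonic, coincides with $F$ outside a bounded neighborhood of $D(F)$, and satisfies $F-M\le F'\le F$ for some constant $M$ (this is where Theorem~\ref{th_stabilfn} for $\Psi_\edge$ and Corollary~\ref{cor_smoothingedge} enter), whence by Lemma~\ref{lem_notevident} and Remark~\ref{rem_linear} the smoothings satisfy $S_n(F)-M\le S_n(F')\le S_n(F)$ for all $n$. The gap is in how you finish. The transfer of stabilization does not require any of the harmonic-region machinery: it follows from the sandwich together with a depth bound. If $S_n(F)$ stabilizes at level $N$, then for all $n$ one has $S_n(F')\ge S_n(F)-M\ge F-(N+M)\ge F'-(N+M)$; by Corollary~\ref{cor_1smoothing} (applied to $F'$), any index $n$ with $S_n(F')\ne S_{n+1}(F')$ produces a point where $S_{n+1}(F')=F'-(n+1)$, forcing $n+1\le N+M$, so the $F'$-sequence stabilizes. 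Symmetrically, if $S_n(F')$ stabilizes at level $N'$, then $S_n(F)\ge S_n(F')\ge F'-N'\ge F-(M+N')$ for all $n$, and the same corollary stops the $F$-sequence after step $M+N'$. You had both ingredients (the sandwich and Corollary~\ref{cor_1smoothing}) in hand; the corollary should be used to bound the \emph{depth} of the cut, not to manufacture linear growth of the support.

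The route you propose instead is not available at this point and would not be a proof of this lemma. Deriving a contradiction from non-stabilization via Lemma~\ref{lemma_poisson} and Lemma~\ref{lem_alldirection} is exactly the proof of Theorem~\ref{th_stabilfn}(b,c) in Section~\ref{proof_vertices}, and it rests on facts proved \emph{after} this lemma and specifically for $F'$: finiteness of $\{F'\ne S_k(F')\}$ (Lemma~\ref{lem_finsupfn}, which uses Lemma~\ref{lem_disruption}), the linear upper and lower bounds on its growth (Lemmas~\ref{lem_rnupbound}, \ref{lem_rnlowbound}), and the linear count of deviation points (Lemma~\ref{lem_linnumberofcolored}). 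None of these can be waved through here; in particular for $F=\Psi_\vertex$ or $\Psi_\node$ the set $\{F\ne S_n(F)\}$ is infinite (it runs along all the rays of $D(F)$), so your step ``a maximally deep point forces $\supp(F-S_n(F))$ to grow linearly, hence Poisson gives a large linear square'' is precisely the nontrivial content of those later sections rather than something that follows from the sandwich, and the ``patching along each ray'' you mention in place of the cylinder reduction is the very difficulty that the introduction of $F'$ is designed to circumvent. Moreover, as stated your argument would show that non-stabilization of either sequence is impossible, i.e.\ it would prove the full theorem inside the lemma, which defeats the purpose of the lemma as a reduction step.
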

\begin{proof}
It is enough to note that $F'$ coincides with $F$ outside of a finite neighborhood of $D(F)$ because we have already proven Theorem~\ref{th_stabilfn} for the case of $\Psi_\edge$. Hence there exists $n$ such that $|F-F'|<n$. Therefore $S_n(F)\leq F'\leq F$ and smoothings of $F'$ can be estimated by smoothings of $F$ and vice versa. 
\end{proof}

We want to consider $F'$ instead of $F$ because of the following lemma.
\begin{figure}[h]
\begin{minipage}{.4\textwidth}
\begin{tikzpicture}[scale = 1.2]
\draw[thick] (-1,-2)--(1,2);
\draw[dashed] (-1.5,-2)--(0.5,2); 
\draw[dashed] (-2,-2)--(0,2); 
\draw[dashed] (-2.5,-2)--(-0.5,2); 
\draw[dashed] (-3,-2)--(-1,2); 
\draw[double] (0,0)--(-3,0);
\draw[double] (0,-0.1)--(-3,-0.1);
\draw (0.2,0.4)--(-0.3,0.5)--(-0.5,0.6)--(-0.8,0.4)--(-1.3,0.4)--(-1.5,0.6)--(-1.8,0.4);
\draw (-0.2,-0.4)--(-0.7,-0.3)--(-0.8,-0.3)--(-1.2,-0.4)--(-1.3,-0.3)--(-1.7,-0.3)--(-1.8,-0.3)--(-2.2,-0.4);

\draw (-1.2,-0.8)--(-1.2,-0.9)--(-2.2,-0.9)--(-2.2,-0.8);
\draw (-0.2,-0.5) node[right]{$p'x+q'y<0$};
\draw (-1.1,-2) node[above]{$Q_1$};
\draw (-1.6,-2) node[above]{$Q_2$};
\draw (-2.1,-2) node[above]{$Q_3$};
\draw (-2.6,-2) node[above]{$Q_4$};
\end{tikzpicture}
Illustration for Lemma~\ref{lem_disruption}. Horizontal   line represents $\{\Delta \phi \ne 0\}$, broken lines along it represent the boundary of $\{G \ne S_1(G)\}$. Slices $Q_2,Q_4$ are identical.
\caption{}
\label{fig_strike}
\end{minipage}\hfill
\begin{minipage}{.1\textwidth}
\end{minipage}
\begin{minipage}{.4\textwidth}
\begin{tikzpicture}[scale=1.2]
\draw[double] (3,0)--(-3,0);
\draw[double] (3,-0.1)--(-3,-0.1);

\begin{scope}
\draw[dashed, thick] (-2,-2)--(0,2); 
\draw[dashed] (-2.5,-2)--(-0.5,2); 
\draw[dashed] (-3,-2)--(-1,2); 
\draw (-0.8,0.4)--(-1.3,0.4)--(-1.5,0.6)--(-1.8,0.4);
\draw (-1.2,-0.4)--(-1.3,-0.3)--(-1.7,-0.3)--(-1.8,-0.3)--(-2.2,-0.4);
\draw (-1.6,-2) node[above]{$Q_2$};
\draw (-2.1,-2) node[above]{$Q_3$};

\end{scope}
\begin{scope}[xshift=1cm]
\draw[dashed, thick] (-2,-2)--(0,2); 
\draw[dashed] (-2.5,-2)--(-0.5,2); 
\draw[dashed] (-3,-2)--(-1,2); 
\draw (-0.8,0.4)--(-1.3,0.4)--(-1.5,0.6)--(-1.8,0.4);
\draw (-1.2,-0.4)--(-1.3,-0.3)--(-1.7,-0.3)--(-1.8,-0.3)--(-2.2,-0.4);
\draw (-1.6,-2) node[above]{$Q_2$};
\draw (-2.1,-2) node[above]{$Q_3$};

\end{scope}
\begin{scope}[xshift=2cm]
\draw[dashed, thick] (-2,-2)--(0,2); 
\draw[dashed] (-2.5,-2)--(-0.5,2); 
\draw[dashed] (-3,-2)--(-1,2); 
\draw (-0.8,0.4)--(-1.3,0.4)--(-1.5,0.6)--(-1.8,0.4);
\draw (-1.2,-0.4)--(-1.3,-0.3)--(-1.7,-0.3)--(-1.8,-0.3)--(-2.2,-0.4);
\draw (-1.6,-2) node[above]{$Q_2$};
\draw (-2.1,-2) node[above]{$Q_3$};

\end{scope}
\begin{scope}[xshift=3cm]
\draw[dashed, thick] (-2,-2)--(0,2); 
\draw[dashed] (-2.5,-2)--(-0.5,2); 
\draw[dashed] (-3,-2)--(-1,2); 
\draw (-0.8,0.4)--(-1.3,0.4)--(-1.5,0.6)--(-1.8,0.4);
\draw (-1.2,-0.4)--(-1.3,-0.3)--(-1.7,-0.3)--(-1.8,-0.3)--(-2.2,-0.4);
\draw (-2.1,-2) node[above]{$Q_3$};
\end{scope}
\end{tikzpicture}
Taking the region in between of $Q_2,Q_4$ we repeat it, thus obtaining a smoothing of $\theta_{px+qy,0}$ which is a contradiction.
\caption{}
\label{fig_smooth}
\end{minipage}

\end{figure}

\begin{lemma}
\laber{lem_disruption}
Let $(p,q),(p',q')$ be primitive vectors such that $pq'-p'q=1$. Denote $A= \{(x,y)|p'x+q'y\leq 0\}$. Let $G:\ZZ^2\to\ZZ$ be equal to $\theta_{\min(0,px+qy)}$ in the region $p'x+q'y\geq 0$. Then there exists a constant $\C$ such that $$\{G\ne S_1(G)\}\setminus B_1(A) \text{ is contained in } B_\C(A).$$ 
\end{lemma}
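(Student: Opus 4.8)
The plan is to exploit periodicity and the fact that we already understand the one-dimensional picture (the canonical smoothing $\theta_{\min(0,px+qy)}$ is periodic in the direction $(q,-p)$ by Lemma~\ref{lemma_periodicsmoothongs}). Write $e=(q,-p)$ for the period vector of $\theta_{\min(0,px+qy)}$, and note that $G$ agrees with this periodic function on the half-plane $\{p'x+q'y\ge 0\}$; in particular $G$ is $e$-periodic there. First I would set $G_1=S_1(G)$ and observe, using Proposition~\ref{prop_slicingFn} and the slicing lemma (Lemma~\ref{lemma_slice}), that $H=G-G_1$ takes values in $\{0,1\}$ and that its support $\{G\ne G_1\}$ lies in a bounded neighborhood of $D(G)$, which itself sits inside $B_\C(A)\cup B_\C(\{px+qy=0\})$. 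The only part of $\{G\ne G_1\}$ that could fail to be within $B_\C(A)$ is the part that hugs the line $L=\{px+qy=0\}$ far from $A$; so the whole claim reduces to showing that along $L$, outside a bounded neighborhood of $A$, the smoothing does nothing, i.e. $G=G_1$ there.

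The key step is a \emph{cut-and-paste} argument in the strip direction, illustrated in Figures~\ref{fig_strike} and~\ref{fig_smooth}. Consider the slices $Q_i$ obtained by translating a fundamental domain of the $e$-periodicity along $L$, moving away from $A$. Because $G$ is exactly $e$-periodic in the region $\{p'x+q'y\ge 0\}$ and the support of $H$ near $L$ stays within a fixed distance of $L$, the local data determining whether $G_1<G$ at a given point on $L$ depends only on a bounded window; hence if $G\ne G_1$ occurred at arbitrarily large distance from $A$ along $L$, two such windows $Q_2,Q_4$ would be identical. I would then form a new function by excising the region between $Q_2$ and $Q_4$ and gluing the two identical copies together (Figure~\ref{fig_smooth}): this produces a superharmonic function $\le \theta_{\min(0,px+qy)}$, differing from it on a \emph{finite} set, and strictly below it somewhere — i.e. a nontrivial element of some $\Theta_n(\min(0,px+qy))$ strictly smaller than $\theta_{\min(0,px+qy)}$, contradicting the minimality that defines the canonical smoothing $\theta_{\min(0,px+qy)}$. (Here one uses Remark~\ref{rem_cases}/the stabilization already proved for $\Psi_\edge$, which says $\theta_{\min(0,px+qy)}$ really is the pointwise minimum.)

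To make the gluing legitimate I would verify: (i) superharmonicity is local, so it is preserved everywhere except possibly along the two cut seams, and there it follows because both sides carry identical $e$-periodic data; (ii) the resulting function is still squeezed between $\min(0,px+qy)-n$ and $\min(0,px+qy)$ for a fixed $n$, since $|G-\min(0,px+qy)|$ is bounded on $\{p'x+q'y\ge 0\}$ by Proposition~\ref{prop_slicingFn} applied iteratively; and (iii) the difference with $\theta_{\min(0,px+qy)}$ is supported on a finite neighborhood of $D(\min(0,px+qy))$, which holds because we only altered a compact region. The main obstacle I expect is bookkeeping (i): controlling exactly how wide the window has to be so that the cut does not disturb superharmonicity near the seam, and checking that the excised block genuinely contributes a strict decrease somewhere (otherwise the contradiction is vacuous); this is where the constant $\C$ in the statement gets pinned down, essentially as twice the period length $|p|+|q|$ plus the a~priori width of $\{G\ne S_1(G)\}$ around $D(G)$ coming from Lemma~\ref{lem_finiteneigh} and Corollary~\ref{cor_grows}.
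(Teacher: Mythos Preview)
Your overall strategy matches the paper's: exploit the $e=(q,-p)$ periodicity of $\theta_{\min(0,px+qy)}$ along the soliton, apply pigeonhole to the restrictions of $G-S_1(G)$ to periodic slices $Q_i$, and contradict the minimality of $\theta_{\min(0,px+qy)}$. That is exactly the argument sketched around Figures~\ref{fig_strike} and~\ref{fig_smooth}.

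There is, however, a genuine gap in your construction. You say the new function ``differs from $\theta_{\min(0,px+qy)}$ on a finite set'' and in (iii) that it works ``because we only altered a compact region.'' A compactly supported modification does \emph{not} give the contradiction: if you set $\tilde\theta=S_1(G)$ on $[Q_2,Q_4)$ and $\tilde\theta=\theta$ outside, then at a point $v$ just inside the cut whose neighbor $w$ lies in $Q_4$, you get $\tilde\theta(w)=G(w)=S_1(G)(w)+(G-S_1(G))(w)$; when $(G-S_1(G))(w)=1$ (which is exactly the pigeonhole hypothesis, since $(G-S_1(G))|_{Q_4}=(G-S_1(G))|_{Q_2}\not\equiv 0$) and $\Delta S_1(G)(v)=0$, this forces $\Delta\tilde\theta(v)>0$. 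So the boundary seam destroys superharmonicity.

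The paper's construction, which is what Figure~\ref{fig_smooth} actually depicts, is different from what you wrote: one takes the block $(G-S_1(G))|_{[Q_2,Q_4)}$ and \emph{repeats it periodically along the entire line}, producing a function that differs from $\theta_{\min(0,px+qy)}$ on an \emph{infinite} $e$-periodic set. Superharmonicity at each seam now holds precisely because $(G-S_1(G))|_{Q_2}=(G-S_1(G))|_{Q_4}$, so locally every seam looks like the interior of $S_1(G)$. Membership in some $\Theta_n(\min(0,px+qy))$ then holds not because the support is compact, but because the periodic modification stays inside a fixed-width strip around $D(\min(0,px+qy))$; your point (iii) should be rewritten accordingly.
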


\begin{proof}
We know that $\{G\ne S_1(G)\}$ is contained in the union of $B_1(A)$ and $B_1(\Delta G\ne 0)$. Therefore we need to prove that $\{G\ne S_1(G)\}\setminus B_1(A)$ (which is in $1$-neighborhood of $\{\Delta G \ne 0\}$) can not be far from $A$. Suppose the contrary.
\m{no rupture}
The function $G\_{D(G)}$ is periodic, so we can cut $D(G)$ into periodic pieces, Figure~\ref{fig_strike}. We look at $G-S_{1}(G)$ on the periodic pieces of and find two of them with the the same restriction of $G-S_{1}(G)$. Then it means that we could smooth more the initial function $\theta_{\min(0,px+qy)}$: indeed, take all the pieces in between of these two, repeat that all along as in Figure~\ref{fig_smooth}, and decrease $\theta_{\min(0,px+qy)}$ according to $G-S_{1}(G)$ periodically.
\end{proof}

\begin{lemma}\laber{lem_finsupfn}
For all $k\in\ZZ_{\geq 0}$ the cardinality of the set $\{F'\ne S_k(F')\}$ is finite.
\end{lemma}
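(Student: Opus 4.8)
The plan is to bound the size of $\{F'\ne S_k(F')\}$ by reducing to the edge case $\Psi_\edge$, for which we have already established (via Section~\ref{sec_proofedge} together with Lemma~\ref{lemma_periodicsmoothongs}) that all smoothings agree with $\theta_{\Psi_\edge}$ on a set supported in a bounded neighborhood of the relevant line. The function $F'$ is, by construction, the pointwise minimum of canonical smoothings $\theta_{\Psi_\edge}$ attached to each edge of the Newton polygon of $F$ (the triangle for $\Psi_\vertex$, the parallelogram for $\Psi_\node$), so $D(F')$ is a periodic set concentrated along finitely many rays emanating from a common region near the origin. Thus $F'$ is $\C$-holeless for some $\C$ (Example-style remark preceding Remark~\ref{rem_grows}), and by Corollary~\ref{cor_finiteness} we have $S_k(F')\in\Theta_k(F')$, so $\{F'\ne S_k(F')\}$ lies in a finite neighborhood of $D(F')$; but $D(F')$ is infinite, so this alone does not suffice.

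The key step is to show the disruption set cannot escape to infinity along any of the rays. Along each ray, $F'$ coincides with the canonical edge-smoothing $\theta_{\min(0,px+qy)}$ for the corresponding primitive $(p,q)$ (possibly after adding a linear function, Remark~\ref{rem_linear}), outside a bounded neighborhood of the half-plane $A$ on the ``far'' side. By Lemma~\ref{lem_disruption}, $\{F'\ne S_1(F')\}$ can only differ from $\theta_{\min(0,px+qy)}$'s own (trivial, by Remark~\ref{rem_cases} and Theorem~\ref{th_stabilfn}(a), already stabilized) smoothing within a bounded neighborhood of $A$ — i.e. near the junction region where the rays meet — and not arbitrarily far out along the ray. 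Since $\theta_{\min(0,px+qy)}$ is already its own canonical smoothing ($\Theta_F=\{F\}$ on that ray once the edge case is settled), the contribution of each ray to $\{F'\ne S_1(F')\}$ is confined to a bounded piece near the junction. Iterating: $S_k(F')$ is the $1$-smoothing of $S_{k-1}(F')$ (Corollary~\ref{cor_fnmin}), and Corollary~\ref{cor_grows} (in the form of Remark~\ref{rem_grows}) says the disruption grows at most linearly in $k$, so one reapplies Lemma~\ref{lem_disruption} to $S_{k-1}(F')$: as long as the disruption along each ray has not yet reached out a distance comparable to $k$, it stays trapped near the junction, and the total disruption set remains finite for each fixed $k$.

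Concretely, I would argue by induction on $k$. Base case $k=0$ is trivial. For the inductive step, assume $\{F'\ne S_{k-1}(F')\}$ is finite, say contained in a ball $B_R$. Away from $B_R$, $S_{k-1}(F')$ coincides with $F'$, hence with an edge-smoothing along each ray. Apply Lemma~\ref{lem_disruption} (to $G=S_{k-1}(F')$, which near each ray equals $\theta_{\min(0,px+qy)}$ in the half-plane $p'x+q'y\ge 0$): the new disruption $\{S_{k-1}(F')\ne S_1(S_{k-1}(F'))\}=\{S_{k-1}(F')\ne S_k(F')\}$ can extend past $B_1(A)$ only inside $B_\C(A)$, and since the edge-smoothing is already stable there is no disruption genuinely along the ray at all — so the new disruption lies in $B_{R'}$ for $R'=R+\C+$(a constant depending only on the finitely many edges). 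Hence $\{F'\ne S_k(F')\}\subset B_R\cup B_{R'}$ is finite.

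\textbf{Main obstacle.} The delicate point is the junction region where several rays meet: Lemma~\ref{lem_disruption} handles a single ray emanating from a half-plane, but near the common vertex the smoothing could in principle ``leak'' from one ray into the cone of another. I expect this is handled by choosing the half-planes $A$ in the applications of Lemma~\ref{lem_disruption} so that the bounded neighborhoods $B_\C(A)$ for the different rays only overlap in a compact set, using that the rays are genuinely transverse (the primitivity and determinant-one conditions on the edge vectors, as in $p_1q_2-p_2q_1=1$); then the finitely many bounded contributions simply add up. The periodicity-and-cut argument inside the proof of Lemma~\ref{lem_disruption} (Figures~\ref{fig_strike}, \ref{fig_smooth}) is exactly what prevents escape along each individual ray, so the only new work is bookkeeping the finitely many rays and their finitely many junction regions.
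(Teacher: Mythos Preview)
Your proposal is correct and follows essentially the same approach as the paper: both arguments reduce to showing that the disruption $\{S_{n}(F')\ne S_{n+1}(F')\}$ cannot propagate arbitrarily far along any soliton ray of $D(F')$, and both invoke Lemma~\ref{lem_disruption} for exactly this purpose, combined with the fact (Corollary~\ref{cor_fnmin}) that $S_k$ is an iterated $1$-smoothing. The paper's proof is a three-sentence sketch of precisely your inductive argument; your version spells out the induction and the bookkeeping at the junction more carefully, but there is no genuine difference in strategy. One small correction: your appeal to Remark~\ref{rem_cases} and the claim ``$\Theta_F=\{F\}$ on that ray'' are misstated---what you actually need (and what the paper uses) is simply that $\theta_{\min(0,px+qy)}$ is by definition already the canonical smoothing of the edge function, so $S_1(\theta_{\min(0,px+qy)})=\theta_{\min(0,px+qy)}$; this is the content of Theorem~\ref{th_stabilfn}(a), not Remark~\ref{rem_cases}.
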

\begin{proof}
Note that $D(F)$ is made of solitons. Each time we apply $1$-smoothing, the set $\{F\ne S_n(F)\}$ belongs to a finite neighborhood of $D(F)$. Therefore we only need to prove  that $\{S_n(F)\ne S_{n+1}(F)\}$ can not propagate far {\bf along} a soliton, which is exactly the assertion in Lemma~\ref{lem_disruption}. 
\end{proof}

\section{Growth of an internal harmonic region}
\begin{definition}
For a subset $A\subset \mathbb Z^2$ we define $r(A)$ as $\max_{(x,y)\in A}(\sqrt{x^2+y^2})$, i.e. the maximal distance between $A$ and $(0,0)$.
\end{definition}

\begin{lemma}\laber{lem_rnupbound}
The sequence $R_n=r\Big(\{F'\ne S_n(F')\}\Big)$ grows at most linearly in $n$, i.e. $R_n\leq \C n$ for all $n\in\ZZ_{>0}$.

\end{lemma}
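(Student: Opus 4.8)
\textbf{Proof proposal.} The function $F'=\Psi_\vertex'$ (resp. $\Psi_\node'$) has a very rigid deviation set: $D(F')$ is contained in $B_{\C_0}(0)$ together with finitely many ``soliton tails'' $L_1,\dots,L_m$ emanating from the central region, where in a fixed bounded neighborhood of $L_i$ and outside $B_{\C_0}(0)$ the function $F'$ coincides with a single--soliton smoothing $\theta_{\min(0,p_ix+q_iy)}$ (one of the $\theta$'s occurring in the definition of $\Psi_\vertex'$, resp. $\Psi_\node'$), with $\gcd(p_i,q_i)=1$. Since $F'$ is $\C$-holeless, Corollary~\ref{cor_finiteness} together with Lemma~\ref{lem_finiteneigh} gives $\{F'\ne S_n(F')\}\subseteq B_{\max(n,\C)}(D(F'))$, so the perturbation lies in the union of $B_{\C_0+n}(0)$ and the width-$n$ strips around the $L_i$. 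This already bounds the \emph{transverse} extent of the perturbation by $O(n)$; the whole difficulty is to bound its extent \emph{along} each $L_i$, and for this the key input is Lemma~\ref{lem_disruption}.

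First I would fix $i$ and, for each $k$, let $\ell_k$ be the largest value of $e_i\cdot v$ over points $v\in\{F'\ne S_k(F')\}$ lying within a fixed bounded distance of $L_i$; here $e_i$ is the primitive direction of $L_i$, and $\ell_k<\infty$ by Lemma~\ref{lem_finsupfn}. Writing $S_0(F')=F'$, one has the telescoping decomposition $\{F'\ne S_n(F')\}=\bigcup_{k=0}^{n-1}\{S_k(F')\ne S_{k+1}(F')\}$ together with $S_{k+1}(F')=S_1(S_k(F'))$ by Corollary~\ref{cor_fnmin}. Along $L_i$, past the threshold determined by $\max(\ell_k,\C_0)$, the function $S_k(F')$ agrees with $F'$, hence with the pure soliton $\theta_{\min(0,p_ix+q_iy)}$. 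I would then cut $L_i$ by a half-plane $\{p_i'x+q_i'y\ge c\}$ transverse to $e_i$ (with $p_iq_i'-p_i'q_i=1$, possible since $\gcd(p_i,q_i)=1$) placed just past that threshold and apply the argument of Lemma~\ref{lem_disruption} to $G=S_k(F')$: near $L_i$ the configuration $G$ is exactly that of the pure soliton on one side of the cut, so the new perturbation $\{S_k(F')\ne S_{k+1}(F')\}=\{G\ne S_1(G)\}$ cannot creep more than an absolute constant $\C$ past the cut. Hence $\ell_{k+1}\le\ell_k+\C$ for a constant $\C$ depending only on $(p_i,q_i)$, and iterating (and absorbing the finite base value $\ell_1$) gives $\ell_n\le\C n$.

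Finally I would assemble the bound: any $v\in\{F'\ne S_n(F')\}$ either lies in $B_{\C_0+n}(0)$, or lies within transverse distance $\le n$ of some $L_i$ and has $e_i\cdot v\le\ell_n\le\C n$; in both cases $|v|=O(n)$, where the implied constant is the maximum over the finitely many tails $L_1,\dots,L_m$. This is exactly $R_n\le\C n$.

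I expect the only genuine subtlety to be the invocation of Lemma~\ref{lem_disruption} with $G=S_k(F')$ in place of a function that is literally a pure soliton on a half-plane: one must observe that $S_k(F')$ coincides with $\theta_{\min(0,p_ix+q_iy)}$ on the intersection of the chosen half-plane with a neighborhood of $L_i$, that the other tails $L_j$ ($j\ne i$) diverge away from $L_i$ and so can be excluded by taking the cut far enough from the center, and that $1$-smoothing is a local operation near the deviation set --- so that the proof of Lemma~\ref{lem_disruption}, which only used periodicity of $G$ along the soliton and minimality of the smoothing, goes through verbatim for this localized picture. Keeping track of the constants across the finitely many soliton directions and the central blob is then routine bookkeeping.
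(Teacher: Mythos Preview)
Your proposal is correct and follows essentially the same approach as the paper: bound the transverse spread by holelessness (the paper cites Remark~\ref{rem_grows}, you cite Lemma~\ref{lem_finiteneigh} directly), then control the one-step propagation along each soliton tail via Lemma~\ref{lem_disruption} to get $c_{n+1}\le c_n+\C$. You are in fact more careful than the paper about the subtlety in invoking Lemma~\ref{lem_disruption} with $G=S_k(F')$ rather than a literal global half-plane soliton; the paper's proof (and its proof of Lemma~\ref{lem_finsupfn}) treats this localization as self-evident, whereas you correctly isolate it as the only non-routine step.
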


\begin{proof}
Let $c_n$ be the minimal number such that the support of $F'-S_n(F')$ is contained in $B_{c_n}(O)$.
It is enough to prove that $c_{n+1}\leq c_n+\C$ for all $n$. Now, look at how the support of $F'-S_{n+1}(F')$ differs from the support of $F'-S_n(F')$ outside of $B_{c_n}(O)$. \add{and C depends on what?} It follows from Remark~\ref{rem_grows} that $\supp(F'-S_n(F'))$ belongs to the $n+1$-neighborhood of $\supp(\Delta F')$ for $n$ big enough. \add{what that?}Then we use Lemma~\ref{lem_disruption} along each soliton.
\end{proof}

\begin{lemma}\laber{lem_rnlowbound}
The sequence 
\begin{equation}
\label{eq_rn}
r_n=\max\Big\{r| B_r(O)\subset \{S_n(F')\ne S_{n+1}(F')\}\Big\}
\end{equation}
 grows at least  \add{changed review proof} linearly in $n$, $r_n\geq \C n$ as long as $n$ is big enough.
\end{lemma}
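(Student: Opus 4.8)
Throughout I assume, with a view to the contradiction in Theorem~\ref{th_stabilfn}(b,c), that the sequence $S_n(F')$ does not stabilize; by Corollary~\ref{cor_fnmin} this forces $S_n(F')\ne S_{n+1}(F')$ for every $n$. By Corollary~\ref{cor_1smoothing} the sets $A_n=\{v\mid S_n(F')(v)=F'(v)-n\}$ are then nonempty; by Lemma~\ref{lem_finsupfn} they are finite; and they decrease, $A_1\supseteq A_2\supseteq\cdots$. Fix $v_0\in\bigcap_n A_n$. Since $v_0\in A_1\subseteq\{F'\ne S_1(F')\}$, a fixed finite set, $|v_0-O|$ is at most an absolute constant, so it is enough to find an absolute constant $\C$ with $B_{\C n}(v_0)\subseteq\{S_n(F')\ne S_{n+1}(F')\}$ for all large $n$; then $r_n\ge\C n-|v_0-O|$.

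I would work with the function $D_n:=S_n(F')-F'+n\ge 0$, which satisfies $0\le D_n\le n$, is superharmonic on $\ZZ^2\setminus D(F')$, vanishes at $v_0$, and (by Lemma~\ref{lem_rnupbound}) equals $n$ outside $B_{\C n}(O)$. If $D_n$ were superharmonic on a genuine ball $B_R(v_0)$, the minimum principle for superharmonic functions would force $D_n\equiv 0$ there; hence the non-vanishing of $D_n$ near $v_0$ is caused solely by the thin set $D(F')$, which near $v_0$ consists of the three (for $\Psi_\vertex'$) or four (for $\Psi_\node'$) solitons meeting at the junction. Note that this \emph{cannot} be rephrased as ``$B_{\C n}(v_0)\subseteq A_{n+1}$'': the same minimum principle applied to the subharmonic function $\tilde H=F'-S_{n+1}(F')$ (which attains its maximum $n+1$ on $A_{n+1}$) shows $A_{n+1}\subseteq D(F')$, so $A_{n+1}$ is itself thin. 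The real content of the lemma is that the ``buffer zone'' $\{S_n(F')\ne S_{n+1}(F')\}\setminus A_{n+1}$, where $S_{n+1}(F')=S_n(F')-1$ but still $S_{n+1}(F')>F'-(n+1)$, is what fills up the ball.

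To prove $B_{\C n}(v_0)\subseteq\{S_n(F')\ne S_{n+1}(F')\}$ I would use a barrier argument inside $B_{\C n}(O)\setminus D(F')$, built arm by arm. Along each soliton emanating from $v_0$, $F'$ agrees with the canonical smoothing $\theta_{\Psi_\edge}$ of the corresponding edge up to a bounded amount — such $\theta_{\Psi_\edge}$ exists since case~(a) of Theorem~\ref{th_stabilfn} is already proven — and Lemma~\ref{lem_disruption} guarantees that along that arm the smoothing neither is disrupted nor leaks away from the arm; this transplants one step of smoothing from the (understood) edge model and propagates it a distance $\sim n$ down the arm. Simultaneously, $S_n(F')$ is monotone in every direction lying in a normal cone of the polygon associated to $F'$ (Corollary~\ref{cor_monotf}, which extends to $F'$ since $F'$ is a minimum of edge-smoothings and agrees with $\Psi_\vertex$, resp.\ $\Psi_\node$, off a finite set); because the normal cones of a triangle (resp.\ parallelogram) tile $\ZZ^2$, this pins the region $\{S_n(F')\ne S_{n+1}(F')\}$ as star-shaped about $v_0$ and prevents it from being pinched between the arms. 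Gluing the arms at the junction then forces $\{S_n(F')\ne S_{n+1}(F')\}$ to contain a genuine ball $B_{\C n}(v_0)$. The main obstacle is exactly this gluing at the junction — upgrading the arm-by-arm estimate from a thickened tripod to a two-dimensional ball — and it is here that one must use the monotonicity in the full spanning family of directions, invoked simultaneously at every vertex of the polygon.
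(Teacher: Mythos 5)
Your skeleton (non-stabilization, nonempty nested finite sets $A_n$, a point $v_0\in\bigcap_n A_n$ close to the origin) matches the paper, but after that the proposal does not close: the central step — passing from arm-by-arm control along the solitons to a genuine two-dimensional ball at the junction — is exactly what you flag as ``the main obstacle'' and never carry out, so this is a plan rather than a proof. The intermediate claims are also not substantiated: monotonicity of $S_n(F')$ and $S_{n+1}(F')$ separately in directions from the normal cones does not make the \emph{difference} set $\{S_n(F')\ne S_{n+1}(F')\}$ star-shaped (a difference of monotone functions is not monotone), and Lemma~\ref{lem_disruption} only prevents the smoothing from spreading far \emph{along} an arm — it gives no lower bound forcing a whole disc of radius $\sim n$ to be affected. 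Note also that you aim at the literal set $\{S_n(F')\ne S_{n+1}(F')\}$, which is stronger than what is actually needed and used later: in the proof of Theorem~\ref{th_stabilfn} the inner circle bounds the set $\{F'\ne S_n(F')\}$, and that is the set the paper's argument (and its displayed formula, up to a slip) really concerns.

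The ingredient you mention in passing — monotonicity in directions from the dual cones at the vertices of the Newton polygon — already finishes the proof directly, with no barriers or gluing. For an arbitrary $v$ with $|v-v_0|<\C n$ put $\tilde v=v_0-v$ and subtract from $F$ the linear function attached to a vertex of the polygon whose dual cone contains $\tilde v$ (the dual cones cover the plane); by Remark~\ref{rem_linear} this commutes with smoothing, and by Corollary~\ref{cor_monotf} the shifted $F$, hence $F'$ and every $S_n(F')$, is $\tilde v$-increasing. Then $S_n(F')(v)\le S_n(F')(v_0)=F'(v_0)-n$, while the slopes of $F'$ are bounded, so for $\C$ small enough $|v-v_0|<\C n$ forces $F'(v)>F'(v_0)-n$; hence $S_n(F')(v)<F'(v)$, i.e.\ $B_{\C n}(v_0)\subset\{F'\ne S_n(F')\}$, and since $|v_0|$ is bounded (as $v_0\in A_1$, a fixed finite set by Lemma~\ref{lem_finsupfn}) the lemma follows. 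The point you missed is that the direction of monotonicity may be chosen \emph{per point} $v$, which converts the qualitative ``star-shapedness'' intuition into the quantitative linear lower bound without any analysis at the junction.
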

\begin{proof}
We suppose that the sequence $\{S_n(F)\}_{n=1}^\infty$ does not stabilize. Therefore, by Corollary~\ref{cor_1smoothing} for each $n\in\ZZ_{>0}$ the set $A_n=\{v\in\ZZ^2|S_n(F)(v)=F(v)-n\}$ is not empty. Hence $A_1\supset A_2\supset A_3\dots$, and $A_1$ is finite by Lemma~\ref{lem_finsupfn}. Thus we can take $v_0\in \bigcap\limits_{n\geq 1}A_n$. 
  Take any point $v\in \ZZ^2$. Consider the vector $\tilde v=v_0-v$. By adding a suitable linear function to $F$  we may suppose  that $F$ is $\tilde v$-increasing. Hence we may suppose that $F'$ is $\tilde v$-increasing. Then $S_n(F')(v)\leq S_n(F')(v_0)\leq F'(v_0)-n$. Therefore there exists a constant $\C$ (depending on the slopes of the linear parts of $F$) such that if $|v-v_0|< \C n$ then $F'(v)\geq F'(v_0)-n$. For such $v$, clearly, $S_n(F')(v)<F'(v)$ which, with the fact that $|v_0|$ is a fixed finite number, concludes the lemma. 
\end{proof}

\begin{lemma}\laber{lem_linnumberofcolored} There exists a constant $\rho$ such that the number of points $v$ in $B_{n\C}(0,0)$ with $\Delta S_n(F')(v) <0$ is at most $\rho n$ for $n$ big enough.
\end{lemma}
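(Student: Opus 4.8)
The plan is to relate the total number of non-harmonic points of $S_n(F')$ inside the ball $B_{n\C}(O)$ to the ``discrete flux'' of $S_n(F')$ out of that ball, and to bound that flux by controlling the derivatives of $S_n(F')$ on the boundary sphere. Recall that $F'$ is a holeless function whose deviation set consists of finitely many solitons emanating from a bounded region; away from a bounded neighborhood of the origin, $S_n(F')$ agrees with the canonical smoothings $\theta_{\Psi_\edge}$ along each soliton (by Lemma~\ref{lem_disruption} and Lemma~\ref{lem_finsupfn}), and along a soliton in direction $(p,q)$ the total mass $\sum \Delta$ per period is $-(p^2+q^2)$ (Remark~\ref{rem_sumdelta}). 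So the contribution to $\sum_{v\in B_{n\C}(O)}\Delta S_n(F')(v)$ coming from points lying on the solitons grows only linearly in $n$ — each soliton crosses $B_{n\C}(O)$ in $O(n)$ periods, each contributing a bounded amount. Hence it suffices to show that $S_n(F')$ has no ``extra'' non-harmonic points, i.e. that outside a linear-in-$n$ neighborhood of the solitons the function $S_n(F')$ is harmonic in $B_{n\C}(O)$, or at least that the non-harmonic points not lying near a soliton are few.

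First I would invoke Lemma~\ref{lemma_nabla}: for $A = B_{n\C}(O)\cap\ZZ^2$,
$$\sum_{v\in A\setminus\partial A}\Delta S_n(F')(v) = \sum_{\substack{v\in\partial A,\ v'\in A\setminus\partial A\\ v\sim v'}}\bigl(S_n(F')(v)-S_n(F')(v')\bigr),$$
so the left side is controlled by the discrete gradient of $S_n(F')$ on the sphere of radius $n\C$. Next I would bound that gradient: by Lemma~\ref{lem_rnupbound} the support of $F'-S_n(F')$ lies in $B_{\C n}(O)$, so by choosing $\C$ in the statement a bit larger than the constant there, on $\partial A$ the function $S_n(F')$ coincides with $F'$, which is a minimum of finitely many integer linear functions; its discrete derivatives are bounded by a fixed constant depending only on the slopes. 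Thus the right side above is at most $\C\cdot(\#\partial A)\le \C n$. Since $\Delta S_n(F')\le 0$ everywhere (superharmonicity), this gives $\sum_{v\in A}|\Delta S_n(F')(v)| = -\sum_{v\in A}\Delta S_n(F')(v)\le \C n + (\text{boundary correction})\le \C' n$. As $\Delta S_n(F')(v)<0$ forces $|\Delta S_n(F')(v)|\ge 1$ (integer values), the number of such points in $B_{n\C}(O)$ is at most $\C' n$, which is the claim with $\rho = \C'$.

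The one subtlety I would be careful about is the interface between the ball $B_{n\C}(O)$ and the solitons: a priori the sum $\sum_{v\in A}\Delta S_n(F')(v)$ could be large and negative even though the boundary-gradient term is $O(n)$, provided $S_n(F')$ is far from $F'$ deep inside $A$ — but this is exactly excluded, because by Lemma~\ref{lem_rnupbound} (with the constant suitably enlarged) $S_n(F') = F'$ on and near $\partial A$, so Lemma~\ref{lemma_nabla} applies cleanly with $F = S_n(F')$ and the boundary values are those of $F'$. The main obstacle is therefore ensuring the bookkeeping is consistent: one must fix the constant $\C$ in the statement to be strictly larger than the linear-growth constant of Lemma~\ref{lem_rnupbound} for the support of $F'-S_n(F')$, so that the boundary of $B_{n\C}(O)$ genuinely sees only $F'$, and then the uniform bound on discrete derivatives of $F'$ (a minimum of finitely many fixed integer linear functions) does the rest. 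Everything else is a routine application of Lemma~\ref{lemma_nabla} together with integrality of the Laplacian.
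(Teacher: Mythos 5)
Your proof is correct and essentially the same as the paper's: both apply Lemma~\ref{lemma_nabla} to $B_{n\C}(O)$, use Lemma~\ref{lem_rnupbound} to ensure $S_n(F')$ and $F'$ coincide near the boundary, and finish with superharmonicity plus integrality of the Laplacian --- the paper rewrites the sum as $\sum_{v\in B_{n\C}(O)}\Delta F'(v)$ and notes it is linear in $n$ because $\Delta F'$ is carried by solitons with bounded mass per period, while you bound the very same boundary flux by the uniformly bounded discrete gradient of $F'$ over the $O(n)$ boundary points, which is an equivalent estimate. (One small imprecision: $F'$ is a minimum of canonical smoothings rather than of linear functions, but since it differs from $\Psi_\vertex$ or $\Psi_\node$ by a bounded amount its discrete derivatives are still uniformly bounded, so your flux bound stands.)
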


\begin{proof}
The functions $F', S_n(F')$ coincide outside $B_{n\C}(O)$ and superharmonic, therefore it follows from Lemma~\ref{lemma_nabla} that $$\sum_{v\in B_{n\C}(0,0)}|\Delta S_n(F')(v)|=\sum_{v\in B_{n\C}(0,0)}\Delta S_n(F')(v)=\sum_{v\in B_{n\C}(0,0)}\Delta F'(v).$$ Then, outside of a finite neighborhood of $(0,0)$ the function $\Delta F'(v)$ coincide locally with $\Delta \theta_{\min(p'x+q'y,c)}$ in each direction, and $\sum_{v\in B_{n\C}(0,0)}\Delta \theta_{\min(p'x+q'y,c)}$ is linear in $n$ for any coprime $p',q'\in\ZZ^2$. This works both for $\Psi_\vertex$ and $\Psi_\node$.
\end{proof}

\section{Proof of Theorem~\ref{th_stabilfn} for  $F=\Psi_\vertex$ and  $F=\Psi_\node$.}
\label{proof_vertices}
A     geometric explanation of the proof is as follows. Since $r_n$ (see \eqref{eq_rn}) grows linearly, $\{S_n(F')\ne F'\}$ encircles a figure with the area of order $n^2$. In this figure, $\{\Delta S_n(F')\ne 0\}$ is of linear size, hence we can find a big part where $S_n(F')$ is harmonic and with at most linear growth, thus, it is linear, which will contradict to Lemma~\ref{lem_alldirection}.

Now we supply all the details. Suppose that the sequence $\{F_n\}$ of $n$-smoothings of $F$ does not stabilize as $n\to\infty$. Therefore, by Lemma~\ref{lemma_changesmoothing} the sequence  of  $\{S_n(F')\}$ of $n$-smoothings of $F'$ does not stabilize. Lemma~\ref{lem_finsupfn} asserts that the support of $F'-S_n(F')$ is finite, and Lemmata~\ref{lem_rnupbound},\ref{lem_rnlowbound} tell us that the set $\{F'\ne S_n(F')\}$ grows at most and at least linearly in $n$. Refer to Figure~\ref{fig_smoothing}: the grey region is $\{F'\ne S_n(F')\}$, internal (resp. external) circle has radius $r_n=\C n$, (resp. $R_n=\C n$ with other $\C$) and represents a subset (resp. superset) of $\{F'\ne S_n(F')\}$. 

Remark~\ref{rem_cases} eliminated several simple cases. \add{non-esthetic change of coordinates}So, after change of coordinates $x\to y, y\to x$, if necessary,  we may assume that the tropical curve defined by $F$ does not contain the vertical ray (the dashed line on Figure~\ref{fig_smoothing}) and  the top part of the dashed line belongs to the region where $F\equiv 0$. \add{it is not clear} Note that $F$ is $(0,1)$-increasing (Definition~\ref{def_monotone}), and, thus, $F'$ and all $S_n(F')$ are $(0,1)$-increasing by Corollary~\ref{cor_monotf}.

By Lemma~\ref{lem_linnumberofcolored} the number of points $v$ in $B_{n\C_6}(O)$ with $\Delta S_n(F')(v)<0$ is bounded from above by $\rho n$ for some fixed $\rho$. We draw a rectangular $R$ like\add{delete like} in Figure~\ref{fig_smoothing}. Namely, the vertical sides of $R$ lie on different sides of the dashed line $x=0$, the horizontal sides has length $cN$ with some fixed $c$ \add{who is c?}, $R$ does not intersect the set $\Delta F'<0$ outside of $B_{\C_6n}(O)$.

Take $N$ big enough and consider $S_N(F')$. Then we choose $l$ big enough and cut $R$ into squares of size $\frac{cN}{l}$ (later we refer to them as small squares). We consider the intersection $R'=R\cap\{F'\ne S_N(F')\}$ and pick all the small square which belongs to this intersection.  

Comparing the area of $R'\sim N^2$ with $\rho N$ we see that there exists a small square $S$ which contains at most $\frac{cN}{k}$ points $v$ with $\Delta S_N(F')(v)<0$. Let $M$ be the minimum of $S_N(F')$ on $R'$. 
Then Lemma \ref{lemma_poisson} implies that $S_n(F')-M$ should be linear on this small square $k\times k$ which is a subset of $S$. This, by Lemma~\ref{lem_alldirection}, is a contradiction.

\begin{figure}[h]
\begin{center}
\begin{tikzpicture}
\begin{scope}[scale=1.2]
\pgfmathsetseed{2}
\begin{axis}[xmin=-6, xmax=6, ymin=-6, ymax=6, hide axis]
\addplot[line width=1pt, color=black, name path = f1,domain=-2.5:2.5, samples=64, smooth] (\x,{sqrt(6.25-\x*\x)+0.1*rand*(6.25-\x*\x)});
\addplot[line width=1pt, color=black, name path = f2,domain=-2.5:2.5, samples=64, smooth] (\x,{-sqrt(6.25-\x*\x)-0.1*rand*(6.25-\x*\x)});
\end{axis}
    
\end{scope}
\begin{scope}[scale=0.6, xshift=6.7cm, yshift=5.5cm]
\draw (0,0) circle (1.9);

\draw (0,0) circle (3.2);

\draw[very thick] (0,0) -- (-2.5,-7);
\draw[very thick] (0,0) -- (3,-7.5);
\draw[very thick] (0,0) -- (1,8);
\draw[dashed] (0,5)--(0,-5);

\draw (-0.5,-4)--(-0.5,1);
\draw (-0.5+0.2,-4)--(-0.5+0.2,1);
\draw (-0.5+0.4,-4)--(-0.5+0.4,1);
\draw (-0.5+0.6,-4)--(-0.5+0.6,1);
\draw (-0.5+0.8,-4)--(-0.5+0.8,1);
\draw (-0.5+1,-4)--(-0.5+1,1);
\draw (-0.5+1.2,-4)--(-0.5+1.2,1);

\draw (-0.5,1)--(0.7,1);
\draw (-0.5,0.8)--(0.7,0.8);
\draw (-0.5,0.6)--(0.7,0.6);
\draw (-0.5,0.4)--(0.7,0.4);
\draw (-0.5,0.2)--(0.7,0.2);
\draw (-0.5,0)--(0.7,0);
\draw (-0.5,-0.2)--(0.7,-0.2);
\draw (-0.5,-0.4)--(0.7,-0.4);
\draw (-0.5,-0.6)--(0.7,-0.6);
\draw (-0.5,-0.8)--(0.7,-0.8);
\draw (-0.5,-1)--(0.7,-1);
\draw (-0.5,-1.2)--(0.7,-1.2);
\draw (-0.5,-1.4)--(0.7,-1.4);
\draw (-0.5,-1.6)--(0.7,-1.6);
\draw (-0.5,-1.8)--(0.7,-1.8);
\draw (-0.5,-2.0)--(0.7,-2.0);
\draw (-0.5,-2.2)--(0.7,-2.2);
\draw (-0.5,-2.4)--(0.7,-2.4);
\draw (-0.5,-2.6)--(0.7,-2.6);
\draw (-0.5,-2.8)--(0.7,-2.8);
\draw (-0.5,-3.0)--(0.7,-3.0);
\draw (-0.5,-3.2)--(0.7,-3.2);
\draw (-0.5,-3.4)--(0.7,-3.4);
\draw [<->] (-0.5,-5)--(0.7,-5);
\draw (0,-5.5) node {$\sim N$};
\end{scope}
\end{tikzpicture}
\caption{An illustration for the proof of Theorem~\ref{th_stabilfn}.}
\label{fig_smoothing}
\end{center}
\end{figure}
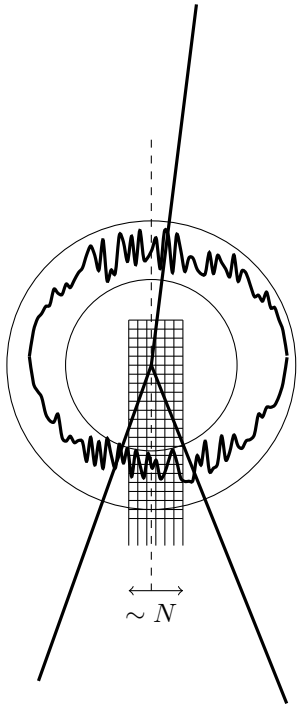

\section{Discussion}

\subsection{Sand dynamic on tropical varieties, divisors.}

Let $G$ be a graph and $V=\{v_1,v_2,\dots, v_k\}$ be a collection of some of its vertices. Consider the following state $\phi_V = \sum_{v\in G} v\cdot (\mathrm{deg}(v)-1) - \sum_{v\in V}\delta_v$. It corresponds to the divisor $V=\sum_{v\in V}\delta_v$. Let $P=\{p_1,p_2,\dots, p_n\}$ be another collection of vertices of $G$. There there exists a divisor linearly equivalent to $V$ and containing $P$ if and only if the relaxation of $\phi_V+\sum_{p\in P}\delta_p$ terminates.\add{is it understanadble at all? give links to similar one-dim discussion, for example, book of perkinson.}

In this article we studied sandpile on $\h\ZZ^2\cap\Omega$. We can produce the same type of problems for
a tropical variety, if we have a sort of grid on it. The convergence results are expected to be formulated and proven in the same
way.

What is an interesting aspect of the possible applications is the tropical divisors. Indeed, using relaxation in
sand dynamic we can understand if there exist a divisor linearly equivalent to a given tropical divisor $L$, passing through prescribed set of points. For that we represent $L$ as a collection of sand-solitons glued with help of sand triads (because locally $L$ looks like a tropical plane curve and we know which sand-solitons should we take), then we add sand to the points $p_1,p_2,\dots,p_n$ and
relax the obtained state. If the relaxation terminates, it produces
the divisor which is linearly equivalent to $L$. If not, that means that such
a divisor does not exist.


\subsection{Application of our methods: fractals and patterns in sandpile}
The sandpile on $\ZZ^2$ exhibits a fractal structure; see, for example, the pictures of the identity element in the critical group~\cite{LP}. As far as we know, only a few cases have a rigorous explanation.\m{fractals} It was first observed in~\cite{ostojic2003patterns} that if we rescale by $\sqrt n$ the result of the relaxation of the state with $n$ grains at $(0,0)$ and zero grains elsewhere, it weakly converges as $n\to\infty$. Than this was studied in~\cite{levine2009strong} and was finally proven in~\cite{PS}. However the fractal-like pieces of the limit found their explanation later, in~\cite{levine2013apollonian, LPS} and happen to be curiously related to Apollonian circle packing. Recently the stability of patterns was proven in \cite{pegden2017stability}.

Our research approaches these questions from the combinatorial and polyhedral points of view and rigorously explains appearance of thin balanced graphs (which are called defects in \cite{pegden2017stability}): in most of these fractal pictures one can find solitons which propagate along such graphs which were observed and studied from the point of view of experimental physics in~\cite{CPS}, Fig.~3,~\cite{firstsand}, see also a thesis~\cite{book} for a one nicely written consistent piece of information. 

In particular, we prove well known (experimentally) fact that solitons move changeless under the action of waves, this is why we call them {\it solitons}. We also study local interactions of solitons: {\it triads} and what is know in tropical geometry as {\it nodes}. A lot of work is to be done in future. Another thesis,~\cite{sadhu2017emergence}(see also~\cite{sadhu2012pattern}), contains a lot of pictures and examples with apparent piece-wise linear behavior. \m{a lot of piecewise linear behavior} We expect that the methods of this article will be used to study the fractal structure in those cases.

\appendix
\section{Locally finite relaxations and waves}
\label{sec_locallyrelaxations}
In this section we study the relaxations and stabilizability issues. The main goal here is to establish The Least Action Principle (Proposition~\ref{prop_leastaction}, cf.~\cite{FLP}) and wave decomposition \m{here we discuss stiabilizability issues}(Proposition~\ref{prop_decompositioninwaves} and Corollary~\ref{cor_leastforwaves}) for {\bf locally-finite} relaxations (Definition~\ref{def_locfin}) on infinite graphs. We also prove that given a finite upper bound on a toppling function of a state, there exists a relaxation sequence of this state which converges pointwise to a stable state (Lemma~\ref{lem_relcritlf}).  

The proofs are the same as in the finite case, but in the absence of references we give all the details here. Sandpiles on infinite graphs were previously considered, for example, in~\cite{MR2150146, fey2009stabilizability, bhupatiraju2016inequalities}, but only from the distribution point of view: in their approach the relaxation (after adding a grain to a random configuration in a certain class) is locally finite almost sure with respect to a certain distribution. However, there are a lot of similarities between this section and~\cite{MR3350375}. \m{add that nothing new, and proof of above estimate }

\subsection{The Least Action Principle for locally finite relaxations, relaxability}
Let $\Gamma$ be at most countable set, $\tau\colon\Gamma\rightarrow\mathbb{Z}_{> 0}$ be a {\it threshold} function and $\gamma\colon\Gamma\rightarrow 2^\Gamma$ be a set-valued function such that 
\begin{itemize}
\item $v\not\in\gamma(v),$ 
\item if $v\in\gamma(w)$, then $w\in\gamma(v)$,
\item $|\gamma(v)|\leq\tau(v) \text{ for all } v\in\Gamma,$ where $|\gamma(v)|$ denotes the number of elements in the set $\gamma(v).$ \add{but all our results are only for planar geaphs, non?}
\end{itemize} 

We interpret $\gamma(v)$ as the set of neighbors of a point $v\in\Gamma.$ We write $u\sim v$ instead of $u\in \gamma(v)$, because $\gamma$ indeces a symmetric relation. The Laplaian $\Delta$ is the operator on the space $\mathbb{Z}^\Gamma=\{\phi\colon\Gamma\rightarrow\mathbb{Z}\}$ of {\it states} on $\Gamma$ given by $$\Delta\phi(v)=-\tau(v)\phi(v)+\sum_{u\sim v}\phi(u).$$ 
A function $\phi$ is called {\it superharmonic} if $\Delta\phi\leq 0$ everywhere. 

\begin{remark}
\laber{rem_suph1} 
Note that $|\gamma(v)|\leq\tau(v) \text{ for all } v\in\Gamma$ holds if and only if the function $\phi\equiv 1$ is superharmonic.
\end{remark}

\begin{example}
In our main situation, $\Gamma$ is a subset of $\ZZ^2$ and $|\gamma(v)|=\tau(v)=4$ for all $v\in\Gamma\setminus \partial\G$. In this case we obtain the standard definition of a Laplaian on $\G\setminus\partial\G$:
\begin{equation}
\label{def_laplacian}
\Delta\phi(v)=-4\phi(v)+\sum_{u\sim v}\phi(u).
\end{equation}
\end{example}

\begin{definition}
 For a point $v\in\Gamma$, we denote by $T_v$ the {\it toppling} operator acting on the space of states $\mathbb{Z}^\Gamma$. It is given by $$T_v\phi=\phi+\Delta\delta(v),$$ where $\delta(v)$ is the function on $\Gamma$ taking $1$ at $v$ and vanishing elsewhere.  
\end{definition}

\begin{definition}\laber{def_relaxseq}
A {\it relaxation} $\phi_\bullet$ of a state $\phi\in\mathbb{Z}^\Gamma$ is a sequence of functions $\phi_\bullet=\{\phi_i\}_{i \in I}$, (for $I=\ZZ_{\geq 0}$ or $I=\{0,1,\dots, n\}, n\in\ZZ_{\geq 0}$) such that $\phi_0=\phi$ and for each $k\geq 0$ there exists $v_k\in\Gamma$ such that $\phi_k(v_k)\geq\tau(v_k)$ and $\phi_{k+1}=T_{v_k}\phi_k.$ The {\it toppling function} $H_{\phi_\bullet}\colon\Gamma\rightarrow\mathbb{Z}_{\geq 0}\cup\{\infty\}$ of the relaxation $\phi_\bullet$ is given by $$H_{\phi_\bullet}=\sum\limits_{i\in I}\delta({v_i}),$$ it counts the number of topplings at every point during this relaxation. We refer to $\{v_i\}_{i\in I}$ as a {\it relaxation sequence}.
\end{definition}

\begin{definition}\laber{def_locfin}
A relaxation $\phi_\bullet$ is called {\it locally-finite} if $H_{\phi_\bullet}(v)$ is finite for every $v\in\Gamma.$ The {\it result of a locally-finite relaxation} is the state $\phi'$ given by the point-wise limit $$\phi'=\phi_0+\Delta H_{\phi_\bullet}=\lim_{k\rightarrow\infty}\phi_k.$$
\end{definition}

\begin{lemma}\laber{lem_boundt}
Consider a locally-finite relaxation $\phi_\bullet$ for a state $\phi$ and a function $F\colon\Gamma\rightarrow\mathbb{Z}_{\geq 0}$ such that $\phi+\Delta F<\tau$. Then $H_{\phi_\bullet}(v)\leq F(v)$ for all $v\in\Gamma$. 
\end{lemma}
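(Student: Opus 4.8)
\textbf{Proof proposal for Lemma~\ref{lem_boundt}.}

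The plan is to run the standard ``Least Action Principle'' argument: compare the relaxation sequence $\{v_i\}$ against the target bound $F$ by tracking, for the first time a point is toppled beyond $F$, the deficit created at its neighbors. First I would argue by contradiction: suppose some point gets toppled more than $F$ allows during the relaxation. Since the relaxation is a sequence of single topplings indexed by $i\in I$, there is a \emph{first} index $k$ at which a toppling pushes the running toppling count above $F$ at the toppled vertex; that is, writing $H_k=\sum_{i<k}\delta(v_i)$ for the partial toppling function after $k$ steps, we have $H_k(v)\le F(v)$ for all $v$ (by minimality of $k$), while $H_k(v_k)=F(v_k)$ and $\phi_k(v_k)\ge\tau(v_k)$ because the toppling at step $k$ is legal.

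The key step is then a pointwise lower bound on the running state at that vertex. For any vertex $w$ and any partial toppling function $H$ with $0\le H\le F$ pointwise and $H(w)=F(w)$, monotonicity of the Laplacian in the ``direction'' of the function gives
$$\phi_k(w)=\phi_0(w)+\Delta H_k(w)=-\tau(w)H_k(w)+\sum_{u\sim w}H_k(u)+\phi_0(w)\le -\tau(w)F(w)+\sum_{u\sim w}F(u)+\phi_0(w)=(\phi_0+\Delta F)(w),$$
where the inequality uses $H_k(w)=F(w)$ and $H_k(u)\le F(u)$ for the neighbors $u\sim w$. Applying this at $w=v_k$ yields $\phi_k(v_k)\le(\phi_0+\Delta F)(v_k)<\tau(v_k)$ by the hypothesis $\phi+\Delta F<\tau$, contradicting legality of the toppling at step $k$. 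Hence no such first index exists, and $H_k(v)\le F(v)$ for all $k$ and all $v$; passing to the limit (for a point $v$, $H_{\phi_\bullet}(v)$ is the increasing limit of $H_k(v)$, finite by local finiteness) gives $H_{\phi_\bullet}(v)\le F(v)$ for all $v\in\Gamma$.

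The only genuine subtlety — and the step I expect to need the most care — is the passage from ``the partial toppling functions are bounded by $F$'' to ``the total toppling function is bounded by $F$'' on an infinite graph: one must note that $H_k(v)$ is nondecreasing in $k$ and that for each fixed $v$ the value $H_{\phi_\bullet}(v)=\sup_k H_k(v)$ is attained as a limit of integers bounded above by $F(v)$, so the bound is preserved; local finiteness of $\phi_\bullet$ is exactly what guarantees $H_{\phi_\bullet}(v)<\infty$, though in fact the bound $H_k(v)\le F(v)$ already forces finiteness. Everything else is the routine telescoping identity $\phi_k=\phi_0+\Delta H_k$ and the elementary inequality above; no compactness or ordering of the (possibly infinite) relaxation sequence beyond its given indexing by $I$ is needed.
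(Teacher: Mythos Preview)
Your proof is correct and is essentially the same as the paper's: both track the partial toppling counts $H_k$, use the identity $\phi_k=\phi_0+\Delta H_k$, and at the critical vertex compare $\Delta H_k$ with $\Delta F$ via $H_k(v_k)=F(v_k)$ and $H_k(u)\le F(u)$ for neighbors $u$, deriving $\phi_k(v_k)\le (\phi_0+\Delta F)(v_k)<\tau(v_k)$ to contradict legality. The only cosmetic difference is that you phrase it as ``find the first violating index'' while the paper phrases it as a forward induction on $n$ showing $H_n\le F$; the inequality manipulation is identical, and your remark on passing to the limit is slightly more explicit than the paper's one-line ``it suffices to show $H_n\le F$ for every $n$.''
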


\begin{proof}
We use the notation from Definition \ref{def_relaxseq}. Consider the relaxation $\phi_\bullet$ and the corresponding sequence of functions $H_n$ for $n=1,\dots$ given by  

\begin{equation}
\label{eq_topplingn}
H_n=\sum_{i=1}^n\delta({v_i}).
\end{equation}
 Let $H_0\equiv 0$. 
It suffices to show that $H_n\leq F$ for every $n$, and $H_0\equiv 0\leq F$. Suppose that $n>0$ and $H_{n-1}\leq F.$ Since $H_n=H_{n-1}+\delta({v_n}),$ it is enough to show that $H_{n-1}(v_n)<F(v_n).$ We know that $\phi_n(v_n)\geq\tau(v_n)$ and $\phi_n(v_n)=\phi_0(v_n)+\Delta H_{n-1}(v_n).$ Therefore, \begin{align*}\tau(v_n)\leq & \phi_0(v_n)-\tau(v_n)H_{n-1}(v_n)+\sum_{u\sim v_n}H_{n-1}(u)\leq\\
\leq &  \phi_0(v_n)-\tau(v_n)H_{n-1}(v_n)+\sum_{u\sim v_n}F(u)=\\
=&\phi_0(v_n)+\Delta F(v_n)+\tau(v_n) \big(F(v_n)-H_{n-1}(v_n)\big). \end{align*}
Since $\phi_0(v_n)+\Delta F(v_n)< \tau(v_n)$ (by the hypothesis of the lemma) and $\tau(v_n)>0,$ we conclude that
$$1\leq F(v_n)-H_{n-1}(v_n).$$\end{proof}

\begin{corollary}\label{cor_alllf}
Consider a state $\phi.$ If there exist a function $F\colon\Gamma\rightarrow\mathbb{Z}_{\geq 0}$ such that $\phi+\Delta F<\tau$, then all relaxation sequences of $\phi$ are locally finite.  
\end{corollary}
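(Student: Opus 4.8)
The plan is to reduce everything to the induction already carried out in the proof of Lemma~\ref{lem_boundt}, observing that that induction never actually uses local finiteness of the relaxation but only manipulates the finite partial toppling functions $H_n$ from~\eqref{eq_topplingn}. In other words, the corollary is not literally Lemma~\ref{lem_boundt} only because the latter is stated under the standing assumption that $\phi_\bullet$ is locally finite; once one checks that its proof touches only the truncations $H_n$, local finiteness turns into a conclusion rather than a hypothesis.

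First I would fix an arbitrary relaxation sequence $\{v_i\}_{i\in I}$ of $\phi$, where a priori $I$ may be infinite, and, as in Definition~\ref{def_relaxseq}, set $H_n=\sum_{i=1}^n\delta(v_i)$ with $H_0\equiv 0$; each $H_n$ is a nonnegative integer-valued function with finite support, regardless of whether the whole relaxation is locally finite. Next I would run verbatim the induction from the proof of Lemma~\ref{lem_boundt}: $H_0\leq F$ holds trivially, and assuming $H_{n-1}\leq F$, the fact that $v_n$ is legally toppled, i.e. $\phi_n(v_n)=\phi_0(v_n)+\Delta H_{n-1}(v_n)\geq\tau(v_n)$, combined with $\sum_{u\sim v_n}H_{n-1}(u)\leq\sum_{u\sim v_n}F(u)$ and the hypothesis $\phi+\Delta F<\tau$, forces $H_{n-1}(v_n)<F(v_n)$; hence $H_n=H_{n-1}+\delta(v_n)\leq F$.

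Finally, since $H_n\leq F$ for every $n$ and the sequence $\big(H_n(v)\big)_n$ is nondecreasing for each fixed $v$, the toppling function of $\phi_\bullet$ satisfies $H_{\phi_\bullet}(v)=\lim_n H_n(v)\leq F(v)<\infty$ for every $v\in\Gamma$, which is precisely the statement that $\phi_\bullet$ is locally finite. As the relaxation sequence was arbitrary, all relaxation sequences of $\phi$ are locally finite.

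I do not anticipate a genuine obstacle here; the only point requiring minimal care is the one flagged above, namely that the estimate $H_n\le F$ is self-contained at the level of finite truncations and does not presuppose the conclusion it is being used to establish.
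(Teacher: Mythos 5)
Your proposal is correct and is essentially the paper's intended argument: the paper derives Corollary~\ref{cor_alllf} directly from Lemma~\ref{lem_boundt}, whose inductive proof only ever bounds the finite truncations $H_n$ from~\eqref{eq_topplingn} and never invokes local finiteness, so the bound $H_n\leq F$ holds for an arbitrary relaxation sequence and yields $H_{\phi_\bullet}\leq F<\infty$ pointwise. Your explicit remark that local finiteness thereby becomes a conclusion rather than a hypothesis is exactly the point the paper leaves implicit.
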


\begin{lemma}\laber{lem_mergerel}
Consider a state $\phi$ and the set $\Psi$ of all its relaxations $\psi_\bullet$. Then there exists a relaxation $\phi_\bullet$ of $\phi$ such that $$H_{\phi_\bullet}(v)=\sup_{\psi_\bullet\in\Psi} H_{\psi_\bullet}(v), \forall v\in\Gamma.$$
\end{lemma}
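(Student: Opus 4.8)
The plan is to build the desired relaxation $\phi_\bullet$ by a diagonal/greedy construction that simultaneously dominates every relaxation in $\Psi$. First I would observe that the supremum $M(v) = \sup_{\psi_\bullet\in\Psi} H_{\psi_\bullet}(v)$ is a well-defined element of $\ZZ_{\geq 0}\cup\{\infty\}$ at every vertex, and record the elementary monotonicity (``abelian'') fact: if $v$ is topplable in $\phi$ (i.e. $\phi(v)\geq\tau(v)$) then $v$ is toppled in \emph{some} relaxation, hence $M(v)\geq 1$; more generally, after performing any finite legal toppling sequence $T_{v_1}\cdots T_{v_k}$ the partial toppling counts are pointwise $\leq M$, because that finite sequence extends (possibly trivially) to a full relaxation in $\Psi$. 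The key point I would isolate as a sub-claim is the \emph{local abelian property}: if $u_\bullet$ and $w_\bullet$ are two finite legal toppling sequences for $\phi$ with toppling functions $H_{u}$ and $H_{w}$, then there is a finite legal sequence whose toppling function is $\max(H_u,H_w)$ (equivalently $H_u\vee H_w$ is ``achievable''). This is the usual strong-convergence/abelian lemma for sandpiles and can be proved by the standard exchange argument: if $H_u(v) < H_w(v)$ somewhere, then in the state $\phi+\Delta H_u$ the vertex $v$ must still be topplable (one compares $\phi+\Delta H_u$ with $\phi + \Delta H_w'$ for a suitable truncation $H_w'\leq H_u$ of $H_w$ and uses $\Delta$-monotonicity in the coordinate of $v$), so we may legally append a toppling at $v$; iterating drives $H_u$ up to $H_u\vee H_w$.

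Granting the sub-claim, I would construct $\phi_\bullet$ as follows. Fix an enumeration $v^{(1)},v^{(2)},\dots$ of $\Gamma$ (it is at most countable) and also an enumeration $\psi^{(1)}_\bullet,\psi^{(2)}_\bullet,\dots$ would be nice but $\Psi$ need not be countable, so instead I work only with finite initial segments. At stage $n$ I maintain a finite legal toppling sequence with toppling function $H^{(n)}$ such that $H^{(n)}\leq M$ pointwise and such that $H^{(n)}(v^{(i)})$ already equals $\min(n, M(v^{(i)}))$-worth of progress for $i\leq n$ — more precisely, I arrange that for each $i\leq n$, $H^{(n)}(v^{(i)}) \geq \min(n, M(v^{(i)}))$. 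To pass from stage $n$ to $n+1$: for each $i\leq n+1$ with $H^{(n)}(v^{(i)}) < \min(n+1, M(v^{(i)}))$, pick by definition of the supremum a relaxation $\psi_\bullet\in\Psi$ with $H_{\psi_\bullet}(v^{(i)}) > H^{(n)}(v^{(i)})$, take a finite initial segment of it whose toppling function $K$ satisfies $K(v^{(i)}) > H^{(n)}(v^{(i)})$, and apply the sub-claim to replace $H^{(n)}$ by (an achievable realization of) $H^{(n)}\vee K$; this only increases $H^{(n)}$ and keeps it $\leq M$ since both $H^{(n)}\leq M$ and $K\leq M$. Doing this for the finitely many relevant $i$ produces $H^{(n+1)}$. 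By construction $H^{(1)}\leq H^{(2)}\leq\cdots$, each is realized by a finite legal toppling sequence that extends the previous one, and $\lim_n H^{(n)}(v) = M(v)$ for every $v$: the limit is $\geq M(v)$ because $v=v^{(i)}$ for some $i$ and from stage $i$ onward we push $H^{(n)}(v)$ up to $\min(n,M(v))$, and it is $\leq M(v)$ because every $H^{(n)}\leq M$. Concatenating these nested finite legal sequences yields a single relaxation $\phi_\bullet$ (in the sense of Definition~\ref{def_relaxseq}, indexed by $I=\ZZ_{\geq 0}$ or a finite set if everything terminates) with $H_{\phi_\bullet} = M$, which is exactly the assertion.

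The main obstacle I anticipate is the bookkeeping that makes the nested-segment construction legitimate: I must ensure that when I ``apply the sub-claim'' I genuinely obtain a finite legal toppling sequence that \emph{starts with} the one already built (so that the infinite concatenation is a single honest relaxation and its toppling function is the pointwise limit of the $H^{(n)}$). The exchange argument in the sub-claim naturally gives this — it only \emph{appends} topplings to $u_\bullet$ — so the real care is in (i) checking that each appended toppling is legal at the moment it is performed, which is precisely what $\Delta$-monotonicity in a single coordinate buys us, and (ii) handling the case $M(v)=\infty$ uniformly: there the stage-$n$ target $\min(n,M(v))=n$ grows without bound, which is fine since we never claim to reach $\infty$ in finitely many steps, only in the limit. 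A secondary subtlety is that $\Psi$ may be uncountable, which is why the construction references $\Psi$ only through finitely many witnesses at each stage rather than trying to enumerate it; once this is noted, no further appeal to the size of $\Psi$ is needed. Everything else — that $\Delta$ of a toppling function behaves additively, that legality is preserved under the exchange — is routine and parallel to the finite-graph case already alluded to in the text.
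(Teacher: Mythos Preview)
Your proposal is correct and follows essentially the same approach as the paper: both rely on the abelian exchange argument (your sub-claim) to legally append topplings from one relaxation onto another, and both build the maximal relaxation by a diagonal construction that pushes the toppling count at each vertex up to the supremum. The paper enumerates the countable set $W=\{(v,k):\text{some relaxation topples $v$ at least $k$ times}\}$ rather than the vertices, and performs the exchange inline rather than isolating it as a lemma, but these are packaging differences only.
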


\begin{proof}
Consider the set $W=\{(v,k)\}\subset \G\times \ZZ_{\geq 0}$ which contains all pairs $(v,k)$ such that there exists a relaxation $\phi_\bullet^{v,k}\in\Psi$ which has $k$ topplings at the vertex $v\in\G$. Clearly, if $(v,k)\in W, k>0$ then $(v,k-1)\in W$. The set $W$ is at most countable, so we order it as $\{(v_n,k_n)\}_{n=1,2,\dots}$ in such a way that $(v,k-1)$ appears earlier than $(v,k)$ for all $(v,k)\in W, k>0$.

Take any relaxation $\phi_\bullet$. We construct relaxations $\phi_\bullet^0,\phi_\bullet^1,\dots$ in such a way that $\phi_\bullet=\phi_\bullet^0$, all $\phi_\bullet^{\geq n}$ coincide at first $n$ topplings, and for each $n\geq 0$ the toppling function of $\phi_\bullet^n(v_n)$ is at least $k_n$.

Let $\phi_\bullet^{n-1}$ be already constructed, $n\geq 1$, we construct $\phi_\bullet^n$ as follows.

If the toppling function of $\phi_\bullet^{n-1}$ at $v_n$ is at least $k_n$, we are done.  If not, take $\phi_\bullet^{v_{n},k_n}$ and consider its toppling functions $H_{\phi_\bullet^{v_{n},k_n}}^i$ as in \eqref{eq_topplingn} except that we put the bottom index to the top. Take the first $i$ such that there exists $w\in\G$ such that $H_{\phi_\bullet^{n-1}}(w)< H_{\phi_\bullet^{v_{n},k_n}}^i(w)$. Since it is the first such moment, for some $j$ we have $$H^j_{\phi_\bullet^{n-1}}(w')\geq H_{\phi_\bullet^{v_{n},k_n}}^i(w')$$ for all $w'\sim w$. So we add to $\phi_\bullet^{n-1}$ the toppling at $w$ somewhere after $j$-th toppling, and denote the obtained relaxation sequence as $\phi_\bullet^{n-1}$ again. Note that by repeating this cycle of arguments a finite number of times, we will have that $\phi_\bullet^n(v_n)\geq k_n$.
\end{proof}

\begin{definition}
\laber{def_relaxable}
A state $\phi$ is called {\it stable} if $\phi<\tau$ everywhere. A state $\phi$ is called {\it relaxable} if there exist a locally-finite relaxation $\phi_\bullet$ of $\phi$ such that $\phi'$(Definition~\ref{def_locfin}) is stable. Such a relaxation $\phi_\bullet$ is called {\it stabilizing}.
\end{definition}

\begin{corollary}\label{cor_untopf}
If $\phi$ is relaxable, then $H_{\phi_\bullet^1}=H_{\phi_\bullet^2}$ for any pair of stabilizing relaxations $\phi_\bullet^1$ and $\phi_\bullet^2$ of $\phi.$ In particular, $(\phi_\bullet^1)^\circ=(\phi_\bullet^2)^\circ.$
\end{corollary}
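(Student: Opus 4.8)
The plan is to show that if $\phi$ is relaxable, then the toppling function of any stabilizing relaxation is the \emph{pointwise minimum} over all relaxations (not just stabilizing ones) of their toppling functions, via the Least Action Principle. Concretely, let $\phi_\bullet^1$ be a stabilizing relaxation with result $(\phi_\bullet^1)^\circ = \phi + \Delta H_{\phi_\bullet^1} < \tau$. Then $F := H_{\phi_\bullet^1}$ is a nonnegative integer-valued function with $\phi + \Delta F < \tau$, so Lemma~\ref{lem_boundt} applies to \emph{any} locally-finite relaxation $\psi_\bullet$ of $\phi$ and yields $H_{\psi_\bullet} \leq F = H_{\phi_\bullet^1}$. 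Now if $\phi_\bullet^2$ is a second stabilizing relaxation, it is in particular locally-finite, so $H_{\phi_\bullet^2} \leq H_{\phi_\bullet^1}$; by the symmetric argument (using $F = H_{\phi_\bullet^2}$, which also satisfies $\phi + \Delta F < \tau$ since $(\phi_\bullet^2)^\circ$ is stable), $H_{\phi_\bullet^1} \leq H_{\phi_\bullet^2}$. Hence $H_{\phi_\bullet^1} = H_{\phi_\bullet^2}$.

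For the ``in particular'' clause, once the toppling functions agree the results agree immediately: $(\phi_\bullet^i)^\circ = \phi + \Delta H_{\phi_\bullet^i}$ by Definition~\ref{def_locfin}, and the right-hand side depends only on $H_{\phi_\bullet^i}$, which we have just shown is common to both. So $(\phi_\bullet^1)^\circ = (\phi_\bullet^2)^\circ$.

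The only subtlety — and it is minor — is checking the hypothesis of Lemma~\ref{lem_boundt} is genuinely available: we need $F = H_{\phi_\bullet^i}$ to be $\mathbb{Z}_{\geq 0}$-valued (clear, it counts topplings) and, crucially, that $H_{\phi_\bullet^i}$ is everywhere finite (true by definition of a locally-finite, hence stabilizing, relaxation) so that $\Delta F$ makes sense pointwise and the strict inequality $\phi + \Delta F = (\phi_\bullet^i)^\circ < \tau$ holds by stability of the result. There is no real obstacle here; the entire corollary is a two-line consequence of Lemma~\ref{lem_boundt} applied twice with the roles of the two relaxations swapped. I would present it in essentially the four sentences above.
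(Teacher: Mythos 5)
Your argument is correct and is exactly the paper's proof: apply Lemma~\ref{lem_boundt} twice, once with $F=H_{\phi_\bullet^1}$ bounding $H_{\phi_\bullet^2}$ and once with the roles swapped, using that stability of the results gives $\phi+\Delta H_{\phi_\bullet^i}<\tau$. (Only your opening framing is slightly off --- the argument shows a stabilizing toppling function is a pointwise \emph{upper} bound for the toppling functions of all locally-finite relaxations, not a minimum --- but this does not affect the proof.)
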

\begin{proof}
Applying Lemma \ref{lem_boundt} twice, we have $H_{\phi_\bullet^1}\leq H_{\phi_\bullet^2}$ and $H_{\phi_\bullet^1}\geq H_{\phi_\bullet^2}.$
\end{proof}

\begin{lemma}\laber{lem_relcritlf}
If all relaxations of a state $\phi$ are locally-finite, then $\phi$ is relaxable.
\end{lemma}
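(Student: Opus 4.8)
The statement to prove is Lemma~\ref{lem_relcritlf}: if all relaxations of $\phi$ are locally-finite, then $\phi$ is relaxable, i.e.\ some locally-finite relaxation ends at a stable state. The natural approach is to produce a \emph{maximal} relaxation and show its result is stable. By Lemma~\ref{lem_mergerel} there exists a relaxation $\phi_\bullet$ whose toppling function $H_{\phi_\bullet}$ pointwise dominates that of every other relaxation of $\phi$; by hypothesis this relaxation is locally-finite, so it has a well-defined result $\phi'=\phi+\Delta H_{\phi_\bullet}$. I claim $\phi'$ is stable, which finishes the proof since $\phi_\bullet$ is then a stabilizing relaxation.

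\textbf{Key step: stability of the maximal result.} Suppose, to the contrary, that $\phi'(v)\ge\tau(v)$ for some $v\in\Gamma$. Because the relaxation is locally-finite, there is a finite stage $N$ after which no toppling in $\phi_\bullet$ occurs at $v$ or at any neighbor $u\sim v$ of $v$ (only finitely many topplings happen at each of the finitely many vertices in $\{v\}\cup\gamma(v)$). Hence for all $k\ge N$ we have $\phi_k(v)=\phi'(v)\ge\tau(v)$, so $v$ is topple-able at every stage $k\ge N$. Now extend the relaxation: form a new relaxation sequence agreeing with $\phi_\bullet$ through stage $N$ and then toppling $v$ once more at stage $N+1$ (this is legal since $\phi_N(v)\ge\tau(v)$), continuing arbitrarily afterwards — e.g.\ just stop, giving a finite relaxation, or splice back the tail of $\phi_\bullet$. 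This is a genuine relaxation of $\phi$ whose toppling function at $v$ equals $H_{\phi_\bullet}(v)+1$, contradicting the maximality of $H_{\phi_\bullet}$ from Lemma~\ref{lem_mergerel}. Therefore $\phi'(v)<\tau(v)$ for all $v$, so $\phi'$ is stable.

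\textbf{The main obstacle} is the bookkeeping needed to turn ``$v$ is topple-able in the limit'' into ``a legitimate relaxation sequence performs one extra toppling at $v$.'' One must check that inserting a toppling at $v$ after a finite initial segment of $\phi_\bullet$ produces a sequence satisfying Definition~\ref{def_relaxseq}: the inserted toppling is legal because $\phi_N(v)\ge\tau(v)$, and one may then simply terminate (Definition~\ref{def_relaxseq} allows finite relaxations), so there is no need to re-verify legality of any later topplings. A subtle point is that one must pick $N$ large enough that $H_{\phi_\bullet}$ has already reached its final value on the finite set $\{v\}\cup\gamma(v)$; this is exactly where local finiteness is used, and it is legitimate because that set is finite. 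Once $N$ is chosen, $\phi_N(v)=\phi'(v)$ on the relevant vertices and the contradiction with Lemma~\ref{lem_mergerel} is immediate.
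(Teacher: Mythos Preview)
Your proof is correct and follows the same strategy as the paper: obtain a maximal relaxation via Lemma~\ref{lem_mergerel}, observe it is locally-finite, and show its result is stable by inserting one extra toppling at a would-be unstable vertex after all topplings at that vertex and its neighbors have already occurred. The paper adds a preliminary paragraph arguing separately that $\sup_{\psi_\bullet} H_{\psi_\bullet}(v)$ is finite at each $v$ before invoking Lemma~\ref{lem_mergerel}, but as you implicitly note this is redundant once Lemma~\ref{lem_mergerel} is applied directly to all relaxations and the hypothesis is used on the resulting maximal one; your version is therefore a mild streamlining of the same argument.
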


\begin{proof}
Consider a point $v\in\Gamma.$ We will prove that there exist $N>0$ such that $H_{\phi_\bullet}(v)<N$ for all relaxations $\phi_\bullet$ of $\phi.$ Suppose the contrary. Then there exists a sequence of relaxations $\phi^n_\bullet$ such that $\lim_{n\rightarrow\infty}H_{\phi^n_\bullet}(v)=\infty.$ Applying Lemma \ref{lem_mergerel} to the sequence $\phi^n_\bullet$ we see that there exists a relaxation of $\phi$, that is not locally-finite.

Therefore, for any $v\in\Gamma$ there exist a relaxation $\phi^v_\bullet$ such that $H_{\phi_\bullet}(v)\leq H_{\phi^v_\bullet}(v)$ for all relaxations $\phi_\bullet$ of $\phi.$ Applying  Lemma \ref{lem_mergerel} again to the family of relaxations $\{\phi^v_\bullet\}_{v\in\Gamma}$ we find a relaxation sequence $\tilde\phi_\bullet$ such that $H_{\phi_\bullet}(v)\leq H_{\tilde\phi_\bullet}(v)$ for all relaxations $\phi_\bullet.$

We claim that $\tilde\phi_\bullet$ is a stabilizing relaxation. Suppose that $\phi+H_{\tilde\phi_\bullet}$ is not stable, i.e. there exists $v\in\Gamma$ such that $\phi(v)+H_{\tilde\phi_\bullet}(v)\geq \tau(v).$ Therefore, we can make an additional toppling at $v$ after the moment when all the topplings at $v$ and its neighbors in $\tilde\phi_\bullet$ are already made. This contradicts to the maximality of $\tilde\phi_\bullet$.
\end{proof}

\begin{proposition}\laber{prop_relcrit}
A state $\phi$ is relaxable if and only if there exists a function $F\colon\Gamma\rightarrow\mathbb{Z}_{\geq 0}$ such that $\phi+\Delta F<\tau$.
\end{proposition}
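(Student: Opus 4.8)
The statement is an equivalence, and each direction is essentially an application of a result already established in this appendix; the only work is bookkeeping. I would organize it as follows.

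First, the implication ``relaxable $\Rightarrow$ exists $F$''. Assume $\phi$ is relaxable, so by Definition~\ref{def_relaxable} there is a locally-finite stabilizing relaxation $\phi_\bullet$ of $\phi$. Take $F=H_{\phi_\bullet}$, the toppling function of this relaxation. By Definition~\ref{def_locfin} local-finiteness means $H_{\phi_\bullet}(v)<\infty$ for every $v$, hence $F\colon\Gamma\to\ZZ_{\geq 0}$ is a genuine function into the non-negative integers. Moreover the result of the relaxation is $\phi'=\phi+\Delta H_{\phi_\bullet}=\phi+\Delta F$, and ``stabilizing'' means precisely that $\phi'$ is stable, i.e. $\phi'<\tau$ everywhere. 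Thus $\phi+\Delta F<\tau$, as required.

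Second, the implication ``exists $F$ $\Rightarrow$ relaxable''. Assume there is $F\colon\Gamma\to\ZZ_{\geq 0}$ with $\phi+\Delta F<\tau$. By Corollary~\ref{cor_alllf} this forces every relaxation sequence of $\phi$ to be locally finite. Then Lemma~\ref{lem_relcritlf} applies and yields that $\phi$ is relaxable. This closes the equivalence.

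\textbf{Where the content lies.} There is no genuine obstacle in the Proposition itself: it is a repackaging of Corollary~\ref{cor_alllf} and Lemma~\ref{lem_relcritlf} (for one direction) together with the definitions of locally-finite and stabilizing relaxation (for the other). The substance was already spent in proving Lemma~\ref{lem_relcritlf}, whose argument in turn rests on the merging construction of Lemma~\ref{lem_mergerel} (amalgamating relaxations so as to dominate all of them). If anything needs care here it is only the observation that the toppling function of a \emph{locally-finite} relaxation automatically lands in $\ZZ_{\geq 0}$, so that it is admissible as the function $F$; the strict inequality $\phi+\Delta F<\tau$ is exactly the stability of the stabilized state and needs no extra estimate.
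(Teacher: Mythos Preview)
Your proof is correct and matches the paper's own argument essentially verbatim: the paper takes $F=H_\phi$ for the forward direction and, for the converse, invokes Lemma~\ref{lem_boundt} (of which your Corollary~\ref{cor_alllf} is the immediate restatement) followed by Lemma~\ref{lem_relcritlf}. There is no substantive difference.
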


\begin{proof}
If $\phi$ is relaxable then we can take $F$ to be $H_\phi.$ On the other hand, if such $F$ exists, then by Lemma \ref{lem_boundt} all the relaxations of $\phi$ are locally-finite. Therefore,  $\phi$ is relaxable by Lemma \ref{lem_relcritlf}.
\end{proof}

\begin{definition} 
\laber{def_topplingfunc}
Consider a relaxable state $\phi.$ Denote by $H_\phi$ {\it the toppling function of} $\phi$, where $H_\phi$ is a toppling function of some stabilizing relaxation of $\phi.$ Define {\it the relaxation of} $\phi$ to be the state $\phi^\circ=\phi+\Delta H_\phi.$
\end{definition}

\begin{proposition}[The Least Action Principle, \cite{FLP}]\laber{prop_leastaction}
Let $\phi$ be a relaxable state and $F\colon\Gamma\rightarrow\mathbb{Z}_{\geq 0}$ be a function such that $\phi+\Delta F$ is stable. Then $H_\phi\leq F.$ In particular, $H_\phi$ is the {\bf pointwise} minimum of all such functions $F$. 
\end{proposition}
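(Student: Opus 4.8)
The plan is to imitate the classical finite-graph argument, leaning on the locally-finite machinery already developed. Since $\phi$ is relaxable, fix a stabilizing relaxation $\phi_\bullet$ with toppling function $H_\phi$; by Corollary~\ref{cor_untopf} this is well-defined independently of the choice of stabilizing relaxation. We are given $F\colon\Gamma\to\mathbb{Z}_{\geq 0}$ with $\phi+\Delta F$ stable, i.e. $\phi+\Delta F<\tau$. The goal is exactly $H_\phi\le F$ pointwise, and then the ``in particular'' clause is immediate: $F=H_\phi$ is itself admissible (as $\phi+\Delta H_\phi=\phi^\circ$ is stable), so $H_\phi$ lies in the competing class and is below every member of it, hence is its pointwise minimum.

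The key observation is that the inequality $H_\phi\le F$ is not a new fact but a direct instance of Lemma~\ref{lem_boundt}. First I would invoke Corollary~\ref{cor_alllf}: the existence of $F$ with $\phi+\Delta F<\tau$ forces \emph{every} relaxation sequence of $\phi$ to be locally-finite. In particular the stabilizing relaxation $\phi_\bullet$ whose toppling function is $H_\phi$ is locally-finite, so Lemma~\ref{lem_boundt} applies with this very $F$ and yields $H_{\phi_\bullet}(v)=H_\phi(v)\le F(v)$ for all $v\in\Gamma$. That is the whole inequality. (One should note that Lemma~\ref{lem_boundt} is stated for ``a locally-finite relaxation $\phi_\bullet$'' and a nonnegative integer $F$ with $\phi+\Delta F<\tau$, which is precisely our hypothesis, so there is nothing to adapt.)

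The only genuine point requiring a word of care --- and the place I expect a referee to look hardest --- is the well-definedness of $H_\phi$ used implicitly in the statement: a priori $H_\phi$ depends on the chosen stabilizing relaxation. This is settled by Corollary~\ref{cor_untopf}, which in turn rests on applying Lemma~\ref{lem_boundt} in both directions to two stabilizing relaxations; so the proposition's proof and the consistency of its notation are powered by the same lemma. Assembling the pieces: (i) $F$ exists $\Rightarrow$ all relaxations locally-finite (Corollary~\ref{cor_alllf}); (ii) apply Lemma~\ref{lem_boundt} to a stabilizing relaxation to get $H_\phi\le F$; (iii) since $\phi+\Delta H_\phi$ is stable, $H_\phi$ is itself such an $F$, so it is the pointwise minimum over all admissible $F$. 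No step is technically hard; the ``main obstacle'' is purely bookkeeping --- making sure the hypotheses of Lemma~\ref{lem_boundt} ($F\ge 0$ integer-valued, $\phi+\Delta F<\tau$, relaxation locally-finite) are all in force, which they are.
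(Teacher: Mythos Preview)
Your proof is correct and follows exactly the paper's approach: the paper's proof reads in its entirety ``Straightforward by Lemma~\ref{lem_boundt}.'' Your additional remarks on Corollary~\ref{cor_alllf} and Corollary~\ref{cor_untopf} are sound but optional, since relaxability already furnishes a locally-finite stabilizing relaxation to which Lemma~\ref{lem_boundt} applies directly.
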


\begin{proof}
Straightforward by Lemma \ref{lem_boundt}.
\end{proof}

\begin{lemma}
\laber{lemma_onewavetoppling}
Consider a stable state $\phi$ and a point $v\in\Gamma.$ Then the state $T_v\phi$ is relaxable. \vv
\end{lemma}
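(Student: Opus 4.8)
The claim is that for any stable state $\phi$ on $\Gamma$ and any $v\in\Gamma$, the state $T_v\phi$ is relaxable. By Proposition~\ref{prop_relcrit}, it suffices to exhibit a function $F\colon\Gamma\to\ZZ_{\geq 0}$ with $(T_v\phi)+\Delta F<\tau$ everywhere. The plan is to take $F=\delta(v)$, the function equal to $1$ at $v$ and $0$ elsewhere, and check the inequality pointwise.

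First I would recall that $T_v\phi = \phi + \Delta\delta(v)$, so $(T_v\phi) + \Delta\delta(v) = \phi + 2\Delta\delta(v)$. This does not obviously work, so instead I would analyze $T_v\phi$ directly and look for $F$ with $(T_v\phi)+\Delta F<\tau$. The natural candidate is still $F=\delta(v)$: then $(T_v\phi)+\Delta\delta(v) = \phi + 2\Delta\delta(v)$, whose value at $v$ is $\phi(v) - 2\tau(v)$, which is $<\tau(v)$ since $\phi$ is stable (indeed $\phi(v)<\tau(v)\le 3\tau(v)$); at a neighbor $u\sim v$ it is $\phi(u)+2<\tau(u)+2$, which need not be $<\tau(u)$. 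So $\delta(v)$ is too small in the wrong way — the issue is the doubling at neighbors. The correct choice: note that after toppling $v$ once, each neighbor of $v$ may need to topple, but the resulting configuration is bounded. A cleaner route is to observe that the result of a single toppling, $T_v\phi$, is a finite perturbation of a stable state, and to build $F$ by hand so that $\phi + \Delta(\delta(v) + F)$ is stable where $F$ is supported on a large finite ball; equivalently, just run the relaxation of $T_v\phi$ by hand for finitely many steps (as in Definition~\ref{def_waves}) and take $F$ to be its toppling function. Concretely, since $\phi$ is stable, $\phi(v)\le\tau(v)-1$, hence $(T_v\phi)(v)=\phi(v)-\tau(v)\le -1<\tau(v)$ and $T_v\phi$ may only be unstable at neighbors $u\sim v$, where $(T_v\phi)(u)=\phi(u)+1\le\tau(u)$. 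So the only potentially unstable points are those $u\sim v$ with $\phi(u)=\tau(u)-1$; each such $u$ has exactly $\tau(u)$ worth of chips and needs at most one toppling. I would iterate: after toppling such a $u$, the newly endangered points are the neighbors of $u$, and the key finiteness observation is that chips only ever move ``outward'' and the total excess is bounded, so the cascade terminates in finitely many topplings — this produces a finite toppling function $F\ge 0$ with $\phi+\Delta(\delta(v)+F)<\tau$ by construction, and then $F':=\delta(v)+F$ witnesses relaxability via Proposition~\ref{prop_relcrit}.

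The main obstacle is justifying that this finite cascade actually terminates without appealing to relaxability (which is what we are proving). I expect the clean way around this is an abelian/monovariant argument: since $\phi$ is stable, the ``excess'' $\sum_u \max(0,(T_v\phi)(u)-\tau(u)+1)$ is finite and supported near $v$, and a single toppling at $v$ can make at most $|\gamma(v)|\le\tau(v)$ neighbors reach threshold, each of which, once toppled, distributes one chip to each of its neighbors — but since the original $\phi$ was \emph{strictly} below threshold, a vertex that has just received one chip and toppled is again strictly below threshold afterward and needs a second chip from \emph{two} distinct already-toppled neighbors before toppling again. One shows by induction on distance from $v$ that the number of topplings at distance $d$ is bounded, and vanishes for $d$ large (using, e.g., that the total number of chips added is $1$ and each toppling at distance $d$ requires a net inflow that must have come from distance $d-1$). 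Alternatively — and this is likely what the paper does — one simply invokes Corollary~\ref{cor_alllf} once a suitable finite $F$ is produced, so the real content is the explicit construction of $F$, which I would carry out by the outward-cascade bookkeeping sketched above.
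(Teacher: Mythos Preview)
Your approach has a genuine gap. The claim that ``the cascade terminates in finitely many topplings'' and ``vanishes for $d$ large'' is false on infinite graphs. Take $\Gamma=\ZZ^2$ with $\tau\equiv 4$ and $\phi\equiv 3$: then $T_v\phi$ triggers a wave that propagates outward forever, toppling every vertex of $\ZZ^2$ exactly once. The relaxation is locally finite (the toppling function is $\leq 1$ everywhere) but it is \emph{not} finite, so no finitely supported $F$ can witness relaxability here. Your monovariant sketch (``total number of chips added is $1$'', ``net inflow from distance $d-1$'') does not control this: $T_v$ adds no chips at all, and on the standard lattice the wavefront carries enough chips to sustain itself indefinitely.

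The paper's proof avoids all of this with a one-line trick you overlooked: nothing in Proposition~\ref{prop_relcrit} requires $F$ to have finite support. Take $F=1-\delta(v)$, i.e.\ $F\equiv 1$ except $F(v)=0$. Then
\[
T_v\phi+\Delta F=\phi+\Delta\delta(v)+\Delta\bigl(1-\delta(v)\bigr)=\phi+\Delta 1\leq\phi<\tau,
\]
the inequality $\Delta 1\leq 0$ being exactly the standing hypothesis $|\gamma(v)|\leq\tau(v)$ (Remark~\ref{rem_suph1}). Since $F\geq 0$, Proposition~\ref{prop_relcrit} applies. You were looking for $F$ supported near $v$; the right move is the opposite --- let $F$ be $1$ almost everywhere.
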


\begin{proof}
Consider a function $F(z)=1-\delta(v)$ for every $z\in \G$. Then $T_v\phi+\Delta F=\phi+\Delta\delta(v)+\Delta(1-\Delta\delta(v))=\phi+\Delta 1.$ Applying Remark~\ref{rem_suph1} we see that  $T_v\phi+\Delta F$ is stable. Thus, $T_v\phi$ is relaxable by Proposition~\ref{prop_relcrit}.
\end{proof}


\subsection{Waves, their action}
\label{sec_waves}

Sandpile waves were introduced in~\cite{ivashkevich1994waves}, see also~\cite{ktitarev2000scaling}.

\add{check how wave function is used in the main part}
\begin{definition}\laber{def_wave}
Let $v$ be a point in $\Gamma.$ The {\it wave} operator $W_v$, acting on the space of the stable states on $\Gamma$, is given by $$W_v\phi=(T_v\phi)^\circ.$$ The {\it wave-toppling} function $H^v_\phi$ of $\phi$ at $v$ is given by 
\begin{equation}
\label{eq_topplewave}
H^v_\phi=\delta(v)+H_{T_v\phi}.
\end{equation}

\begin{remark}
Note that if $v$ has $3$ grains and has a neighbor in $\G\setminus\partial\G$ with $3$ grains, then the result $W_v\phi$ is also a stable state. 
\end{remark}
Indeed, $T_v\phi$ has $-1$ grain at $v$, but the neighbor of $v$ has $4$ grains and will topple. So, eventually, we will have non-negative amount of sand at $v$.
\end{definition}
\begin{remark}\label{rem_wavet}
It is clear that $W_v\phi=\phi+\Delta H^v_\phi.$
\end{remark}

\begin{corollary}[\cite{redig}]
\laber{cor_waveval}
For any $u\in\Gamma$ the value $H^v_\phi(u)$ is either $0$ or $1$. Furthermore, $H^v_\phi(v)=1.$
\end{corollary}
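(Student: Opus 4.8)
The plan is to derive both statements directly from the Least Action Principle (Proposition~\ref{prop_leastaction}) applied to the state $T_v\phi$, reusing the comparison function already exhibited in the proof of Lemma~\ref{lemma_onewavetoppling}. First I would recall that, since $\phi$ is stable and $v\in\Gamma$, Lemma~\ref{lemma_onewavetoppling} guarantees $T_v\phi$ is relaxable, so by Corollary~\ref{cor_untopf} and Definition~\ref{def_topplingfunc} its toppling function $H_{T_v\phi}$ is well-defined, and by Definition~\ref{def_wave} we have $H^v_\phi=\delta(v)+H_{T_v\phi}$. Hence it suffices to prove that $H_{T_v\phi}(v)=0$ and $H_{T_v\phi}(u)\le 1$ for every $u\in\Gamma$.

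Next I would take the test function $F\colon\Gamma\to\ZZ$ given by $F=1-\delta(v)$, i.e. $F(v)=0$ and $F(z)=1$ for $z\ne v$; in particular $F\colon\Gamma\to\ZZ_{\ge 0}$, which is what the Least Action Principle requires. Exactly as computed in the proof of Lemma~\ref{lemma_onewavetoppling}, $T_v\phi+\Delta F=\phi+\Delta\delta(v)+\Delta\big(1-\delta(v)\big)=\phi+\Delta\mathbf 1$. Since $\mathbf 1$ is superharmonic by Remark~\ref{rem_suph1} (because $|\gamma(v)|\le\tau(v)$), we get $\phi+\Delta\mathbf 1\le\phi<\tau$, so $T_v\phi+\Delta F$ is stable. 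Proposition~\ref{prop_leastaction} then yields $H_{T_v\phi}\le F$. In particular $H_{T_v\phi}(v)\le F(v)=0$, so $H_{T_v\phi}(v)=0$; and $H_{T_v\phi}(u)\le F(u)=1$ for all other $u$. As $H_{T_v\phi}$ is non-negative and integer-valued, $H_{T_v\phi}(u)\in\{0,1\}$ for every $u\in\Gamma$.

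Adding back $\delta(v)$ gives $H^v_\phi(v)=1+H_{T_v\phi}(v)=1$, and for $u\ne v$ we have $H^v_\phi(u)=H_{T_v\phi}(u)\in\{0,1\}$, which is the claim. There is no real obstacle here: the argument is a one-line application of the Least Action Principle once the right comparison function is spotted. The only point deserving attention is that the Least Action Principle demands a non-negative test function, which is precisely why one works with $F=1-\delta(v)$ rather than with $F=-\delta(v)$; everything else is bookkeeping with the definitions of Section~\ref{sec_waves} and the preceding appendix.
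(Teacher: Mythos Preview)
Your proof is correct and is essentially the paper's own argument spelled out in full: the paper's one-line proof just says ``It follows from the proof of Lemma~\ref{lemma_onewavetoppling} that $H_{T_v\phi}\leq 1-\delta(v)$,'' and you have unpacked exactly that inequality using the same test function $F=1-\delta(v)$ and the Least Action Principle. There is no substantive difference in approach.
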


\begin{proof}
It follows from the proof of Lemma~\ref{lemma_onewavetoppling} that $H_{T_v\phi}\leq 1-\delta(v).$
\end{proof}

\begin{lemma}\laber{lem_wavetoplb}
Suppose that $\phi$ is a stable state and $v$ a point in $\Gamma$. If $\phi+\delta(v)$ is relaxable and not stable, then the toppling function for the wave from $v$ is less or equal than the toppling function for a relaxation of $\phi+\delta(v)$, i.e. $$H^v_\phi(w) \leq H_{\phi+\delta(v)}(w) \ , \forall w\in\G\setminus\partial\G.$$
\end{lemma}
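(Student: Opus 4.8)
The plan is to compare the wave-toppling from $v$ with an arbitrary relaxation of $\phi+\delta(v)$ by exhibiting the wave-toppling function as a toppling function bounded above by that of the other relaxation, using the Least Action Principle (Proposition~\ref{prop_leastaction}) together with Lemma~\ref{lem_boundt}. First I would recall that $H^v_\phi = \delta(v) + H_{T_v\phi}$ and that $W_v\phi = (T_v\phi)^\circ$ is stable (this is the relaxation of $T_v\phi$, which is relaxable by Lemma~\ref{lemma_onewavetoppling}). Now let $\psi_\bullet$ be any relaxation of $\phi+\delta(v)$; since $\phi+\delta(v)$ is assumed relaxable, by Lemma~\ref{lem_relcritlf} (and Corollary~\ref{cor_alllf}) all its relaxations are locally finite, so $H := H_{\phi+\delta(v)}$ is a well-defined finite nonnegative function and $\phi + \delta(v) + \Delta H$ is stable by definition of relaxation.

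The key observation is that $H^v_\phi$ should itself be realizable as a toppling function arising from relaxing $T_v\phi$, so I would set $F = H$ and check the hypothesis of Lemma~\ref{lem_boundt} for the state $T_v\phi$. We have
\[
T_v\phi + \Delta F = \phi + \Delta\delta(v) + \Delta H = (\phi + \delta(v)) + \Delta H - \text{(lower order)},
\]
so I must be slightly careful: $T_v\phi = \phi + \Delta\delta(v)$, hence $T_v\phi + \Delta H = \phi + \Delta\delta(v) + \Delta H$, which equals $(\phi+\delta(v)+\Delta H) + \Delta\delta(v) - \delta(v)$. Since $\phi+\delta(v)+\Delta H < \tau$ is stable and the wave-toppling value at $v$ is $1$ (Corollary~\ref{cor_waveval}), the correct move is instead to take $F = H - \delta(v)$ when $H(v)\geq 1$: then $T_v\phi + \Delta F = \phi + \delta(v) + \Delta H$ is stable, and $F\geq 0$ precisely because $H(v)\geq 1$, which holds since $\phi+\delta(v)$ is not stable forces at least one toppling, and in fact $v$ must topple (as its neighbor-free instability can only be resolved through $v$; more carefully, $H(v)\geq 1$ follows from the standard argument that the unstable vertex in $\phi+\delta(v)$ is $v$ itself since $\phi$ was stable). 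Then Lemma~\ref{lem_boundt} applied to $T_v\phi$ with this $F$ gives $H_{T_v\phi}\leq H - \delta(v)$, hence $H^v_\phi = \delta(v) + H_{T_v\phi} \leq \delta(v) + H - \delta(v) = H = H_{\phi+\delta(v)}$, which is exactly the claimed inequality on $\G\setminus\partial\G$.

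The main obstacle I anticipate is justifying that $H_{\phi+\delta(v)}(v)\geq 1$, i.e. that the vertex $v$ genuinely topples in any relaxation of $\phi+\delta(v)$; this is where the hypotheses ``$\phi$ stable'' and ``$\phi+\delta(v)$ not stable'' are used — since adding one grain at $v$ is the only change and $\phi$ was already stable everywhere, the only vertex that can be unstable in $\phi+\delta(v)$ is $v$, and by Corollary~\ref{cor_untopf} the toppling function is independent of the relaxation, so $H_{\phi+\delta(v)}(v)\geq 1$. Once that is in hand, the rest is the mechanical application of Lemma~\ref{lem_boundt} and the identity $H^v_\phi = \delta(v) + H_{T_v\phi}$. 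An alternative, perhaps cleaner, route avoiding the $\delta(v)$ bookkeeping: observe that $F := H_{\phi+\delta(v)}$ satisfies $T_v\phi + \Delta F \leq \phi + \delta(v) + \Delta F < \tau$ pointwise except possibly we lose at $v$; but since $H^v_\phi(v) = 1 \leq F(v)$ anyway, one can patch the argument at $v$ directly. I would present whichever of these is shortest in the final writeup, but the Least-Action-Principle comparison via Lemma~\ref{lem_boundt} is the backbone either way.
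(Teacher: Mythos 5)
Your proposal is correct and follows essentially the same route as the paper's proof: observe that $v$ is the only unstable vertex of $\phi+\delta(v)$ (so it topples first and $H_{\phi+\delta(v)}(v)\geq 1$), take $F=H_{\phi+\delta(v)}-\delta(v)\geq 0$, apply Lemma~\ref{lem_boundt} to $T_v\phi$, and add back $\delta(v)$ via $H^v_\phi=\delta(v)+H_{T_v\phi}$. One tiny algebra slip: $T_v\phi+\Delta F=\phi+\Delta H_{\phi+\delta(v)}=(\phi+\delta(v))^\circ-\delta(v)$, not $(\phi+\delta(v))^\circ$ itself, but this is still $<\tau$, so the application of Lemma~\ref{lem_boundt} and the conclusion are unaffected.
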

\begin{proof}
It is clear that $(\phi+\delta(v))(w)=\phi(w)<\tau(w)$ for all $w\neq v$ and $(\phi+\delta(v))(v)=\tau(v).$ Therefore, $T_v$ is the first toppling in any non-trivial relaxation sequence for $\phi+\delta(v)$ and $H_{\phi+\delta(v)}(v)\geq 1.$ In particular, the function $H_{\phi+\delta(v)}-\delta(v)$ is non-negative and $H_{T_v\phi}\leq H_{\phi+\delta(v)}-\delta(v)$  by Lemma \ref{lem_boundt} since $$T_v\phi+\Delta\big(H_{\phi+\delta(v)}-\delta(v)\big)=\phi+\Delta\delta(v)+\Delta\big(H_{\phi+\delta(v)}-\delta(v)\big)=\phi+ \Delta H_{\phi+\delta(v)}   =\big(\phi+\delta(v)\big)^\circ-\delta(v)<\tau.$$
\end{proof}

\begin{definition}
\laber{def_partial}
Let $\phi$ be a relaxable state, $H_\phi$ be its toppling function. Let $0\leq F\leq H_\phi$. The state $\phi+\Delta F$ is called a {\it partial relaxation} of $\phi$.
\end{definition}

\begin{lemma}\laber{lem_subtoppl}
Consider a relaxable state $\phi$ and an integer-valued function $F$ on $\Gamma$ such that  $0 \leq F\leq H_\phi.$ Then the state $\phi+\Delta F$ is relaxable and $$H_{\phi+\Delta F}=H_\phi - F.$$
\end{lemma}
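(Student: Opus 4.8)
\textbf{Proof plan for Lemma~\ref{lem_subtoppl}.}
The plan is to prove the two inequalities $H_{\phi+\Delta F}\leq H_\phi-F$ and $H_{\phi+\Delta F}\geq H_\phi-F$ separately, after first checking that $\phi+\Delta F$ is relaxable at all. For relaxability, note that $G:=H_\phi-F$ is a non-negative integer-valued function on $\Gamma$ (since $0\leq F\leq H_\phi$), and $$(\phi+\Delta F)+\Delta G=\phi+\Delta H_\phi=\phi^\circ<\tau$$ because $\phi$ is relaxable with toppling function $H_\phi$. Hence by Proposition~\ref{prop_relcrit} the state $\phi+\Delta F$ is relaxable, and moreover by the Least Action Principle (Proposition~\ref{prop_leastaction}) applied with this $G$ we already get $H_{\phi+\Delta F}\leq H_\phi-F$.

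For the reverse inequality $H_{\phi+\Delta F}\geq H_\phi-F$, the idea is that a stabilizing relaxation of $\phi$ can be arranged to pass through the intermediate state $\phi+\Delta F$. Concretely, $F$ itself is (pointwise) dominated by $H_\phi$, so by the same style of argument used in Lemma~\ref{lem_mergerel} there is a relaxation sequence of $\phi$ whose toppling function is exactly $F$ — one performs topplings only as long as the running toppling count stays $\leq F$, and such a sequence cannot get stuck before reaching $F$ since any vertex $v$ with running count $<F(v)$ still has enough grains to topple (this is where one uses that $H_\phi$ is a genuine toppling function, i.e. that each vertex $v$ with $F(v)>0$ was legal when first toppled in an appropriate ordering). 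Completing this relaxation of $\phi$ to a stabilizing one, the tail of topplings after reaching $\phi+\Delta F$ is a stabilizing relaxation of $\phi+\Delta F$ with toppling function $H_\phi-F$; since $H_{\phi+\Delta F}$ is the toppling function of \emph{any} stabilizing relaxation of $\phi+\Delta F$ (Corollary~\ref{cor_untopf}), we conclude $H_{\phi+\Delta F}=H_\phi-F$.

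Alternatively, and perhaps more cleanly, the reverse inequality can be obtained by applying Lemma~\ref{lem_boundt} in the other direction: take any stabilizing relaxation $\psi_\bullet$ of $\phi+\Delta F$, so $H_{\phi+\Delta F}$ is its toppling function, and consider the function $\tilde F:=H_{\phi+\Delta F}+F$. Then $$\phi+\Delta\tilde F=(\phi+\Delta F)+\Delta H_{\phi+\Delta F}=(\phi+\Delta F)^\circ<\tau,$$ so $\tilde F$ witnesses relaxability of $\phi$ and, by the Least Action Principle for $\phi$, $H_\phi\leq \tilde F=H_{\phi+\Delta F}+F$, i.e. $H_\phi-F\leq H_{\phi+\Delta F}$. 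Combined with the first inequality this gives equality.

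The main obstacle I expect is purely bookkeeping: making sure that $H_\phi-F$ and $F$ are legitimate $\mathbb{Z}_{\geq 0}$-valued functions so that Proposition~\ref{prop_relcrit} and Lemma~\ref{lem_boundt} apply verbatim, and being careful that ``$H_\phi$ is the toppling function of $\phi$'' is well-defined (which is exactly Corollary~\ref{cor_untopf}). The second, Least-Action-Principle-only argument avoids the delicate reordering-of-topplings argument entirely, so I would present that one; the only genuine content is the identity $(\phi+\Delta F)+\Delta H_{\phi+\Delta F}=(\phi+\Delta F)^\circ$ together with two invocations of Proposition~\ref{prop_leastaction}, one for $\phi$ and one for $\phi+\Delta F$.
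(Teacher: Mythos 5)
Your preferred argument is exactly the paper's proof: relaxability of $\phi+\Delta F$ and the inequality $H_{\phi+\Delta F}\leq H_\phi-F$ via Proposition~\ref{prop_relcrit} and Lemma~\ref{lem_boundt} with witness $H_\phi-F\geq 0$, and the reverse inequality by applying Lemma~\ref{lem_boundt} to $\phi$ with witness $H_{\phi+\Delta F}+F\geq 0$, using $(\phi+\Delta F)+\Delta H_{\phi+\Delta F}=(\phi+\Delta F)^\circ<\tau$. You were right to discard the first, reordering-based sketch (whose claim that any $0\leq F\leq H_\phi$ is realized by a legal toppling sequence would need a separate argument) in favor of the two-invocation Least Action argument, which is precisely what the paper does.
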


\begin{proof}
By Proposition \ref{prop_relcrit} the state $\phi+\Delta F$ is relaxable because  $$\phi+\Delta F+\Delta(H_\phi-F)=\phi+\Delta H_\phi=\phi^\circ<\tau$$ and $H_\phi-F$ is non-negative. In particular, $H_\phi-F \geq H_{\phi+\Delta F}$ by Lemma \ref{lem_boundt}. On the other hand, since $H_{\phi+\Delta F}+F\geq 0$, we have $$\phi+\Delta(H_{\phi+\Delta F}+F)=\phi+\Delta F+\Delta H_{\phi+\Delta F}=(\phi+\Delta F)^\circ<\tau.$$ Applying again Lemma \ref{lem_boundt}, we have $H_\phi\leq H_{\phi+\Delta F}+F.$
\end{proof}

\begin{proposition}
\laber{prop_decompositioninwaves}
Let $\phi$ be a stable state and $v$ be a point in $\Gamma.$ Suppose that $\phi+\delta(v)$ is relaxable. Then the relaxation of $\phi+\delta(v)$ can be decomposed into sending $n$ waves from $v$, i.e.
$$(\phi+\delta(v))^\circ=\delta(v)+W^n_v\phi,$$ where $n=H_{\phi+\delta(v)}(v)$ and $W^n_v(\phi) = W_v(W_v(\dots (\phi))\dots)$, $n$-th power of $W_v$. On the level of toppling functions, this gives $$H_{\phi+\delta(v)}=\sum_{k=0}^{n-1}H^v_{(W_v^k\phi)}.$$ 

\end{proposition}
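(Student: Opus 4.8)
The plan is to induct on $n=H_{\phi+\delta(v)}(v)$, peeling off one wave at a time. For the base case $n=0$ I would first observe that $\phi+\delta(v)$ must already be stable: since $\phi$ is stable, $(\phi+\delta(v))(w)=\phi(w)<\tau(w)$ for every $w\ne v$, so $v$ is the only site that could be unstable; were it unstable, the first toppling in any stabilizing relaxation of $\phi+\delta(v)$ (such a relaxation is nontrivial, as an unstable state is not its own result) would have to be at $v$, forcing $H_{\phi+\delta(v)}(v)\ge 1$. Hence $\phi+\delta(v)$ is stable, $(\phi+\delta(v))^\circ=\phi+\delta(v)=\delta(v)+W_v^0\phi$, and the toppling-function identity reads $H_{\phi+\delta(v)}=0$ (the empty sum).

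For the inductive step $n\ge 1$ (so $\phi+\delta(v)$ is not stable, by the base-case reasoning) the key object is the intermediate state $\psi:=(\phi+\delta(v))+\Delta H^v_\phi$. Since $\phi+\delta(v)$ is relaxable and not stable, Lemma~\ref{lem_wavetoplb} gives $0\le H^v_\phi\le H_{\phi+\delta(v)}$, so $\psi$ is a partial relaxation of $\phi+\delta(v)$ in the sense of Definition~\ref{def_partial}. By Lemma~\ref{lem_subtoppl}, $\psi$ is relaxable with $H_\psi=H_{\phi+\delta(v)}-H^v_\phi$, and therefore $\psi^\circ=\psi+\Delta H_\psi=(\phi+\delta(v))+\Delta H_{\phi+\delta(v)}=(\phi+\delta(v))^\circ$. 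At the same time, by Remark~\ref{rem_wavet},
$$\psi=\delta(v)+\bigl(\phi+\Delta H^v_\phi\bigr)=\delta(v)+W_v\phi.$$

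Now $W_v\phi$ is a stable state (the result of a relaxation), and $W_v\phi+\delta(v)=\psi$ is relaxable with $H_{W_v\phi+\delta(v)}(v)=H_\psi(v)=n-H^v_\phi(v)=n-1$, using $H^v_\phi(v)=1$ from Corollary~\ref{cor_waveval}. Applying the inductive hypothesis to $W_v\phi$ gives $(W_v\phi+\delta(v))^\circ=\delta(v)+W_v^{n-1}(W_v\phi)=\delta(v)+W_v^{n}\phi$ and $H_{W_v\phi+\delta(v)}=\sum_{k=0}^{n-2}H^v_{W_v^k(W_v\phi)}=\sum_{j=1}^{n-1}H^v_{W_v^{j}\phi}$. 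Combining $\psi^\circ=(\phi+\delta(v))^\circ$ with $\psi=W_v\phi+\delta(v)$ yields the first displayed identity, and combining $H_{\phi+\delta(v)}=H^v_\phi+H_\psi$ with the preceding sum (noting $H^v_\phi=H^v_{W_v^0\phi}$) yields $H_{\phi+\delta(v)}=\sum_{k=0}^{n-1}H^v_{W_v^k\phi}$.

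The only step that needs genuine care is recognizing $\psi$ as a partial relaxation, i.e. the inequality $H^v_\phi\le H_{\phi+\delta(v)}$, which is precisely Lemma~\ref{lem_wavetoplb}; once it is in hand, everything else is the formal calculus of toppling functions supplied by Lemma~\ref{lem_subtoppl}, together with the observation that each peeled-off wave drops $H_{\phi+\delta(v)}(v)$ by exactly one ($=H^v_\phi(v)$), so the induction terminates.
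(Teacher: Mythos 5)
Your proof is correct and follows essentially the same route as the paper: both peel off one wave at a time via the identity $H_{\phi+\delta(v)}=H^v_\phi+H_{W_v\phi+\delta(v)}$, obtained by combining Lemma~\ref{lem_wavetoplb} (so that $\phi+\delta(v)+\Delta H^v_\phi$ is a partial relaxation) with Lemma~\ref{lem_subtoppl}, and both count the waves using $H^v_\phi(v)=1$ from Corollary~\ref{cor_waveval}. You merely organize the paper's ``iterate this procedure'' as a formal induction on $n$, with the base case $n=0$ spelled out explicitly.
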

Added parenthesis in the subscript are for better readability only.
\begin{proof}
Combining Lemmata \ref{lem_wavetoplb} and \ref{lem_subtoppl} we have $$H_{\phi+\delta(v)}=H^v_\phi+H_{(W_v\phi+\delta(v))}.$$
If the state $W_v\phi+\delta(v)$ is not stable, then we can apply the same lemmata again. We complete the proof by iteration of this procedure and using Corollary \ref{cor_waveval} (each wave has one toppling at $v$, therefore we have $n$ waves).
\end{proof}


\begin{lemma}
\laber{lemma_wavesindependent}
If $\phi$ is a stable state and $v_1,\dots,v_m$ are vertices of $\Gamma$ such that $v_i$ is adjacent to $v_{i+1}$ and $\phi(v_i)=\tau(v_i)-1$ for all $i=1,2,\dots,m$, then $H^{v_1}_\phi=H^{v_m}_\phi$. 
\end{lemma}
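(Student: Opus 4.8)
The plan is to induct along the chain, so that it suffices to treat the case $m=2$: if we prove that $H^v_\phi=H^w_\phi$ whenever $v\sim w$ and $\phi(v)=\tau(v)-1$, $\phi(w)=\tau(w)-1$, then applying this to the consecutive pairs $(v_1,v_2),(v_2,v_3),\dots,(v_{m-1},v_m)$ gives $H^{v_1}_\phi=H^{v_2}_\phi=\dots=H^{v_m}_\phi$, hence in particular $H^{v_1}_\phi=H^{v_m}_\phi$. (Note $v_i\neq v_{i+1}$, since $v_i\notin\gamma(v_i)$.)

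So fix such a pair $v,w$. First I would observe that in the state $T_v\phi=\phi+\Delta\delta(v)$ the vertex $w$ carries $\phi(w)+1=\tau(w)$ grains and is therefore unstable; since $T_v\phi$ is relaxable by Lemma~\ref{lemma_onewavetoppling}, every stabilizing relaxation of $T_v\phi$ must topple $w$ at least once, so $H_{T_v\phi}(w)\geq 1$. On the other hand $H^v_\phi=\delta(v)+H_{T_v\phi}$ by \eqref{eq_topplewave}, and Corollary~\ref{cor_waveval} says $H^v_\phi$ takes only the values $0$ and $1$; since $w\neq v$ this forces $H_{T_v\phi}(w)=1$ (and, similarly, $H_{T_v\phi}(v)=0$). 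By the symmetric argument applied to $T_w\phi$, we get $H_{T_w\phi}(v)=1$.

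Next I would apply Lemma~\ref{lem_subtoppl} to the relaxable state $T_v\phi$ with the function $F=\delta(w)$: since $0\leq\delta(w)\leq H_{T_v\phi}$ by the previous step, the state $T_v\phi+\Delta\delta(w)$ is relaxable and its toppling function equals $H_{T_v\phi}-\delta(w)$. Symmetrically, $T_w\phi+\Delta\delta(v)$ is relaxable with toppling function $H_{T_w\phi}-\delta(v)$. But these two states are literally the same, $T_v\phi+\Delta\delta(w)=\phi+\Delta\delta(v)+\Delta\delta(w)=T_w\phi+\Delta\delta(v)$, so by uniqueness of the toppling function of a relaxable state (Corollary~\ref{cor_untopf}) we obtain $H_{T_v\phi}-\delta(w)=H_{T_w\phi}-\delta(v)$, i.e. $\delta(v)+H_{T_v\phi}=\delta(w)+H_{T_w\phi}$, which is exactly $H^v_\phi=H^w_\phi$.

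The only genuinely new point is the first one --- that a site sitting at threshold minus one and adjacent to the toppled vertex is itself forced to topple during the wave, and (by Corollary~\ref{cor_waveval}) exactly once --- after which abelianness does the rest: the state $\phi+\Delta\delta(v)+\Delta\delta(w)$ is reached from $\phi$ both by toppling $v$ then relaxing-to-then-topple $w$, and by toppling $w$ then $v$, so comparing the two tallies of topplings yields the claim. Everything else is a routine bookkeeping with Lemmas~\ref{lemma_onewavetoppling} and \ref{lem_subtoppl} and Corollaries~\ref{cor_waveval} and \ref{cor_untopf}.
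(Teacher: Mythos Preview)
Your proof is correct and follows exactly the approach the paper takes: reduce to the case $m=2$ by chaining along consecutive pairs, and then verify that case directly. The paper's own proof simply states ``It follows from the simplest case $m=2$, for which it is just a computation,'' so you have supplied precisely the computation the paper omits, cleanly packaged via Lemmas~\ref{lemma_onewavetoppling} and~\ref{lem_subtoppl} and Corollaries~\ref{cor_waveval} and~\ref{cor_untopf}.
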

\begin{proof}
It follows from the simplest case $m=2$, for which it is just a computation.
\end{proof}

\begin{definition}
\laber{def_waveterritory} 
In a given state $\phi$, a {\it territory} is a maximal by inclusion connected component of the vertices $v$ such that $\phi(v)=\tau(v)-1$. Given a territory $\TT$, we denote by $W_{\TT}$ the wave which is sent from a point in $\TT$ (by Lemma~\ref{lemma_wavesindependent} it does not matter from which one).
\end{definition}

Basically, Corollary~\ref{cor_waveval} tells us that a wave from $v$ increases the toppling function exactly by one in the territory to which $v$ belongs to, and by at most one in all other vertices.

\begin{proposition}\laber{prop_waveleast}
Let $\phi$ be a stable stable, $v$ be a point in $\Gamma$, and $F\colon\Gamma\rightarrow\mathbb{Z}_{\geq 0}$ be a function such that $F(v)\geq 1$ and $\phi+\Delta F$ is stable. Then $F\geq H^v_\phi.$ 
\end{proposition}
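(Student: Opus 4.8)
The plan is to reduce the statement to the Least Action Principle (Proposition~\ref{prop_leastaction}) applied to the state $\phi+\delta(v)$, and then to transfer the bound to the wave-toppling function $H^v_\phi$ via the wave decomposition. First I would observe that since $F(v)\geq 1$, we may write $F = \delta(v) + (F-\delta(v))$ where $F-\delta(v)\geq 0$ on all of $\Gamma$. I want to feed $F-\delta(v)$ (or rather $F$ itself, suitably interpreted) into a Least Action type argument for the state $\phi + \delta(v)$. Indeed, $(\phi+\delta(v)) + \Delta(F-\delta(v)) = \phi + \delta(v) + \Delta F - \Delta\delta(v) = \phi + \Delta F$, which is stable by hypothesis. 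Hence the function $F-\delta(v)$ is a non-negative stabilizing correction for $\phi+\delta(v)$.

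Now there is a case distinction. If $\phi+\delta(v)$ is stable, then it is trivially relaxable and $H_{\phi+\delta(v)}\equiv 0$; but in that case $W_v\phi$ does nothing interesting — more precisely, one checks directly from Definition~\ref{def_wave} that $H^v_\phi = \delta(v)$ (the toppling at $v$ produces $-1$ grain at $v$ but the neighbour of $v$ with $\tau-1$ grains is needed for the wave; if no such neighbour exists the wave is not defined, and if it does exist then $\phi+\delta(v)$ is not stable, contradiction) — so $F\geq \delta(v) = H^v_\phi$ holds since $F(v)\geq 1$ and $F\geq 0$. Otherwise $\phi+\delta(v)$ is not stable, hence relaxable (it admits the non-negative stabilizing correction $F-\delta(v)$, so Proposition~\ref{prop_relcrit} applies). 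By the Least Action Principle applied to $\phi+\delta(v)$ with the correction $F-\delta(v)$, we get $H_{\phi+\delta(v)} \leq F-\delta(v)$. On the other hand, Lemma~\ref{lem_wavetoplb} gives $H^v_\phi(w) \leq H_{\phi+\delta(v)}(w)$ for all $w\in\Gamma\setminus\partial\Gamma$ (and at $v$ both equal at least $1$, with $H^v_\phi(v)=1$ by Corollary~\ref{cor_waveval}). Chaining these, $H^v_\phi \leq H_{\phi+\delta(v)} \leq F - \delta(v) \leq F$ away from $v$, and at $v$ we have $H^v_\phi(v) = 1 \leq F(v)$; combining, $H^v_\phi \leq F$ everywhere.

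The only real subtlety is bookkeeping at the vertex $v$ itself, where $\delta(v)$ appears, and making sure the inequality $H^v_\phi(v) = 1 \leq F(v)$ is used in place of the $w\neq v$ reasoning; this is exactly what the hypothesis $F(v)\geq 1$ is for, and it is also why Corollary~\ref{cor_waveval} ($H^v_\phi(v)=1$) is invoked. I expect the main obstacle, such as it is, to be verifying that $\phi+\delta(v)$ is genuinely relaxable before invoking Proposition~\ref{prop_leastaction} — but this is immediate from Proposition~\ref{prop_relcrit} once we exhibit $F-\delta(v)\geq 0$ with $(\phi+\delta(v))+\Delta(F-\delta(v))$ stable, which is just the rewriting done above. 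Everything else is a direct concatenation of results already proved in the appendix.
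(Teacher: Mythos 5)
There is a genuine gap, and it sits exactly at the algebraic identity your whole reduction rests on. You claim $(\phi+\delta(v)) + \Delta(F-\delta(v)) = \phi + \Delta F$, but this conflates adding a grain at $v$ with toppling $v$: $\delta(v)\neq\Delta\delta(v)$, so the left-hand side equals $\phi+\Delta F+\delta(v)-\Delta\delta(v)$, which at $v$ exceeds $\phi(v)+\Delta F(v)$ by $1+\tau(v)$ and is therefore not stable in general. Consequently the Least Action Principle (Proposition~\ref{prop_leastaction}) applied to $\phi+\delta(v)$ with correction $F-\delta(v)$ is not justified, and in fact the intermediate inequality $H_{\phi+\delta(v)}\leq F-\delta(v)$ is simply false in general: by the wave decomposition (Proposition~\ref{prop_decompositioninwaves}), stabilizing $\phi+\delta(v)$ may require $n=H_{\phi+\delta(v)}(v)\geq 2$ waves, while the hypothesis only provides ``one wave's worth'' of $F$. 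A concrete failure is $F=H^v_\phi$ itself, which satisfies all hypotheses ($F\geq 0$, $F(v)=1$ by Corollary~\ref{cor_waveval}, and $\phi+\Delta F=W_v\phi$ stable), yet $H_{\phi+\delta(v)}(v)=n$ can be strictly larger than $F(v)-0=1$. So the chain $H^v_\phi\leq H_{\phi+\delta(v)}\leq F-\delta(v)$ breaks at its second link; only the first link (Lemma~\ref{lem_wavetoplb}) is sound.

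The repair is to run your argument on the toppled state rather than the seeded one, which is what the paper intends by ``similar to Lemma~\ref{lem_boundt}''. Since $H^v_\phi=\delta(v)+H_{T_v\phi}$ by definition \eqref{eq_topplewave}, it suffices to bound $H_{T_v\phi}$ by $F-\delta(v)$. Here the algebra does work: $F-\delta(v)\geq 0$ because $F(v)\geq 1$, and
\begin{equation*}
T_v\phi+\Delta\bigl(F-\delta(v)\bigr)=\phi+\Delta\delta(v)+\Delta F-\Delta\delta(v)=\phi+\Delta F<\tau .
\end{equation*}
Hence every relaxation of $T_v\phi$ is locally finite and Lemma~\ref{lem_boundt} (equivalently, Proposition~\ref{prop_leastaction} for $T_v\phi$, which is relaxable by Lemma~\ref{lemma_onewavetoppling}) gives $H_{T_v\phi}\leq F-\delta(v)$, so $H^v_\phi=\delta(v)+H_{T_v\phi}\leq F$ everywhere, including at $v$. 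Your bookkeeping at $v$ and the case distinction about $\phi+\delta(v)$ being stable then become unnecessary; the detour through $\phi+\delta(v)$ should be dropped altogether.
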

\begin{proof}
Similar to Lemma~\ref{lem_boundt}.
\end{proof}
\begin{corollary}[Least Action Principle for waves, cf\cite{FLP}]
\laber{cor_leastforwaves}\add{writi it to the beginning?}
Suppose that a state $\phi$ is stable. We send $n$ waves from a vertex $v$. Let $H = \sum_{k=0}^{n-1}H^v_{(W^k\phi)}$ be the toppling function of this process. Let $F$ be a function such that $\phi+\Delta F\geq 0, F(w)\geq 0$ for all $w$, and $F(v)\geq n$. Then $F(w)\geq H(w)$ for all $w$.
\end{corollary}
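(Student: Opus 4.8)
The plan is to reduce the statement about the $n$-fold wave process to a single application of Proposition~\ref{prop_decompositioninwaves}, which identifies the composite toppling function $H$ with the toppling function of the relaxation of $\phi+\delta(v)$ — but with a caveat: $\phi+\delta(v)$ need not be relaxable in general, whereas here we are only given a function $F$ with $\phi+\Delta F\geq 0$ and $F(v)\geq n$, and the wave process runs for a \emph{prescribed} finite number $n$ of steps. So first I would set up the finite comparison directly, mimicking the proof of Lemma~\ref{lem_boundt}. Let $H_k=\sum_{j=0}^{k-1}H^v_{(W^j\phi)}$ be the partial toppling function after $k$ waves, $0\le k\le n$, so $H=H_n$ and $H_0\equiv 0$; each $W^j\phi$ is stable by the remark after Definition~\ref{def_wave} (at every stage $v$ and a neighbour of $v$ carry $\tau-1=3$ grains, since the process is well-defined), and $H^v_{(W^j\phi)}(v)=1$ by Corollary~\ref{cor_waveval}.

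Next I would prove by induction on $k$ that $H_k\le F$ for all $0\le k\le n$, which for $k=n$ is the desired conclusion. The base case $k=0$ is trivial. For the inductive step, suppose $H_k\le F$; I want $H_{k+1}=H_k+H^v_{(W^k\phi)}\le F$. Apply Proposition~\ref{prop_waveleast} to the stable state $W^k\phi$, the point $v$, and the candidate function $F-H_k$: this function is non-negative by the inductive hypothesis; its value at $v$ is $F(v)-H_k(v)=F(v)-k\ge n-k\ge 1$ since each of the $k$ previous waves toppled $v$ exactly once; and $W^k\phi+\Delta(F-H_k)=(\phi+\Delta H_k)+\Delta(F-H_k)=\phi+\Delta F$, which is $\ge 0$ and $\le \tau-1$ — wait, Proposition~\ref{prop_waveleast} requires $\phi+\Delta F$ \emph{stable}. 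Here we only know $\phi+\Delta F\ge 0$. So the cleaner route is to bound each $H^v_{(W^k\phi)}$ not against $F-H_k$ directly but to carry a single global inequality $H_n\le F$ via a Lemma~\ref{lem_boundt}-style argument applied to the relaxation sequence obtained by concatenating the $n$ wave relaxations: that concatenation is a genuine relaxation sequence of $T_v^{n}$-type starting from $\phi$ with $n$ "injections" at $v$, and one checks inductively on the steps of this concatenated sequence that the running toppling count never reaches $F$ at any vertex, using exactly the computation in Lemma~\ref{lem_boundt} together with the fact that the $\ell$-th time $v$ is hit the running count at $v$ is at most $\ell-1<n\le F(v)$. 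The stability hypothesis is not needed because the inequality $\phi+\Delta F\ge 0$ is what powers the key line $\tau(v_m)\le \phi_0(v_m)+\Delta F(v_m)+\tau(v_m)(F(v_m)-H_{m-1}(v_m))$ in the same way.

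The main obstacle, then, is bookkeeping rather than ideas: one must phrase the $n$-wave process as a single admissible relaxation-with-sources sequence so that the Lemma~\ref{lem_boundt} induction applies verbatim, and verify that at the moment of the $\ell$-th toppling at $v$ the accumulated toppling function at $v$ is $\ell-1$, which is $<n\le F(v)$, while at every other vertex the Lemma~\ref{lem_boundt} inequality gives the strict bound as before. Once that is in place, evaluating at step $\sum_{k}|H^v_{(W^k\phi)}|$ yields $H=H_n\le F$ pointwise, which is the claim. Alternatively, if one prefers to stay within the wave formalism, the inductive argument of the previous paragraph works provided one replaces the appeal to Proposition~\ref{prop_waveleast} by its proof (which is "similar to Lemma~\ref{lem_boundt}" and only uses $\phi+\Delta F\ge 0$, not stability of $\phi+\Delta F$), applied at each stage to $W^k\phi$ and $F-H_k$; I expect either write-up to be short.
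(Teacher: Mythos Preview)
Your first, inductive approach --- apply Proposition~\ref{prop_waveleast} to the stable state $W_v^k\phi$ with the candidate $F-H_k$, peeling off one wave at a time --- is exactly the paper's proof. The discrepancy you flag between the stated hypothesis ``$\phi+\Delta F\geq 0$'' and what Proposition~\ref{prop_waveleast} requires is real, but it is a typo in the statement of the corollary: the intended hypothesis is that $\phi+\Delta F$ is \emph{stable} (i.e.\ $<\tau$), matching Proposition~\ref{prop_waveleast} and the use made of it in Corollary~\ref{cor_wavegp}. With that correction your inductive argument goes through verbatim and there is nothing more to do.

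Your attempted workaround, however, contains an error. You assert that ``the inequality $\phi+\Delta F\ge 0$ is what powers the key line'' of Lemma~\ref{lem_boundt}, but re-read that proof: from
\[
\tau(v_m)\leq \phi_0(v_m)+\Delta F(v_m)+\tau(v_m)\big(F(v_m)-H_{m-1}(v_m)\big)
\]
one needs $\phi_0(v_m)+\Delta F(v_m)<\tau(v_m)$ to conclude $F(v_m)-H_{m-1}(v_m)\geq 1$; a \emph{lower} bound on $\phi_0+\Delta F$ is of no help here. In fact, with only $\phi+\Delta F\geq 0$ the corollary is false: on $\Gamma=\ZZ$ with $\tau\equiv 2$ and $\phi\equiv 1$, a single wave from $0$ topples every vertex, yet the function $F$ with $F(0)=F(\pm 1)=1$ and $F\equiv 0$ elsewhere satisfies $F\geq 0$, $F(0)\geq 1$, and $\phi+\Delta F\geq 0$. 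So drop the second route; simply note the typo and use your first argument.
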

\begin{proof}
We apply Proposition~\ref{prop_waveleast} $n$ times, each time decreasing $F$ by $H^v_{W^k(\phi)}$ for $k=0,1,\dots, n-1$.
\end{proof}

\bibliography{../../sand/sandbib}
\bibliographystyle{abbrv}


\end{document}